\documentclass[a4paper]{amsart}

\usepackage[utf8]{inputenc}
\usepackage[english]{babel}

\usepackage{amsthm}
\usepackage{amsmath}
\usepackage{amssymb}
\usepackage{latexsym}
\usepackage{enumerate}
\usepackage{graphicx}
\usepackage{amscd}
\usepackage{epsfig} 
\usepackage{pgf}
\usepackage{tikz-cd}
\usepackage{float}
\usepackage{dsfont}
\usepackage{times}
\usepackage{enumerate}
\listfiles

\usepackage{mathrsfs}
\usepackage{amsfonts}
\usepackage{amssymb}
\usepackage{hyperref}

\newcommand{\omn}{\omega^{n-1}}

\newcommand{\ov}{\overline}

\newcommand{\DG}{\sqrt{\det G(t,\ov{\theta})}}
\newcommand{\DGr}{\sqrt{\det G(r,\ov{\theta})}}

\newcommand{\norm}[1]{\left\lVert#1\right\rVert}
\newcommand{\modul}[1]{\left\lvert#1\right\rvert}
\newcommand{\ep}[1]{\left<#1\right>}

\newcommand{\MW}{M_\omega^n}

\newcommand{\brm}{{\rm B}}
\newcommand{\srm}{{\rm S}}
\newcommand{\vrm}{{\rm V}}

\newcommand{\hrm}{{\rm H}}
\newcommand{\drm}{{\rm D}}

\newcommand{\SN}{\mathbb{S}^{n-1}}

\newcommand{\R}{\mathbb{R}}
\newcommand{\N}{\mathbb{N}}
\renewcommand{\S}{\mathbb{S}}

\newcommand{\E}{\mathbb{E}}

\newcommand{\bu}{\bar{u}}

\newcommand{\CA}{\mathcal{A}}

\newcommand{\inj}{{{\rm inj}\,}}

\newtheorem*{theorem*}{Theorem}

\renewcommand{\thetheoremName}

\newtheorem{theorem}{Theorem}[section]
\newtheorem{lemma}[theorem]{Lemma}
\newtheorem{proposition}[theorem]{Proposition}
\newtheorem{corollary}[theorem]{Corollary}

\theoremstyle{definition}
\newtheorem{definition}[theorem]{Definition}

\newtheorem{examples}[theorem]{Examples}

\newtheorem{remark}[theorem]{Remark}

\newtheorem{theoremA}{Theorem}

\numberwithin{equation}{section}



\newcommand{\Vol}{\operatorname{Vol}}
\newcommand{\tr}{\operatorname{tr}}

\newcommand{\Div}{\operatorname{div}}

\newcommand{\erre}{\mathbb{R}}

\newcommand{\pe}{\mathbb{P}}

\newcommand{\Hn}{\mathbb{H}^n}

\newcommand{\ene}{\mathbb{N}}



\title[First Dirichlet eigenvalue and exit time moment spectra comparisons]{First Dirichlet eigenvalue and exit time moment spectra comparisons}

\author[V. Palmer]{Vicente Palmer*}
\address{Departament de Matem\`{a}tiques- INIT,
	Universitat Jaume I, Castellon, Spain.}
\email{palmer@uji.es}
\author[E. Sarri\'on-Pedralva]{Erik Sarri\'on-Pedralva**}
\address{Departament de Matem\`{a}tiques,
	Universitat Jaume I, Castellon, Spain.}
\email{eriksarrionpredalva@gmail.com}

\thanks{* Work partially supported by the Research Program of University Jaume I Project UJI-B2018-35,  DGI -MINECO grant (FEDER) MTM2017-84851-C2-2-P, and DGI -MINECO grant (FEDER) PID2020-115930GA-I00.}
\thanks{**Work partially supported by the Research Program of University Jaume I Project UJI-B2018-35,  DGI -MINECO grant (FEDER) MTM2017-84851-C2-2-P, and DGI -MINECO grant (FEDER) PID2020-115930GA-I00, and predoctoral fellowship GVA-ESF ACIF/2019/096.}
\keywords{Mean exit time, torsional rigidity,  mean curvature, geodesic ball}

\subjclass[2020]{Primary 53C20, 58C40; Secondary 58J32}

\begin{document}
\maketitle
\begin{abstract}
	We prove explicit upper and lower bounds for the Poisson hierarchy, the averaged $L^1$-moment spectra $\{\dfrac{\mathcal{A}_k\left(B_R^M\right)}{\text{vol}\left(S_R^M\right)}\}_{k=1}^\infty$, and the torsional rigidity $\mathcal{A}_1(B^M_R)$ of a geodesic ball $B^M_R$ in a Riemannian manifold $M^n$ which satisfies that the mean curvatures of the geodesic spheres $S^M_r$ included in it, (up to the boundary $S^M_R$), are controlled by the radial mean curvature of the geodesic spheres $S^\omega_r(o_\omega)$ with same radius centered at the center $o_\omega$ of a rotationally symmetric model space $M^n_\omega$. As a consecuence, we prove a first Dirichlet eigenvalue $\lambda_1(B^M_R)$ comparison theorem and show that equality with the bound $\lambda_1(B^\omega_R(o_\omega))$, (where $B^\omega_r(o_\omega)$ is the geodesic $r$-ball in $M^n_\omega$), characterizes the $L^1$-moment spectrum $\{\mathcal{A}_k(B^M_R)\}_{k=1}^\infty$ as the sequence $\{\mathcal{A}_k(B^\omega_R)\}_{k=1}^\infty$ and vice-versa.
\end{abstract}

\section{Introduction}\label{sec:intro}\

Let $(M^n,g)$ be a complete Riemannian manifold. We shall consider the Brownian motion $X_t$ in $M$ and, given $x \in M$,  its associated family of probability measures $\pe^x$ in the space of Brownian paths emanating from a point $x \in M$.
Given a smoothly bounded precompact domain $D \subseteq M$, the first exit time from $D$ is given by the quantity
$$\tau_D:= \inf \{t \geq 0 : X_t \notin D\}\, .$$

Given $x \in D$, the function $E_D: D \rightarrow \erre$ that assigns to the point $x$  the expectation of the value of the first exit time $\tau_D$ with respect $\pe^x$ is the {\em mean exit time function from x}, $E_D(x)$. We have the following characterization, (see \cite{Dy}),  of this function as a solution of the second order PDE, with Dirichlet boundary data:

\begin{equation}\label{eq:moments1}
	\begin{split}
		\Delta^M E_D+1 &=0,\,\text{ in }D,\\
		\left.E_D\right|_{\partial D} & =0,
	\end{split}
\end{equation}

\noindent where $\Delta^M$ denotes the Laplace-Betrami operator on $(M^n,g)$.

The mean exit time function is the first in a sequence of functions $\{E=u_{1,D}\;, u_{2,D}\;, ....\}$ defined in $D \subseteq M$ inductively as follows

\begin{equation}\label{poisson}
		\begin{split}
			\Delta^Mu_{1,D}+1 & =0,\,\text{ on }D,\\
			u_{1,D}\lvert_{_{\partial D}} & = 0,
		\end{split}
	\end{equation}
	
	\noindent and, for $k\geq 2$,
	
	\begin{equation}\label{poissonk}
		\begin{split}
			\Delta^Mu_{k,D}+ku_{k-1,D} & =  0,\,\, \text{on }\,\,D,\\
			u_{k,D}\lvert_{_{\partial D}} & = 0.
		\end{split}
	\end{equation}

\noindent This sequence is the so-called, (see \cite{DLD}), {\em Poisson hierarchy for} $D$.

The Poisson hierarchy of the domain $D$ determines the $L^p$-moment spectrum of $D$, which can be defined as the following sequence of integrals, (see e.g.  \cite{Mc} and references therein for a more detailed exposition about these concepts):
$$\CA_{p,k}(D):= \bigg(\int_D(u_{k,D}(x))^p dV\bigg)^{\frac{1}{p}}\, , \,\,k=1,2,...,\infty.$$

We are going to focus our study in the $L^1$-moment spectrum of $D$, $\{\CA_{1,k}(D)\}_{k=1}^\infty$ which we denote as $\{\CA_{k}(D)\}_{k=1}^\infty$ and, in particular, in its first value, $\CA_1(D)$, called the \emph{torsional rigidity of $D$} which is defined as the integral

\begin{equation}\label{eq:Apk}
	\CA_1(D)=\int_D E_D(x)\,d\sigma,
\end{equation}

\noindent where $E_D$ is the smooth solution of the  Dirichlet-Poisson equation (\ref{eq:moments1}).

The name \lq\lq torsional rigidity" comes from the fact that, when $D \subseteq \erre^2$ is a plane domain, the quantity 
$\CA_1(D)$ represents the torque required when twisting an elastic beam of uniform cross section $D$, (see \cite{PS}). A natural question consists in to opimize this quantity among all the domains having the same given area/volume in a fixed space or under some other geometrical setting. This problem is known as a {\em Saint-Venant type problem}.

The study of this variational problem in the general context of Riemannian manifolds  involves the stablishment of bounds on the torsional rigidity of a given domain $D \subseteq M$, together the determination of the domains and the spaces that shelter them where these bounds are attained, in an analogous way that the study of the Rayleigh conjecture for the fundamental tone, and the techniques involved in this analysis encompasses the use of the notion of  Schwarz symmetrization as well as the isoperimetric inequalities satisfied by the domains in question.

From the intrinsic point of view, the stablishment of bounds for the $L^p$-moment spectrum and the study of the relationship between the torsional rigidity (and more generally, the $L^1$-moment spectrum), of a domain in a Riemannian manifold $D \subseteq M$ and its Dirichlet spectrum has been explored along the last years in a number of papers, (see, among others,  \cite{BBC}, \cite{BG},\cite{BGJ},  \cite{BFNT}, \cite{DLD}, \cite{ KD}, \cite{KDM}, \cite{HuMP1}, \cite{HuMP2}, \cite{HuMP3}, \cite{Mc},\cite{Mc2},  \cite{McMe}, \cite{MP4} and the references therein). Related with this issue and  in the line of the classical Kac's question, we have the isospectrality problem, namely, to see to what extent the $L^1$- moment spectrum of a domain determines it up to isometry, (see \cite{CD} and \cite{CKD}). 

From the viewpoint of submanifold theory, we can find in the papers \cite{MP4}, \cite{HuMP1}, and  \cite{HuMP2} upper and lower bounds for the $L^1$-moment spectrum of extrinsic balls $B^M_R\cap \Sigma$, (let us denote as $B^M_R$ the geodesic $R$-ball in the manifold $M$), in submanifolds $\Sigma^m \subseteq M^n$ with controlled mean curvature $H_\Sigma$ immersed in ambient Riemannian manifolds $(M,g)$ with radial sectional curvatures $K_{(M, g)}(\frac{\partial}{\partial r}, \, )$ bounded from above or from below.

These bounds were given, on the basis of previously stablished isoperimetric inequalities, by the corresponding values for the torsional rigidity of the Schwartz symmetrization of the geodesic balls in rotationally symmetric spaces with a pole which are warped products of the form $M^n_w=[0, \infty)\times_{w} \erre^{+}$,  and we call the {\em model spaces}. As we shall see in subsection 2.2,  the model spaces $M^n_w$ are rotationally symmetric generalizations of the real space forms with constant sectional curvature $b \in \erre$, denoted as $M^n_{w_b}=\erre^n, \S^n(b),\, \text{or}\, \Hn(b)$, with 
\begin{equation*}
		\omega_b(r)=\begin{cases}
			\dfrac{1}{\sqrt{b}}\sin\left(\sqrt{b}\,r\right),& \quad\text{si }\,b>0\\
			r, & \quad\text{si }\,b=0\\
			\dfrac{1}{\sqrt{-b}}\sinh\left(\sqrt{-b}\,r\right), & \quad\text{si }\,b<0 
		\end{cases}
	\end{equation*}
	
	\noindent We shall denote as $B^\omega_r(o_\omega)$ and as $S^\omega_r(o_\omega)$ the geodesic $r$-ball centered at $o_\omega$ and the geodesic $r$-sphere, respectively, in $M^n_\omega$.

Moreover, in \cite{MP4}, \cite{HuMP1} and \cite{HuMP2}, the geometry of situations where the equality with the bounds was attained was characterized. On the other hand, in these papers it were given too {\em intrinsic} upper and lower bounds for the torsional rigidity of geodesic balls $B^M_R$ in the ambient manifold when it was assumed that $\Sigma=M$, so the extrinsic distance became the intrinsic distance and are only assumed bounds on the radial sectional curvatures of the ambient manifold $M$.

To summarize the intrinsic results obtained in \cite{MP4}, \cite{HuMP1} and \cite{HuMP2} in a couple of statements, we need the following context and notation: let us consider a complete Riemannian manifold $(M,g)$, and a geodesic $R$-ball $B^M_R(o)$ centered at $o \in M$. Let us denote as $ K_{(M, g)}(\frac{\partial}{\partial r}, \, )$ its radial, (from the center $o$) sectional curvatures, (namely, the sectional curvatures of the planes containing the radial vector field $\frac{\partial}{\partial r}$, where $r$ denotes the distance function from the point $o$). With all these notions in hand, we have the following two results. The first concerns the so-called, (see \cite{HuMP1}), {\em averaged $L^1$-moment spectrum} of a geodesic ball:

\begin{theoremA}\label{teoSec}[see \cite{MP4} and \cite{HuMP1}]
Let $(M,g)$ be a complete Riemannian manifold. Let us consider $M^n_w$  a rotationally symmetric model space and let us suppose that 
$$ K_{(M_w, g_{w})}(\frac{\partial}{\partial r}, \, )\, \geq\, (\leq)   \,K_{(M, g)}(\frac{\partial}{\partial r}, \, )\,, $$
\noindent where $K_{(M_w, g_{w})}(\frac{\partial}{\partial r}, \, )$ denotes the radial sectional curvatures of $M^n_w$ from its center point $o_\omega \in M^N_\omega$. 

Then the {\em averaged} $L^1$-moments, $\{\dfrac{\CA_k\left(\brm_R^M(o)\right)}{\Vol\left(\srm_R^M(o)\right)}\}_{k=1}^\infty$ are bounded as follows

 \begin{equation}\label{momentscomp1}
		\dfrac{\CA_k\left(\brm_R^\omega(o_\omega)\right)}{\Vol\left(\srm_R^\omega(o_\omega)\right)}\geq\,(\leq)\,\dfrac{\CA_k\left(\brm_R^M(o)\right)}{\Vol\left(\srm_R^M(o)\right)}\, .
	\end{equation}

\noindent  Equality in inequality  (\ref{momentscomp1}) for some $k_0 \geq 1$ implies that $\brm^M_R(o)$ and $\brm^\omega_R(o_\omega)$ are isometric.
\end{theoremA}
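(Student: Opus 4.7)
The plan is to proceed by induction on $k$, combining a radial transplant from the model with the distance-function Laplacian comparison and the maximum principle, and to close the averaged inequality by a Bishop--Gromov style volume comparison. On the rotationally symmetric model $M^n_\omega$ the Poisson hierarchy $w_k:=u_{k,\brm_R^\omega(o_\omega)}$ is radial and solves
\begin{equation*}
    w_k''(r)+(n-1)\frac{\omega'(r)}{\omega(r)}\,w_k'(r)=-k\,w_{k-1}(r),\qquad w_k(R)=0,\qquad w_0\equiv 1,
\end{equation*}
from which a simple induction gives $w_k(r)>0$ and $w_k'(r)<0$ on $(0,R)$. I would transplant each $w_k$ to $\brm_R^M(o)$ as $\widetilde w_k(x):=w_k(r(x))$, with $r=\dist_M(o,\cdot)$. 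Using $\Delta^M\widetilde w_k=w_k''(r)+w_k'(r)\,\Delta^M r$, the Laplacian comparison theorem converts the radial sectional curvature hypothesis into $\Delta^M r\geq(\leq)(n-1)\omega'(r)/\omega(r)$; combined with $w_k'<0$ this yields $\Delta^M\widetilde w_k\leq(\geq)-k\,\widetilde w_{k-1}$ on $\brm_R^M(o)$.

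Next I would establish the pointwise comparison $\widetilde w_k\geq(\leq)u_{k,\brm_R^M(o)}$ by induction on $k$. The base case $k=1$ follows directly from the maximum principle since $\Delta^M(\widetilde w_1-u_{1,\brm_R^M(o)})\leq(\geq)0$ with zero boundary values. For $k\geq 2$, the induction hypothesis combined with the preceding inequality gives
\begin{equation*}
    \Delta^M(\widetilde w_k-u_{k,\brm_R^M(o)})\,\leq\,(\geq)\,-k\bigl(\widetilde w_{k-1}-u_{k-1,\brm_R^M(o)}\bigr)\,\leq\,(\geq)\,0,
\end{equation*}
and the maximum principle with zero Dirichlet data closes the step.

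To pass from the pointwise estimate to the averaged inequality \eqref{momentscomp1}, I would integrate over $\brm_R^M(o)$ and apply the coarea formula to obtain $\CA_k(\brm_R^M(o))\leq(\geq)\int_0^R w_k(r)\,\Vol(\srm_r^M(o))\,dr$. The same Laplacian comparison yields, by integrating $\partial_r\log\theta=\Delta^M r$ against its model counterpart, that $\Vol(\srm_r^M(o))/\Vol(\srm_r^\omega(o_\omega))$ is monotone, so that for $r\leq R$,
\begin{equation*}
    \frac{\Vol(\srm_r^M(o))}{\Vol(\srm_R^M(o))}\,\leq\,(\geq)\,\frac{\Vol(\srm_r^\omega(o_\omega))}{\Vol(\srm_R^\omega(o_\omega))}.
\end{equation*}
Since $w_k\geq 0$, combining these two inequalities yields \eqref{momentscomp1}. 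The hard part will be the rigidity statement: equality for some $k_0$ must force equality at every link of the chain, and in particular $\Delta^M r=(n-1)\omega'(r)/\omega(r)$ throughout $\brm_R^M(o)$ (using that $w_{k_0}'<0$ in the interior). One must then invoke the rigidity half of the Hessian comparison theorem to promote equality of Laplacians to equality of the full Hessian of $r$, and via the Jacobi equation along radial geodesics deduce that the metric takes the warped-product form of $g_\omega$, so that the exponential map at $o$ is an isometry onto $\brm_R^\omega(o_\omega)$.
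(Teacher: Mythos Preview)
Your argument is correct. The inductive pointwise comparison of the transplanted hierarchy $\widetilde w_k$ with $u_{k,\brm_R^M(o)}$ via the Laplacian comparison and the maximum principle is exactly the mechanism the paper employs (Theorem~\ref{teo:TowMomComp}, stated there under the weaker mean-curvature hypothesis). Where you diverge is in the passage from the pointwise bound to the averaged inequality~\eqref{momentscomp1}: you integrate $\widetilde w_k\geq(\leq)u_k$ via coarea and then feed in the Bishop--Gromov monotonicity of $\Vol(\srm_r^M)/\Vol(\srm_r^\omega)$. The paper instead applies the divergence theorem at level $k{+}1$: from $\Delta^M\widetilde w_{k+1}\leq(\geq)\Delta^M u_{k+1}$ one obtains
\[
\CA_k(\brm_R^M)=-\tfrac{1}{k+1}\!\int_{\brm_R^M}\!\Delta^M u_{k+1}\,d\sigma\ \leq(\geq)\ -\tfrac{1}{k+1}\!\int_{\brm_R^M}\!\Delta^M\widetilde w_{k+1}\,d\sigma=-\tfrac{1}{k+1}\,w_{k+1}'(R)\,\Vol(\srm_R^M),
\]
and the model identity $-\tfrac{1}{k+1}w_{k+1}'(R)=\CA_k(\brm_R^\omega)/\Vol(\srm_R^\omega)$ closes the estimate in one stroke, with no separate volume-comparison input. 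Your route is equally valid and has the virtue of making the Bishop--Gromov ingredient visible; the paper's is shorter and works verbatim under the mere mean-curvature bound, where global sphere-volume monotonicity is not available. Your rigidity sketch---tracing equality back to $\Delta^M r\equiv(n-1)\omega'/\omega$ and then invoking the equality case of the Hessian comparison to recover the warped-product metric---is the right path to the isometry conclusion under the sectional-curvature hypothesis.
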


Concerning now the Torsional Rigidity, we need to assume, in addition, that the model space  $M^n_w$ is {\em balanced from above}, namely, that the isoperimetric quotient given by $$q_\omega(r)=\dfrac{\Vol\left(\brm_r^\omega(o_\omega)\right)}{\Vol\left(\srm_r^\omega(o_\omega)\right)}$$
\noindent is a non-decreasing function of $r$. This condition is satisfied by a wide range of spaces, in particular, for all real space forms of constant sectional curvature. We then obtain the following

\begin{theoremA}\label{teoSec2}[see \cite{MP4} and \cite{HuMP1}]
Let $(M,g)$ be a complete Riemannian manifold. Let us consider $M^n_w$  a rotationally symmetric model space, balanced from above, and let us suppose that 
$$ K_{(M_w, g_{w})}(\frac{\partial}{\partial r}, \, )\, \geq\, (\leq)   \,K_{(M, g)}(\frac{\partial}{\partial r}, \, ) \, ,$$
\noindent where $K_{(M_w, g_{w})}(\frac{\partial}{\partial r}, \, )$ denotes the radial sectional curvatures of $M^n_w$ from its center point. 

Then the torsional rigidity $\CA_1\left(\brm_R^M(o)\right)$ is bounded as follows

\begin{equation}\label{torsrigcomp1}
		\CA_1\left(\brm_{s(R)}^\omega(o_\omega)\right)\geq\,(\leq)\,\CA_1\left(\brm_R^M(o)\right)\, ,
	\end{equation}
	
	 \noindent where $\brm_{s(R)}^\omega(o_\omega)$ is the Schwarz symmetrization of $\brm_R^M(o)$ in the model space $(\MW,g_\omega)$.
	
\noindent Equality in inequality (\ref{torsrigcomp1})  implies that $s(R)=R$ and that $\brm^M_R(o)$ and $\brm^\omega_R(o_\omega)$ are isometric.
	
\end{theoremA}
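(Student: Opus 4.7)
The plan is to adapt the classical Talenti-type Schwarz symmetrization technique to this Riemannian setting, leveraging the mean-curvature comparison that the sectional curvature hypothesis provides for geodesic spheres inside $\brm^M_R(o)$, together with the balanced-from-above property of the model. We focus on the case $K_{(M_\omega,g_\omega)}(\partial/\partial r,\cdot) \ge K_{(M,g)}(\partial/\partial r,\cdot)$: a Bishop--Gromov comparison forces $\Vol(\brm^M_R(o)) \ge \Vol(\brm^\omega_R(o_\omega))$, hence $s(R) \ge R$, and the target becomes $\CA_1(\brm^M_R(o)) \le \CA_1(\brm^\omega_{s(R)}(o_\omega))$; the opposite sign is handled symmetrically. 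Writing $E = E_{\brm^M_R(o)}$, $E_\omega = E_{\brm^\omega_{s(R)}(o_\omega)}$, $\mu(t) = \Vol(\{E > t\})$, and $\mu_\omega(t) = \Vol(\{E_\omega > t\})$, the layer-cake identity $\CA_1(D) = \int_0^\infty \Vol(\{E_D > t\})\, dt$ reduces the whole statement to the distribution comparison $\mu(t) \le \mu_\omega(t)$, with the common initial value $\mu(0) = \mu_\omega(0) = \Vol(\brm^M_R(o))$ built into the definition of $s(R)$.

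On each regular level, the co-area formula and the Poisson equation $\Delta^M E = -1$ (via the divergence theorem on $\{E > t\}$) yield
\begin{equation*}
\mu(t) = \int_{\{E = t\}} \abs{\nabla E}\, d\sigma, \qquad -\mu'(t) = \int_{\{E = t\}} \frac{d\sigma}{\abs{\nabla E}},
\end{equation*}
so Cauchy--Schwarz gives $\Vol(\{E = t\})^2 \le -\mu(t)\,\mu'(t)$. The radial sectional curvature hypothesis, via Hessian comparison, produces a pointwise bound on the mean curvatures of the geodesic spheres $\srm^M_r$ inside $\brm^M_R(o)$ relative to those of $\srm^\omega_r(o_\omega)$; combined with the balanced-from-above condition on $\MW$ this yields the isoperimetric inequality
\begin{equation*}
\Vol(\partial \Omega) \ge \Vol(\srm^\omega_{\rho(V)}(o_\omega))
\end{equation*}
for every $\Omega \Subset \brm^M_R(o)$ of volume $V$, where $\rho(V)$ is defined by $\Vol(\brm^\omega_{\rho(V)}(o_\omega)) = V$. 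Applied to $\Omega = \{E > t\}$, this turns the Cauchy--Schwarz bound into $\Vol(\srm^\omega_{\rho(\mu(t))}(o_\omega))^2 \le -\mu(t)\mu'(t)$, while the same computation for $E_\omega$ on the model is an \emph{equality}, since its level sets are themselves concentric geodesic balls. A Gronwall-type ODE comparison from the common initial datum then yields $\mu \le \mu_\omega$ pointwise on $[0,\infty)$, and integration delivers $\CA_1(\brm^M_R(o)) \le \CA_1(\brm^\omega_{s(R)}(o_\omega))$.

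For the equality case, if $\CA_1(\brm^M_R(o)) = \CA_1(\brm^\omega_{s(R)}(o_\omega))$ then $\mu \equiv \mu_\omega$, which forces equality throughout: the isoperimetric inequality is saturated for every super-level $\{E > t\}$ and Cauchy--Schwarz is tight (so $\abs{\nabla E}$ is constant on each level set). Rigidity in the isoperimetric inequality then identifies each $\{E > t\}$ with a model geodesic ball $\brm^\omega_{\rho(\mu(t))}(o_\omega)$; taking $t \downarrow 0$ reads off $s(R) = R$ and produces the isometry $\brm^M_R(o) \cong \brm^\omega_R(o_\omega)$.

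The decisive step, and the place I expect the main obstacle to lie, is producing the isoperimetric inequality of the second paragraph from the radial sectional curvature comparison together with the balanced-from-above condition on $\MW$. It is precisely here that the ``balanced from above'' hypothesis earns its keep: it is what ensures that the model-ball profile $V \mapsto \Vol(\srm^\omega_{\rho(V)}(o_\omega))$ is monotone in the correct direction to dominate the isoperimetric profile of $\brm^M_R(o)$. A secondary subtlety is the equality case, where the level-by-level rigidity must be promoted to a single isometry of the whole ball.
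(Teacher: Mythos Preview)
Your approach has a genuine gap at precisely the point you flag as the ``decisive step'': the isoperimetric inequality $\Vol(\partial\Omega) \ge \Vol(\srm^\omega_{\rho(V)}(o_\omega))$ for \emph{arbitrary} domains $\Omega \Subset \brm^M_R(o)$ simply does not follow from the hypotheses. A bound on the \emph{radial} sectional curvatures of $M$ yields, via Hessian comparison, only a comparison of the mean curvatures of the \emph{concentric geodesic spheres} $\srm^M_r(o)$ with those of the model spheres; it says nothing about the perimeter of a general region whose boundary is not a distance sphere. The balanced-from-above property is a condition on the model $\MW$ alone (monotonicity of $q_\omega$) and cannot manufacture an isoperimetric profile for $M$. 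Without this inequality the Talenti machinery collapses, because the level sets $\{E>t\}$ of the true mean exit time $E=E_R^M$ are not geodesic balls in general.

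The route taken in \cite{MP4}, \cite{HuMP1}, and reproduced in this paper under weaker hypotheses (Theorem~\ref{teo:2.1}, Proposition~\ref{prop:ineqpsiE}) circumvents this by inserting an intermediate: the \emph{transplanted} mean exit time $\E_R^\omega(x)=E_R^\omega(r_o(x))$, which is radial by construction. The curvature bound gives $\hrm_{\srm_r^\omega}\le \hrm_{\srm_r^M}$, hence $\Delta^M \E_R^\omega \le -1$, and the maximum principle yields $E_R^M \le \E_R^\omega$ on $\brm^M_R(o)$. One then symmetrizes $\E_R^\omega$ rather than $E_R^M$: because its super-level sets are exactly the geodesic balls $\brm^M_{a(t)}(o)$, the only isoperimetric input needed is the comparison $\Vol(\brm_r^\omega)/\Vol(\srm_r^\omega)\ge \Vol(\brm_r^M)/\Vol(\srm_r^M)$ for concentric balls, which \emph{does} follow from the mean-curvature bound. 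The balanced-from-above assumption enters to pass from $q_\omega(a(t))$ to $q_\omega(\widetilde{r}(t))$ (since $\widetilde{r}(t)\ge a(t)$) and conclude $(\E_R^\omega)^* \le E_{s(R)}^\omega$. Chaining and using equimeasurability gives
\[
\CA_1(\brm^M_R)=\int_{\brm^M_R} E_R^M \le \int_{\brm^M_R} \E_R^\omega = \int_{\brm^\omega_{s(R)}}(\E_R^\omega)^* \le \int_{\brm^\omega_{s(R)}} E_{s(R)}^\omega=\CA_1(\brm^\omega_{s(R)}).
\]
The equality analysis then runs through equality in each of these steps, not through rigidity of a general isoperimetric inequality.
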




As a consequence of the bounds for the $L^1$-moment spectrum stated in Theorem \ref{teoSec}, and the proof of Theorem 1.1 in \cite{McMe}, (where it was presented a formula for the first Dirichlet eigenvalue of a precompact domain $D$ in a Riemnnian manifold $M$, $\lambda_1(D)$, in terms of its $L^1$-moment spectrum, $\{\CA_k(D)\}_{k=1}^\infty$), it was obtained the following version of Cheng's eigenvalue comparison theorem:

\begin{theoremA}\label{ChengSec}[see \cite{HuMP3},\cite{Cheng75-1}, \cite{Cheng75-2}]

Let $(M,g)$ be a complete Riemannian manifold. Let us denote as $ K_{(M, g)}(\frac{\partial}{\partial r}, \, )$ its radial sectional curvatures and as  $ Ricc_{(M, g)}(\frac{\partial}{\partial r}, \frac{\partial}{\partial r})$ its radial Ricci curvatures at any point. 

Let us consider $M^n_w$  a rotationally symmetric model space and let us suppose that 
\begin{equation*}
\begin{aligned}
 K_{(M_w, g_{w})}(\frac{\partial}{\partial r}, \, )\, &\geq \, K_{(M, g)}(\frac{\partial}{\partial r}, \, )\, ,\\
\bigg( \text{or that} \, \,\,Ricc_{(M_w, g_{w})}(\frac{\partial}{\partial r}, \frac{\partial}{\partial r})\,&\leq \, Ricc_{(M, g)}(\frac{\partial}{\partial r}, \frac{\partial}{\partial r})\bigg)\, ,
\end{aligned}
\end{equation*}

\noindent where $K_{(M_w, g_{w})}(\frac{\partial}{\partial r}, \, )$ and $Ricc_{(M_w, g_{w})}(\frac{\partial}{\partial r}, \frac{\partial}{\partial r})$ denotes the radial sectional and Ricci curvatures of $M^n_w$ at its center point. 

Then
$$\lambda_1(B^{\omega}_R(o_\omega)) \,\leq\,(\geq)  \, \lambda_1(B^M_R(o)).$$

\noindent for all $R < inj(o) \leq inj(o_w)$.

Equality in any of these inequalities holds if and only if the geodesic balls $\brm^M_R(o)$ and $\brm^\omega_R(o_\omega)$ are isometric.
\end{theoremA}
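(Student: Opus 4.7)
The approach combines the averaged moment comparison of Theorem~\ref{teoSec} with the formula of McDonald--Meyer (\cite{McMe}, Theorem 1.1) that expresses the first Dirichlet eigenvalue as an asymptotic invariant of the $L^1$-moment sequence. Using the spectral decomposition $u_{k,D}(x) = \sum_i \langle 1, \phi_i\rangle\,\lambda_i^{-k}\phi_i(x)$ together with the positivity of the first Dirichlet eigenfunction, one has the identity
\[
\lambda_1(D) \;=\; \lim_{k\to\infty}\bigl(\mathcal{A}_k(D)\bigr)^{-1/k},
\]
valid for every precompact domain $D\subseteq M$; this is what transports a moment comparison into an eigenvalue comparison.

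Under the sectional-curvature hypothesis $K_{M_\omega}\geq K_M$, Theorem~\ref{teoSec} yields, for every $k\geq 1$,
\[
\mathcal{A}_k(B^M_R(o)) \;\leq\; C_R\,\mathcal{A}_k(B^\omega_R(o_\omega)),\qquad C_R:=\frac{\Vol(S^M_R(o))}{\Vol(S^\omega_R(o_\omega))}.
\]
Raising both sides to the $1/k$-th power and letting $k\to\infty$, the fixed positive constant $C_R^{1/k}\to 1$, so the McDonald--Meyer formula immediately yields $\lambda_1(B^\omega_R(o_\omega)) \leq \lambda_1(B^M_R(o))$. For the Ricci-curvature alternative $Ricc_{M_\omega}\leq Ricc_M$, Theorem~\ref{teoSec} is not directly available (one can check on $B_{\pi/2}(\mathbb{S}^2)$ versus the Euclidean disk of the same radius that the averaged moment inequality actually reverses), so I would proceed by a direct Cheng-style transplantation: lift the radial first eigenfunction $\phi_\omega$ of $B^\omega_R(o_\omega)$ to the test function $\tilde\phi(x):=\phi_\omega(r(x))$ on $B^M_R(o)$, with $\phi_\omega'\leq 0$ on $[0,R)$, and use the Laplacian comparison $\Delta r\leq \Delta_\omega r$ (a consequence of the Ricci lower bound via Bishop--Gromov) to obtain $\Delta\tilde\phi + \lambda_1(B^\omega_R)\,\tilde\phi \geq 0$; plugging $\tilde\phi$ into the Rayleigh quotient then gives the reversed inequality $\lambda_1(B^M_R) \leq \lambda_1(B^\omega_R)$.

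The main obstacle is the equality case. The $1/k$-th-root asymptotic erases all finite-$k$ information, so equality of the first eigenvalues does not automatically yield equality in any single moment $\mathcal{A}_{k_0}$ from which the rigidity of Theorem~\ref{teoSec} could be invoked. The plan is to route both cases through the Cheng transplantation: if the eigenvalues agree, then $\tilde\phi$ saturates the Rayleigh quotient on $B^M_R(o)$, so $\tilde\phi$ is itself a first Dirichlet eigenfunction and the Laplacian comparison $\Delta r \leq \Delta_\omega r$ must be attained pointwise along every minimizing radial geodesic from $o$. By the rigidity case of the Laplacian/Hessian comparison, the mean curvatures of the geodesic spheres $S^M_s(o)$ agree with those of $S^\omega_s(o_\omega)$ for every $s\leq R$; since $B^\omega_R(o_\omega)$ is a radial warped product, this forces the same warped-product structure on $B^M_R(o)$ and hence the desired isometry $B^M_R(o)\cong B^\omega_R(o_\omega)$.
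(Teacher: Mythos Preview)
Your inequality argument in the sectional case is essentially the paper's: combine the averaged-moment comparison of Theorem~\ref{teoSec} with the McDonald--Meyers description of $\lambda_1$ in terms of the $L^1$-moment sequence. The paper (see the proof of Theorem~\ref{th_const_below2}, which is the mean-curvature analogue) uses the $\sup$-characterization
\[
\lambda_1(D)=\sup\Bigl\{\eta\ge 0:\ \limsup_{k}\bigl(\tfrac{\eta}{2}\bigr)^k\tfrac{\CA_k(D)}{\Gamma(k+1)}<\infty\Bigr\}
\]
rather than your $k$-th root version, but the mechanism is the same: the fixed constant $C_R=\Vol(\srm^M_R)/\Vol(\srm^\omega_R)$ disappears in the limit and one gets $\lambda_1(B^\omega_R)\le\lambda_1(B^M_R)$. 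For the Ricci alternative you are right that Theorem~\ref{teoSec} as stated does not apply; the paper's own route is to observe that a radial Ricci lower bound implies $\hrm_{\srm^M_r}\le\hrm_{\srm^\omega_r}$ via the Laplacian comparison, and then invoke the mean-curvature version (Theorem~\ref{th_const_below2}) directly. Your direct transplantation of $\phi_\omega$ into the Rayleigh quotient is the classical Cheng argument and works equally well.

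Where your proposal diverges more substantially is the equality case. The paper's method keeps the constant $C_R$ visible as an intermediate factor, so that $\lambda_1(B^M_R)=\lambda_1(B^\omega_R)$ forces $\Vol(\srm^M_R)=\Vol(\srm^\omega_R)$; the volume rigidity of Corollary~\ref{cor:MeanComp} then yields $\hrm_{\srm^M_r}=\hrm_{\srm^\omega_r}$ for all $r\le R$, and under the sectional (resp.\ Ricci) hypothesis the Hessian (resp.\ Riccati) comparison rigidity upgrades this to an isometry --- this is exactly the equality clause of Theorem~\ref{teoSec}. Your alternative via saturation of the Rayleigh quotient/Barta inequality is valid, but be careful with your final sentence: equality of the mean curvatures of geodesic spheres does \emph{not} by itself force a warped-product structure on $B^M_R$ --- this is precisely the point of the example in Section~\ref{ex}. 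You must explicitly use the sectional (resp.\ Ricci) bound a second time to pass from $\Delta r=\Delta_\omega r$ to $\Hess r=\tfrac{\omega'}{\omega}(g-dr\otimes dr)$, and only then conclude the isometry.
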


On the other hand,  in the paper \cite{Mc2}, P. McDonald showed that, given a precompact domain $D \subseteq M$ in a complete Riemannian manifold $M$ such that satisfies the inequalities $\CA_k(D) \leq \CA_k(D^*)$ where $D^*$ is the Schwarz symmetrization of $D$ in a constant curvature space form $M_{\omega_b}$, then we have the inequality $\lambda_1(D^*) \leq \lambda_1(D)$, (see Theorem 1 in \cite{Mc2}). 


Following with versions of Cheng's result, in the paper \cite{BM}, the authors proved that Cheng's eigenvalue comparison is still valid assuming bounds on the mean curvature of (intrinsic) distance spheres, a weaker hypothesis (as we shall see below), than the bounds on the sectional curvatures of the manifold:  

\begin{theoremA}[see \cite{BM}]\label{teoPacelli}
Let $B^M_R \subseteq M^n$ and $B^{w_{b}}_R$ be geodesic $R$-balls in a Riemannian manifold $(M,g)$ and in the real space form with constant sectional curvatures $b\in \erre$, $M^n_{w_b}$, respectively, both within the cut locus of their centers and let $(t, \ov{\theta}) \in (0,R]\times\mathbb{S}^{n-1}_1$ be the polar coordinates for $B^M_R$ and $B^{w_b}_R$. 

Then, if $H_{S^M_t}(t,\ov{\theta})$ and $H_{S^{w_b}_t}(t)$ are the, (pointed inward), mean curvatures of the distance spheres $S^M_t$ in $M$ and $S^{w_b}_t$ in the real space form of constant curvature $M^{w_b}$ respectively and we assume that
$$H_{S^{w_b}_t}(t) \, \leq\, (\geq)    \, H_{S^M_t}(t,\ov{\theta})\,\,\forall t \leq R\,\,\forall \ov{\theta} \in \SN_1$$
\noindent we have that
$$\lambda_1(B^{\omega_b}_R(o_\omega)) \, \leq\, (\geq)  \, \lambda_1(B^M_R(o)).$$

Equality in any of these inequalities holds if and only if $H_{S^M_t}(t,\ov{\theta}) = H_{S^{w_b}_t}(t)\,\,\forall t \leq R\,\,\forall \ov{\theta} \in \SN_1.$
\end{theoremA}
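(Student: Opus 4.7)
The plan is to prove this via the classical transplantation method combined with Barta's inequality, using the fact that the hypothesis on the mean curvatures of distance spheres is precisely a pointwise comparison between the Laplacians of the distance functions from the respective centers.

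First I would translate the mean-curvature hypothesis into a Laplacian inequality. For the distance function $r$ from $o$ on $M$, the identity $\Delta^M r(t,\ov{\theta})=(n-1)\,H_{S^M_t}(t,\ov{\theta})$ holds at points where $r$ is smooth (the sign convention matching the inward-pointing normalized mean curvature, which is positive in the model case). The assumption $R<\inj(o)$ guarantees smoothness of $r$ on $B^M_R\setminus\{o\}$, and the analogous remark applies in the model. Thus the hypothesis $H_{S^{w_b}_t}(t)\leq H_{S^M_t}(t,\ov{\theta})$ becomes
\[
\Delta^{\omega_b}r(t)\;\leq\;\Delta^M r(t,\ov{\theta})\qquad\text{on }B^M_R\setminus\{o\}.
\]

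Next, let $\phi:[0,R]\to\R$ be the radial profile of the first Dirichlet eigenfunction of $B^{\omega_b}_R$. Rotational symmetry together with simplicity of $\lambda_1(B^{\omega_b}_R)$ forces the eigenfunction to be radial; the Sturm--Liouville ODE with boundary data $\phi'(0)=0$ and $\phi(R)=0$, normalized so that $\phi(0)>0$, gives $\phi>0$ on $[0,R)$ and $\phi'<0$ on $(0,R]$ by a standard argument (a critical point of $\phi$ in $(0,R)$ with $\phi>0$ would be a strict local maximum by the ODE, contradicting $\phi'(0^+)\leq 0$). Transplant $\phi$ to $B^M_R$ by setting $\tilde\phi(x):=\phi(r(x))$; the chain rule for the Laplacian of a radial function gives
\[
\Delta^M\tilde\phi=\phi''(r)+\phi'(r)\,\Delta^M r,\qquad \Delta^{\omega_b}\phi=\phi''(r)+\phi'(r)\,\Delta^{\omega_b}r=-\lambda_1(B^{\omega_b}_R)\,\phi,
\]
and subtracting the two identities yields
\[
-\Delta^M\tilde\phi-\lambda_1(B^{\omega_b}_R)\,\tilde\phi\;=\;\phi'(r)\bigl(\Delta^{\omega_b}r-\Delta^M r\bigr)\;\geq\;0,
\]
since $\phi'(r)\leq 0$ and $\Delta^{\omega_b}r-\Delta^M r\leq 0$ by the translation above. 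As $\tilde\phi>0$ on $B^M_R$ and vanishes on $\partial B^M_R$, Barta's inequality gives
\[
\lambda_1(B^M_R)\;\geq\;\inf_{B^M_R}\frac{-\Delta^M\tilde\phi}{\tilde\phi}\;\geq\;\lambda_1(B^{\omega_b}_R).
\]
The reverse inequality, under the reversed mean-curvature hypothesis, follows by reversing every sign in this chain.

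For the rigidity claim, equality $\lambda_1(B^M_R)=\lambda_1(B^{\omega_b}_R)$ forces equality throughout Barta, so $\tilde\phi$ is itself a first Dirichlet eigenfunction on $B^M_R$ and therefore $\phi'(r)(\Delta^{\omega_b}r-\Delta^M r)\equiv 0$ on $B^M_R\setminus\{o\}$; since $\phi'(r)<0$ on $(0,R]$, we conclude $\Delta^M r(t,\ov{\theta})=\Delta^{\omega_b}r(t)$ pointwise, i.e.\ $H_{S^M_t}(t,\ov{\theta})=H_{S^{w_b}_t}(t)$ for every $t\leq R$ and $\ov{\theta}\in\SN_1$. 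The main obstacle I expect is the behaviour at the center $o$, where polar coordinates are singular and $\Delta r$ is undefined; this is controlled by the indicial analysis of the Sturm--Liouville equation near $r=0$, which shows that $\phi$ extends smoothly through the origin with $\phi'(0)=0$, so that $\tilde\phi\in C^\infty(B^M_R)$ and the Laplacian identities above hold classically on the whole ball.
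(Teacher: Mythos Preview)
Your argument is correct and is essentially the original Bessa--Montenegro proof that the paper attributes to \cite{BM}: transplant the radial first eigenfunction of the model ball, use the identity $\Delta^M r=(n-1)H_{S^M_t}$ (Proposition~\ref{lapmean} here) to convert the mean-curvature hypothesis into a sign on $\phi'(r)\bigl(\Delta^{\omega_b}r-\Delta^M r\bigr)$, and conclude via Barta. The paper itself does not reprove Theorem~\ref{teoPacelli} directly but describes exactly this route in the paragraph following the statement.

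What the paper does do is give an \emph{alternative} proof of the same eigenvalue comparison (in the more general model-space setting, Theorem~\ref{th_const_below2}) that avoids Barta entirely: it first establishes the averaged moment inequalities $\CA_k(B^M_R)/\Vol(S^M_R)\leq \CA_k(B^\omega_R)/\Vol(S^\omega_R)$ (Corollary~\ref{isoptors}) and the volume inequality $\Vol(S^M_R)\geq\Vol(S^\omega_R)$ (Corollary~\ref{cor:MeanComp}), and then feeds these into the McDonald--Meyers description of $\lambda_1$ as the radius of convergence of the moment generating series (equation~\eqref{desc}). Your approach is shorter and self-contained; the paper's approach is longer but yields the eigenvalue comparison as a by-product of a much richer package (Poisson hierarchy, moment spectrum, torsional rigidity), and in particular gives the equality characterization simultaneously for all of those invariants, not just for $\lambda_1$.
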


Its proof relies in Barta's Lemma and the expresion of the Laplacian of the first Dirichlet eigenfunction  in polar coordinates.  It is precisely from this intrinsic expression that the use, as hypotheses, of bounds on the mean curvature of distance spheres comes from.

Therefore, it can be said that the results we are going to present in the following paper are inspired, by one hand,  by the intrinsic bounds for the torsional rigidity and the $L^1$-moment spectrum of the geodesic balls and by the estimation of $\lambda_1(B^M_R)$ obtained in the  papers \cite{MP4}, \cite{HuMP1}, \cite{HuMP2} and \cite{HuMP3}, and by the other hand, by the weaker restrictions on the mean curvatures of geodesic spheres assumed in \cite{BM} as well as the comparisons for the $L^1$-moment spectrum and the first Dirichlet eigenvalue given  in \cite{Mc} and \cite{Mc2}.

\subsection{A glimpse at our results}\

We consider along this paper, a complete Riemannian manifold $(M^n,g)$ and a rotationally symmetric model space $(\MW, g_w)$, with center $o_w$, and we shall assume that  given $o \in M$ a point in $M$, the injectivity radius  of $o\in M$ satisfies $inj(o) \leq \inj(o_w)$. Let us fix $R < inj(o) \leq inj(o_w)$ assuming that the pointed inward mean curvatures of metric $r$-spheres satisfies
	
	\begin{equation*}
	\begin{aligned}
		\hrm_{\srm_R^\omega(o_\omega)}\,&\leq\, \hrm_{\srm_R^M(o)}\quad\text{for all}\quad 0 < r \leq R\\
		\bigg( \text{or that }\,\,\,\,\,\hrm_{\srm_R^\omega(o_\omega)}\,&\geq\, \hrm_{\srm_R^M(o)}\quad\text{for all}\quad 0 < r \leq R\bigg)
		\end{aligned}
	\end{equation*}

These hypotheses are the same than the conditions assumed in \cite{BM}, and constitutes a more general assumption than the bounds for the sectional and the Ricci curvatures in Theorems \ref{teoSec} and \ref{ChengSec}, as we shall see in next Subsection \ref{ex}. On the other hand, they implies, in its turn,  the following isoperimetric conditions satisfied by the geodesic $r$- balls with $r \leq R$ in the complete Riemannian manifold $M$, 
\begin{equation} \label{hypisop}
	\dfrac{\Vol\left(\brm_r^\omega(o_\omega)\right)}{\Vol\left(\srm_r^\omega(o_\omega)\right)}\geq\,(\leq)\,\dfrac{\Vol\left(\brm_r^M(o)\right)}{\Vol\left(\srm_r^M(o)\right)}\quad\text{for all}\quad 0<r \leq R.
\end{equation}
		
		Concerning the use of isoperimetric inequalities, (no exactly the given in (\ref{hypisop})), in the study of the relation between the moments spectrum and the Dirichlet spectrum, we refer to the paper \cite{Mc}.
		
		Under these restrictions on the mean curvatures of geodesic spheres we have obtained all the results in this paper, the most important of which are Proposition \ref{prop3.2}, Theorem \ref{teo:MeanComp} and Corollary \ref{cor:MeanComp} in Section \ref{sec:MeanComp}, Theorem \ref{teo:TowMomComp}, Corollary \ref{isoptors} and Theorem \ref{teo:2.1} in Section \ref{sec:TorRidCom1}, and Theorem \ref{th_const_below2} and Corollary \ref{th_const_below3} in Section \ref{sec:TorRidCom2}. 
		

	We are going to present in the following statements of Theorem \ref{newresult1}, Theorem \ref{newresult2} and Theorem\ref{ChengMean}  summaryzed versions of some of our results concerning bounds on the Poisson hierarchy, the $L^1$-moments spectrum and the first Dirichlet eigenvalue of the geodesic balls $B^M_R$ in Sections \ref{sec:TorRidCom1} and \ref{sec:TorRidCom2}, in order to see that they are a generalization of those presented in Theorem \ref{teoSec} and in Theorem \ref{ChengSec}. 
	
	The techniques used in the proof of these results are basically the same than in the cited papers \cite{MP4}, \cite{HuMP1}, and \cite{HuMP2}, but now with the  intrinsic point of view as the main perspective. These techniques encompasses the use of the formula of the Laplacian of the mean exit time function in polar coordinates, the application of the Maximum principle, the properties of the Schwartz symmetrization of the ball $B^M_R$ and the explicit expression of the first Dirichlet eigenvalue of a geodesic ball $B^w_R$  in a rotationally symmetric model space $M^n_w$ as a limit of the sequence given by the $L^1$-moment spectrum  of this geodesic ball, $\{\CA_{k}(B^w_R)\}_{k=1}^\infty$,  obtained in the paper \cite{HuMP3}. This formula for $\lambda_1(B^\omega_R(o_\omega))$ was subsequently extended in the paper \cite{BGJ} to any precompact domain $\Omega \subseteq M$, namely
\begin{equation}\label{DirMoment2}
\begin{aligned}
\lambda_1(\Omega)=\lim_{k \to \infty}\dfrac{k\CA_{k-1}\left(\Omega\right)}{\CA_{k}\left(\Omega\right)}.
\end{aligned}
\end{equation}

In fact, the presence of the mean curvature of geodesic spheres $H_{S^M_{t}} $ in the expresion of the Laplacian operator in polar coordinates has played a key role in the stablishment of our hypotheses, (as it is obvious), and also in the analysis of the equality with the bounds in all of our comparisons.
	
	Concerning this analysis of the equality case, an important notion which appears in next Theorems \ref{newresult1}, \ref{newresult2} and \ref{ChengMean}, (in fact, along all the equality discussions in the paper), is the concept of {\em determination} of a Riemannian invariant defined on the geodesic balls by its $L^1$-moment spectrum, its  $L^1$-averaged moment spectrum or its Torsional Rigidity, in a way which, altthough it is not exactly the same, it has been directly inspired by P. McDonald in \cite{Mc2}. 
	
	In the paper \cite{Mc2} it is presented the notion of {\em determination} of a Riemannian invariant  $I(D)$ defined on the precompact domain $D\subseteq M$ by the $L^1$-moment spectrum of $D$: we say that $\{\CA_k(D)\}_{k=1}^\infty$ {\em determines} the invariant $I(D)$ if and only if when $\{\CA_k(D)\}_{k=1}^\infty=\{\CA_k(D')\}_{k=1}^\infty$, then $I(D)=I(D')$. With this definition, in \cite{Mc2} is proved that the $L^1$-moment spectrum of a precompact domain $D$  determines its heat content. 
	
	We shall see in the following Theorem \ref{newresult1} and Theorem \ref{newresult2} that, under our hypotheses, the Torsional Rigidity $\CA_1(B^M_R)$ and any individual averaged moment   $\dfrac{\CA_{k_0}\left(\brm_R^M(o)\right)}{\Vol\left(\srm_R^M(o)\right)}$ determines the Poisson hierarchy, the volume, the $L^1$-moment spectrum and the first Dirichlet eigenvalue of the ball $B^M_R$, in the following sense:
	
	 When $\CA_1(B^M_R(o))=\CA_1(B^\omega_{s(R)}(o_\omega))$, or there exists $k_0 \geq 1$ such that 
	$$\dfrac{\CA_{k_0}\left(\brm_R^M(o)\right)}{\Vol\left(\srm_R^M(o)\right)}=\dfrac{\CA_{k_0}\left(\brm_R^\omega(o_\omega)\right)}{\Vol\left(\srm_R^\omega(o_\omega)\right)},$$
	\noindent then $s(R)=R$ and the Poisson hierarchy, the volume, the $L^1$-moment spectrum and the first Dirichlet eigenvalue of the ball $B^M_R$ is the same than the corresponding values for the geodesic ball $B^\omega_R(o_\omega)$ in the model space $M^n_\omega$.
	
	With all these previous considerations, we present  the following:
	
\begin{theorem}\label{newresult1}[see Corollary \ref{isoptors}]

Let us consider a complete Riemannian manifold $(M^n,g)$ and a rotationally symmetric model space $(\MW, g_w)$, with center $o_w$, and we shall assume that  given $o \in M$ a point in $M$, the injectivity radius  of $o\in M$ satisfies $inj(o) \leq \inj(o_w)$. Let us fix $R < inj(o) \leq inj(o_w)$ assuming that the pointed inward mean curvatures of metric $r$-spheres satisfies
	
	\begin{equation}\label{eq:meancurvatureconditions}
		\hrm_{\srm_R^\omega(o_\omega)}\leq\,(\geq)\, \hrm_{\srm_R^M(o)}\quad\text{for all}\quad 0 < r \leq R.
	\end{equation}
Then, for all $k\geq 1$,
	
	\begin{equation}\label{momentscomp2}
		\dfrac{\CA_k\left(\brm_R^\omega(o_\omega)\right)}{\Vol\left(\srm_R^\omega(o_\omega)\right)}\geq\,(\leq)\,\dfrac{\CA_k\left(\brm_R^M(o)\right)}{\Vol\left(\srm_R^M(o)\right)}.
	\end{equation}
	
	 Equality in any of  inequalities \ref{momentscomp2} for some $k_0 \geq 1$ implies  that 
	 $$\hrm_{\srm_R^\omega(o_\omega)}=\,\hrm_{\srm_R^M(o)}\,\,\,\text{for all }0<r\leq R$$
	 \noindent and hence, we have the equalities
\begin{enumerate}
	\item Equality $\bu_{k,R}^{\omega}=u_{k,R}$ on $B^M_R(o)$ for all $k\geq 1$, and hence, equality $\bu_{k,r}^{\omega}=u_{k,r}$ on $B^M_r(o)$ for all $k\geq 1$ and for all $0 < r \leq R$.
	\item Equalities $\Vol\left(\brm_r^\omega(o_\omega)\right)=\Vol\left(\brm_r^M(o)\right)$ and $\Vol\left(\srm_r^\omega(o_\omega)\right)=\Vol\left(\srm_r^M(o)\right)\,\,\,\text{for all }\, 0<r \leq R$.
	\item Equalities $\CA_k\left(\brm_r^\omega(o_\omega)\right)=\CA_k\left(\brm_r^M(o)\right)$,  for all $k\geq 1$ and for all $0<r \leq R$.
	\item Equality $\lambda_1(B^M_r(o))=\lambda_1(B^w_r(o_\omega))$ for all $0<r \leq R$.

\noindent Namely, {\em one} value of $\dfrac{\CA_k\left(\brm_R^M(o)\right)}{\Vol\left(\srm_R^M(o)\right)}$ for some $k \geq 1$ determines the Poisson hierarchy, the volume, the $L^1$-moment spectrum and the first Dirichlet eigenvalue of the ball $B^M_r(o)$ for all $0<r \leq R$.
\end{enumerate}

\end{theorem}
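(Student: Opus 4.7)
The plan is to establish the averaged moment comparison (\ref{momentscomp2}) for every $k\geq 1$ by induction on $k$, and then to extract the rigidity conclusions $(1)$--$(4)$ from the equality case. This parallels the strategy of \cite{MP4,HuMP1,HuMP2} but with the mean curvature bound (\ref{eq:meancurvatureconditions}) replacing the sectional curvature hypothesis of Theorem \ref{teoSec}; as a consequence, comparisons will be carried out on spherical averages rather than through a global Schwarz symmetrization.

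I would first transplant the radial Poisson hierarchy solution $\bu_{k,R}^{\omega}(r)$ of $M^n_\omega$ onto $B^M_R(o)$ as a radial function via the exponential map at $o$. The polar-coordinate expression of the Laplacian gives
$$\Delta^M \bu_{k,R}^{\omega}(r) = (\bu_{k,R}^{\omega})''(r) + \hrm_{\srm_r^M}(r,\ov{\theta})\,(\bu_{k,R}^{\omega})'(r),$$
whereas on the model
$$(\bu_{k,R}^{\omega})''(r) + \hrm_{\srm_r^\omega}(r)\,(\bu_{k,R}^{\omega})'(r) = -k\,\bu_{k-1,R}^{\omega}(r).$$
Subtracting, and using that $(\bu_{k,R}^{\omega})' < 0$ on $(0,R]$ together with (\ref{eq:meancurvatureconditions}), I obtain the one-sided Laplacian comparison
$$\Delta^M \bu_{k,R}^{\omega} + k\,\bu_{k-1,R}^{\omega}\ \leq\ (\geq)\ 0.$$
Combined with the inductive hypothesis for index $k-1$ (the base case $k=1$ being immediate, since there $u_{0,R}\equiv 1$), the maximum principle produces a pointwise comparison between the transplanted function and $u_{k,R}$; integration along concentric spheres, together with the isoperimetric inequality (\ref{hypisop}) that (\ref{eq:meancurvatureconditions}) implies, would then convert this into the averaged inequality (\ref{momentscomp2}).

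For the rigidity part, suppose equality holds in (\ref{momentscomp2}) for some $k_0\geq 1$. Tracing the chain of inequalities above backwards, the strict maximum principle forces equality throughout; in particular the Laplacian comparison must hold as an equality, and since $(\bu_{k_0,R}^{\omega})' < 0$ strictly on $(0,R)$, this forces $\hrm_{\srm_r^\omega} = \hrm_{\srm_r^M}$ pointwise for every $0 < r \leq R$. The ODE relating the mean curvature of a metric sphere to the logarithmic derivative of its volume then yields equality of sphere volumes and, by integration, of ball volumes, which gives item $(2)$. With both mean curvature and radial volume profiles of $M$ matching those of $M^n_\omega$, the radial transplant of $\bu_{k,r}^{\omega}$ solves the same Dirichlet--Poisson system on $B^M_r(o)$ as $u_{k,r}$ does, for every $r\leq R$; uniqueness then gives items $(1)$ and $(3)$. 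Finally, item $(4)$ follows from the moment-spectrum limit formula (\ref{DirMoment2}) applied on both sides.

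The main obstacle I anticipate is the passage from the pointwise Laplacian comparison to the averaged moment comparison (\ref{momentscomp2}): the quotient $\CA_k(\brm_R^M(o))/\Vol(\srm_R^M(o))$ combines a PDE quantity with the ambient sphere volume, so the maximum principle on $B^M_R(o)$ alone is not enough. One must work with a suitable radial averaging of $u_{k,R}$ and invoke (\ref{hypisop}) at every intermediate radius to push the comparison through the division by $\Vol(\srm_r^M)$. A secondary subtlety is the propagation of equality at a single index $k_0$ to the whole Poisson hierarchy on every subball of radius $r\leq R$: one must observe that once the mean curvatures and sphere volumes of $M$ and $M^n_\omega$ agree on $[0,R]$, the same transplantation/uniqueness argument works verbatim on each $B^M_r(o)$, not only at $r=R$.
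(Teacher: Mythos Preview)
Your induction via the maximum principle to obtain the pointwise comparison $\bu_{k,R}^{\omega}\geq(\leq)\,u_{k,R}$ on $\brm_R^M$ is correct and is exactly what the paper does in Theorem \ref{teo:TowMomComp}. The rigidity discussion is also essentially right and matches the paper's use of the strong maximum principle together with the strict inequality $(u_{k,R}^\omega)'<0$ on $(0,R]$.

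The gap is precisely the step you yourself flag as ``the main obstacle'': passing from the pointwise (or Laplacian) comparison to the averaged inequality (\ref{momentscomp2}). Your proposed route---integrate the pointwise bound along concentric spheres and then invoke the isoperimetric inequality (\ref{hypisop}) at each intermediate radius---does not lead to (\ref{momentscomp2}) as stated: (\ref{hypisop}) controls the ratio $\Vol(\brm_r)/\Vol(\srm_r)$, not the ratio $\Vol(\srm_t^M)/\Vol(\srm_R^M)$ versus $\Vol(\srm_t^\omega)/\Vol(\srm_R^\omega)$ that a co-area argument would require. The paper bypasses this entirely with a short divergence-theorem trick that you are missing. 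On $\brm_R^\omega$ one has
\[
\CA_k(\brm_R^\omega)=\int_{\brm_R^\omega}u_{k,R}^\omega=-\tfrac{1}{k+1}\int_{\brm_R^\omega}\Delta^{M_\omega}u_{k+1,R}^\omega=-\tfrac{1}{k+1}\,(u_{k+1,R}^\omega)'(R)\,\Vol(\srm_R^\omega),
\]
so that the quotient $\CA_k(\brm_R^\omega)/\Vol(\srm_R^\omega)$ is \emph{exactly} the boundary normal derivative $-\tfrac{1}{k+1}(u_{k+1,R}^\omega)'(R)$. Then on $\brm_R^M$, using $u_{k,R}=-\tfrac{1}{k+1}\Delta^M u_{k+1,R}$ and the Laplacian comparison $\Delta^M u_{k+1,R}\geq(\leq)\,\Delta^M\bu_{k+1,R}^{\omega}$ (which you have already established), the divergence theorem applied to the \emph{radial} function $\bu_{k+1,R}^{\omega}$ gives
\[
\CA_k(\brm_R^M)\ \leq(\geq)\ -\tfrac{1}{k+1}\,(\bu_{k+1,R}^{\omega})'(R)\,\Vol(\srm_R^M)=\dfrac{\CA_k(\brm_R^\omega)}{\Vol(\srm_R^\omega)}\,\Vol(\srm_R^M).
\]
This is inequality (\ref{momentscomp2}) directly, with no appeal to (\ref{hypisop}) at intermediate radii. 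The equality discussion then traces back through these identities: equality in (\ref{momentscomp2}) forces $\Delta^M u_{k_0+1,R}=\Delta^M\bu_{k_0+1,R}^{\omega}$ on $\brm_R^M$, hence $\bu_{k_0+1,R}^{\omega}=u_{k_0+1,R}$, and then assertion (3) of Theorem \ref{teo:TowMomComp} gives the mean-curvature equality, from which (1)--(4) follow as you outlined.
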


Our second result is a comparison for the Torsional Rigidity of the ball $B^M_R$, and, as in Theorem \ref{teoSec2}, we need that the model space $M^n_\omega$ used in the comparison be {\em balanced from above}.

\begin{theorem}\label{newresult2}[see Theorem \ref{teo:2.1} ]

Let us consider a complete Riemannian manifold $(M^n,g)$ and a balance from above rotationally symmetric model space $(\MW, g_w)$, with center $o_w$, and we shall assume that  given $o \in M$ a point in $M$, the injectivity radius  of $o\in M$ satisfies $inj(o) \leq \inj(o_w)$. Let us fix $R < inj(o) \leq inj(o_w)$ assuming that the pointed inward mean curvatures of metric $r$-spheres satisfies
	
	\begin{equation}\label{eq:meancurvatureconditions}
		\hrm_{\srm_R^\omega(o_\omega)}\leq\,(\geq)\, \hrm_{\srm_R^M(o)}\quad\text{for all}\quad 0 < r \leq R.
	\end{equation}
Then

 \begin{equation}\label{torsrigcomp21}
		\CA_1\left(\brm_{s(R)}^\omega(o_\omega)\right)\geq\,(\leq)\,\CA_1\left(\brm_R^M(o)\right)\, ,
	\end{equation}
	
	\noindent where $\brm_{s(R)}^\omega(o_\omega)$ is the Schwarz symmetrization of $\brm_R^M(o)$ in the model space $(\MW,g_\omega)$.
	
	 Equality in any of  inequalities (\ref{torsrigcomp21}) implies the equality among the radius $s(R)=R$ and that 
	 $$\hrm_{\srm_R^\omega(o_\omega)}=\,\hrm_{\srm_R^M(o)}\,\,\,\text{for all }0<r\leq R$$
	 \noindent and hence, we have the equalities
\begin{enumerate}
	\item Equality $\bu_{k,R}^{\omega}=u_{k,R}$ on $B^M_R(o)$ for all $k\geq 1$, and hence, equality $\bu_{k,r}^{\omega}=u_{k,r}$ on $B^M_r(o)$ for all $k\geq 1$ and for all $0 < r \leq R$.
	\item Equalities $\Vol\left(\brm_r^\omega(o_\omega)\right)=\Vol\left(\brm_r^M(o)\right)$ and $\Vol\left(\srm_r^\omega(o_\omega)\right)=\Vol\left(\srm_r^M(o)\right)\newline \text{for all } 0<r \leq R$.
	\item Equalities $\CA_k\left(\brm_r^\omega(o_\omega)\right)=\CA_k\left(\brm_r^M(o)\right)$,  for all $k\geq 1$ and for all $0<r \leq R$.
	\item Equality $\lambda_1(B^M_r(o))=\lambda_1(B^w_r(o_\omega))$ for all $0<r \leq R$.
	
	\noindent Namely, the Torsional Rigidity determines the Poisson hierarchy, the volume, the $L^1$-moment spectrum and the first Dirichlet eigenvalue of the ball $B^M_r(o)$ for all $0<r \leq R$.

\end{enumerate}

\end{theorem}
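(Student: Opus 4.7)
The plan is to use the mean-curvature hypothesis to transplant the model-space mean exit time to $\brm_R^M(o)$ and apply the maximum principle, obtaining an integral upper bound on $\CA_1(\brm_R^M(o))$; the balance hypothesis then enters through a volume-preserving change of variables that rewrites this bound as the torsional rigidity of the Schwarz symmetrization $\brm_{s(R)}^\omega(o_\omega)$. Throughout I write $V_M(r):=\Vol(\brm_r^M(o))$, $V_\omega(r):=\Vol(\brm_r^\omega(o_\omega))$, and $q_M(r):=V_M(r)/V_M'(r)$, and I focus on the first case $\hrm_{\srm_r^\omega(o_\omega)}\le \hrm_{\srm_r^M(o)}$; the reverse inequalities are handled by sign changes. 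As a preliminary (the content of Section \ref{sec:MeanComp}), (\ref{eq:meancurvatureconditions}) yields the isoperimetric inequality $q_\omega(r)\ge q_M(r)$ on $(0,R]$, and combined with the Euclidean asymptotics $V_\omega(r)/V_M(r)\to 1$ as $r\to 0^+$ this forces $V_\omega\le V_M$ on $[0,R]$, so the Schwarz-symmetrization radius $s(R)$ (defined by $V_\omega(s(R))=V_M(R)$) satisfies $s(R)\ge R$.

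Set $u_\omega(r):=\bu_{1,R}^\omega(r)$, the radial mean exit time of $\brm_R^\omega(o_\omega)$, and consider its transplant $\bar w(x):=u_\omega(r(x))$ on $\brm_R^M(o)$. The polar expression of the Laplacian gives
\[
\Delta^M\bar w=u_\omega''(r)+\hrm_{\srm_r^M(o)}(r,\ov\theta)\,u_\omega'(r)=-1+\bigl(\hrm_{\srm_r^M(o)}-\hrm_{\srm_r^\omega(o_\omega)}\bigr)u_\omega'(r),
\]
and since $u_\omega'=-q_\omega\le 0$, the mean-curvature hypothesis forces $\Delta^M\bar w\le -1=\Delta^M u_{1,R}$. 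Both functions vanish on $\partial \brm_R^M(o)$, so the maximum principle yields $u_{1,R}\le \bar w$ on $\brm_R^M(o)$; integrating, applying the coarea formula and one integration by parts (using $u_\omega(R)=0$) produces
\[
\CA_1(\brm_R^M(o))\;\le\;\int_0^R q_\omega(t)\,V_M(t)\,dt.
\]

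The final and most delicate step converts this bound into $\CA_1(\brm_{s(R)}^\omega(o_\omega))$. With $r_M(u):=V_M^{-1}(u)$ and $r_\omega(u):=V_\omega^{-1}(u)$ for $u\in(0,V_M(R)]$, the substitutions $u=V_M(t)$ and $u=V_\omega(s)$ yield
\[
\int_0^R q_\omega(t)V_M(t)\,dt=\int_0^{V_M(R)}\! u\,q_\omega(r_M(u))\,r_M'(u)\,du,\quad
\CA_1(\brm_{s(R)}^\omega)=\int_0^{V_M(R)}\! u^2\,(r_\omega'(u))^2\,du,
\]
after using $q_\omega(r_\omega(u))=u\,r_\omega'(u)$. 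Since $r_\omega\ge r_M$, the balance hypothesis (monotonicity of $q_\omega$) gives $q_\omega(r_M(u))\le u\,r_\omega'(u)$, while $q_\omega\ge q_M$ yields $u\,r_M'(u)=q_M(r_M(u))\le q_\omega(r_M(u))$, hence $r_M'\le r_\omega'$; multiplying these pointwise bounds and integrating produces $\int_0^R q_\omega V_M\,dt\le \CA_1(\brm_{s(R)}^\omega)$, and combined with the previous step this proves (\ref{torsrigcomp21}). For the equality discussion, every intermediate inequality must be saturated: $\bigl(\hrm_{\srm_r^M(o)}-\hrm_{\srm_r^\omega(o_\omega)}\bigr)u_\omega'(r)\equiv 0$ together with $u_\omega'<0$ on $(0,R]$ forces pointwise equality of the mean curvatures of geodesic $r$-spheres for every $r\le R$, after which items (1)--(4) follow from Theorem \ref{newresult1}, and $s(R)=R$ from the consequence $V_M\equiv V_\omega$ on $[0,R]$. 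The main obstacle I anticipate is precisely this final step: it is the only place the balance assumption is invoked, and combining $r_M'\le r_\omega'$ with $q_\omega\circ r_M\le q_\omega\circ r_\omega$ across the mismatched integration ranges $[0,R]$ and $[0,s(R)]$ has to be handled carefully for the Schwarz-symmetrized bound to close.
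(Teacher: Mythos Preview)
Your argument is correct, and the route you take for the key inequality is genuinely more direct than the paper's. Both proofs begin identically: transplant the model mean exit time, apply the maximum principle to get $u_{1,R}\le \bar w$, and integrate to obtain $\CA_1(\brm_R^M)\le\int_0^R q_\omega(t)V_M(t)\,dt$. The divergence is in how this last integral is compared with $\CA_1(\brm_{s(R)}^\omega)$. The paper builds the full Schwarz symmetrization machinery (Definitions~\ref{def:Symm}--\ref{defR}, Lemma~\ref{prop:Rfuncsymm}, Theorem~\ref{prop:Propietatssymmobjects}), symmetrizes the transplanted function $\E_R^\omega$ to a radial $\E_R^{\omega*}$ on $\brm_{s(R)}^\omega$, invokes equimeasurability (Theorem~\ref{cor:eqschwarz}) to preserve the integral, and then proves pointwise $\E_R^{\omega*}\le E_{s(R)}^\omega$ via a level-set analysis (Proposition~\ref{prop:ineqpsiE}). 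You instead pass directly to the volume variable $u$, writing both sides as integrals over $(0,V_M(R)]$, and observe that the balance condition $q_\omega'\ge 0$ combined with $r_M\le r_\omega$ gives $q_\omega(r_M(u))\le u\,r_\omega'(u)$, while the isoperimetric inequality $q_M\le q_\omega$ gives $r_M'\le r_\omega'$; multiplying these two pointwise bounds closes the estimate in one line. This is precisely the content of the paper's Proposition~\ref{prop:ineqpsiE} stripped of the symmetrization language, and it makes transparent why balance from above enters exactly once. What the paper's approach buys is a pointwise comparison $\E_R^{\omega*}\le E_{s(R)}^\omega$ (not just the integrated version), which could be useful elsewhere; what yours buys is economy. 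Two cosmetic remarks: your Laplacian formula is missing the factor $(n-1)$ in front of the mean-curvature term (compare equation~(\ref{eq:MeanTransLapSubstituida})), and in the equality case your appeal to Theorem~\ref{newresult1} is really to the implication ``$\hrm_{\srm_r^\omega}=\hrm_{\srm_r^M}\Rightarrow$ (1)--(4)'' buried in its equality statement; in the paper this is handled by Proposition~\ref{prop3.2} together with Theorem~\ref{teo:TowMomComp} and Corollary~\ref{cor:MeanComp2}.
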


As a consequence of the proof of Theorem 1.1 in \cite{McMe}, Theorem \ref{isoptors},  and volume inequalities given in Theorem \ref{teo:MeanComp}, we have the following Cheng's Dirichlet eigenvalue comparison, following \cite{BM}, (Theorem \ref{th_const_below2} in Section 5). In this case, we have proved that the first Dirichlet eigenvalue of $B^M_R$ determines its Poisson hierarchy, its  volume and its $L^1$-moment spectrum.

\begin{theorem}\label{ChengMean}[see Theorem \ref{th_const_below2}]
Let $(M^n,g)$ be a complete Riemannian manifold and let $(\MW,g_\omega)$ be a rotationally symmetric model space with center $o_\omega \in \MW$. Let $o \in M$ be a point in $M$ and let us suppose that  $inj(o) \leq inj(o_\omega)$. Let us consider a metric ball $B^M_R(o)$, with $R < inj(o) \leq inj(o_w)$. Let us suppose moreover that the pointed inward mean curvatures of the geodesic spheres in $M$ and $M_{\omega}$ satisfies

	\begin{equation}\label{eq:meancurvatureconditions12}
		\hrm_{\srm_R^\omega(o_\omega)}\leq\,(\geq)\, \hrm_{\srm_R^M(o)}\quad\text{for all}\quad 0 < r \leq R.
	\end{equation}
	
	Then we have the inequalities
 \begin{equation}\label{ineqleq_submanifold2}
\lambda_1 (B^\omega_R(o_\omega))\, \leq\,(\geq)   \,   \lambda_1(B^M_R(o))\, .\end{equation}
 
\noindent  Equality in any of these inequalities implies that 
$$\hrm_{\srm_R^\omega(o_\omega)}=\,\hrm_{\srm_R^M(o)}\,\,\,\text{for all }0<r\leq R$$
\noindent and hence, we have the equalities

\begin{enumerate}
	\item Equality $\bu_{k,R}^{\omega}=u_{k,R}$ on $B^M_R(o)$ for all $k\geq 1$, and hence, equality $\bu_{k,r}^{\omega}=u_{k,r}$ on $B^M_r(o)$ for all $k\geq 1$ and for all $0 < r \leq R$.
	\item Equalities $\Vol\left(\brm_r^\omega(o_\omega)\right)=\Vol\left(\brm_r^M(o)\right)$ and $\Vol\left(\srm_r^\omega(o_\omega)\right)=\Vol\left(\srm_r^M(o)\right) \newline \text{for all }\, 0<r \leq R$.
	\item Equalities $\CA_k\left(\brm_r^\omega(o_\omega)\right)=\CA_k\left(\brm_r^M(o)\right)$,  for all $k\geq 1$ and for all $0<r \leq R$.

	\end{enumerate}
\noindent Namely, the first Dirichlet eigenvalue determines  the Poisson hierarchy, the volume, and  the $L^1$-moment spectrum of the ball $B^M_r(o)$ for all $0<r \leq R$.

  \end{theorem}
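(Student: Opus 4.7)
The plan is to combine the averaged $L^1$-moment comparison of Theorem \ref{isoptors} with the asymptotic identity (\ref{DirMoment2}) to obtain the main inequality, and then to upgrade $\lambda_1$-equality to equality of the radial mean curvatures through a Barta-type transplantation argument in the spirit of \cite{BM}. Once the mean curvatures coincide, items (1)--(3) reduce to the equality parts of Theorem \ref{teo:MeanComp} and Theorem \ref{newresult1}.

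For the inequality, I would first invoke Theorem \ref{isoptors} to obtain, under the hypothesis (\ref{eq:meancurvatureconditions12}),
$$\frac{\CA_k(B^\omega_R(o_\omega))}{\Vol(S^\omega_R(o_\omega))} \ge\,(\le)\, \frac{\CA_k(B^M_R(o))}{\Vol(S^M_R(o))},\qquad k\ge 1.$$
The eigenfunction expansion $\CA_k(\Omega)/k!\sim c(\Omega)/\lambda_1(\Omega)^{k+1}$ underlying (\ref{DirMoment2}) yields the equivalent form $\lambda_1(\Omega) = \lim_{k\to\infty}(k!/\CA_k(\Omega))^{1/k}$. Since $C^{1/k}\to 1$ for any positive constant $C$, the sphere-volume normalization is absorbed in the limit and the averaged comparison transfers directly to $\lambda_1(B^\omega_R) \le (\ge) \lambda_1(B^M_R)$.

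For the equality case, assume $\lambda_1(B^\omega_R) = \lambda_1(B^M_R) =: \lambda$ and transplant the positive radial first Dirichlet eigenfunction $\phi^\omega(r)$ of the model ball to the radial function $\phi(x) := \phi^\omega(r(x))$ on $B^M_R(o)$. Using the polar expression $\Delta^M f(r) = f''(r) + H_{S^M_r}(x)\,f'(r)$ for radial $f$, and subtracting the eigenvalue equation satisfied by $\phi^\omega$ in the model, one derives the pointwise identity
$$-\frac{\Delta^M\phi(x)}{\phi(x)} = \lambda - \frac{(\phi^\omega)'(r(x))}{\phi^\omega(r(x))}\bigl(H_{S^M_r}(x) - H_{S^\omega_r}\bigr).$$
A standard Hopf-type argument for radial eigenfunctions yields $(\phi^\omega)'(r) < 0$ on $(0,R]$ and $\phi^\omega > 0$ on $[0,R)$, so the hypothesis $H_{S^\omega_r}\le H_{S^M_r}$ produces $-\Delta^M\phi \ge \lambda\phi$ pointwise (the reverse case being symmetric). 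Testing with the first Dirichlet eigenfunction $\psi^M > 0$ of $B^M_R$ via Green's formula, with boundary terms vanishing, gives
$$\int_{B^M_R}\psi^M\bigl(-\Delta^M\phi-\lambda\phi\bigr)\,dV = 0.$$
Since the integrand is non-negative and $\psi^M > 0$, we deduce $-\Delta^M\phi = \lambda\phi$ identically. Substituting back forces $(\phi^\omega)'(r)\bigl(H_{S^M_r}(x) - H_{S^\omega_r}\bigr)\equiv 0$, and since $(\phi^\omega)' < 0$ on $(0,R]$ we conclude $H_{S^M_r}(x) = H_{S^\omega_r}$ for all $0 < r \le R$ and all $x \in S^M_r$.

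With the radial mean curvatures coinciding on $(0,R]$, the Jacobi-type ODE for the Jacobian of $\exp_o$ and the radial PDEs defining the Poisson hierarchy $u_{k,r}$ agree in both balls, so the equality parts of Theorem \ref{teo:MeanComp} and Theorem \ref{newresult1} deliver items (1)--(3). The main obstacle is precisely the equality case: the identity $\lambda_1 = \lim(k!/\CA_k)^{1/k}$ is insensitive to multiplicative constants in the moment sequence, so $\lambda_1$-equality alone cannot force equality of any individual moment; this is exactly why the Barta-style pointwise argument, following \cite{BM}, is needed to upgrade eigenvalue-equality to mean-curvature equality before the earlier comparison results can be brought to bear.
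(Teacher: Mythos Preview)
Your proof is correct and takes a genuinely different route from the paper's for both the inequality and the equality case.

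For the inequality, the paper uses the McDonald--Meyers characterization $\lambda_1(D)=\sup\{\eta\ge 0:\limsup_k (\eta/2)^k\CA_k(D)/\Gamma(k+1)<\infty\}$, feeds in the averaged moment comparison of Corollary~\ref{isoptors} together with the volume inequality $\Vol(S^M_R)/\Vol(S^\omega_R)\ge 1$ from Corollary~\ref{cor:MeanComp}, and obtains the chain $\lambda_1(B^M_R)\ge \frac{\Vol(S^M_R)}{\Vol(S^\omega_R)}\lambda_1(B^\omega_R)\ge \lambda_1(B^\omega_R)$. You instead pass directly to the limit via the root form $\lambda_1=\lim_k(k!/\CA_k)^{1/k}$, correctly observing that the sphere-volume normalization disappears in the $k$-th root; this is shorter and avoids the separate appeal to the volume comparison. (A minor slip: in your Laplacian formula the mean-curvature term should carry the factor $(n-1)$, as in Proposition~\ref{lapmean}, but this does not affect the argument.)

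For the equality case the two arguments diverge more substantially. The paper reads off equality in the chain above, forcing $\Vol(S^M_R)=\Vol(S^\omega_R)$ and then invoking the rigidity in Corollary~\ref{cor:MeanComp} to obtain $H_{S^M_r}=H_{S^\omega_r}$. Your approach is independent of that chain: you transplant the radial first eigenfunction $\phi^\omega$ of the model ball, compute $-\Delta^M\phi-\lambda\phi=(n-1)(H_{S^\omega_r}-H_{S^M_r})(\phi^\omega)'\ge 0$, and then pair against the positive first eigenfunction of $B^M_R$ via Green's identity (both boundary terms vanish since $\psi^M$ and $\phi$ are zero on $S^M_R$). This is essentially Barta's lemma in integrated form, in the spirit of \cite{BM}, and it yields $H_{S^M_r}\equiv H_{S^\omega_r}$ directly because $(\phi^\omega)'<0$ on $(0,R]$ (which you justify via $(\,(\phi^\omega)'\omega^{n-1})'=-\lambda\phi^\omega\omega^{n-1}<0$). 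Once the mean curvatures agree, items (1)--(3) follow from Proposition~\ref{prop3.2}, Theorem~\ref{teo:TowMomComp} and Corollary~\ref{cor:MeanComp2}, as you indicate. Your equality argument is self-contained and does not rely on extracting equality information from the limit formula, which---as you correctly note---is insensitive to multiplicative constants and therefore cannot by itself pin down any single moment; this makes your route arguably cleaner at this step.
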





The characterizations of equalities in both theorems are ultimately based on a rigidity property satisfied by the Poisson hierarchy for $B^M_R$, $\{u_{k,R}\}_{k=1}^\infty$ in a Riemannian manifold $M$ under the hypotheses depicted above. This rigidity property can be summarized  by saying that the value at {\em one point} $ p \in B^M_R$ of one of the functions $u_{k,R}$ of the Poisson hierarchy determines it {\em entirely} on the geodesic ball $B^M_R$, ( see Proposition \ref{prop3.2} and  assertions (3) and (4) in Theorem \ref{teo:TowMomComp} in Section \ref{sec:TorRidCom1}).


\subsection{Example}\label{ex}\
	
	We remark that, under the bounds on the sectional curvatures of the manifold as hypothesis and if we have the equality with the corresponding bound in the model space of any of our invariants defined on the geodesic ball $B^M_R$, (namely, the Poisson hierarchy, the averaged $L^1$-moment spectrum, or the torsional rigidity), then $B^M_R$ is isometric to the geodesic balls in the model space, $B^w_R \subseteq M^n_w$. However, the equality of the mean curvature of distance spheres in the Riemannian manifold $M$, with its radial bound given by the mean curvature of distance spheres in the model space $M_\omega$ does not imply the isometry  among the geodesic balls, as in the previous case.
	
	This observation is coherent with the fact that bounds on the sectional curvatures of the manifold implies bounds for the mean curvature of its geodesic spheres, namely, if (M,g) is a Riemannian manifold with radial sectional curvatures 
	$$K_{sec,g}(\frac{\partial}{\partial r}, \, ) \leq \, (\geq)\,K_{sec,g_{w}}(\frac{\partial}{\partial r}, \, )=-\frac{w''(r)}{w(r)}$$
	\noindent  then we have that 
	$$H_{S^M_{r}}  \geq\,(\leq)\, H_{S^w_{r}}=\frac{w'(r)}{w(r)}\, .$$

	These implications follows from the observation that the mean curvature of geodesics spheres is the Laplacian of the distance from its center in the manifold, (see Proposition \ref{lapmean}), together the Hessian comparison analysis of the distance function as it can be found in \cite{GreW}, \cite{MM} or \cite{Pa3}.
	
	However, in the paper \cite{BM}, the authors exhibit in Example 3.1 in Section 3, smooth complete and rotationally symmetric metrics $g$ on $\erre^n$ with radial sectional curvatures bounded from below, $K_{sec, g}(\frac{\partial}{\partial r},\,) \geq b$ outside a compact set and such that the distance spheres $S^{(\erre^{n}, g)}_t$ have mean curvature $H_{S^{(\erre^{n}, g)}_{t}} \geq H_{S^{w_{b}}_{t}}$.
	
	In the following, we are going to present a new example which shows that bounds on the mean curvature of geodesic spheres of the manifold does not imply that the sectional curvatures of the manifold are controlled.

 Let $(\mathbb{R}^2,g)$ be a Riemannian manifold such that its metric tensor expressed in polar coordinates is given by $g=dr^2+\omega^2(r,\theta)d\theta^2$, where $\omega:\mathbb{R}^2\to\mathbb{R}$ is a positive smooth function given by

\begin{equation}\label{eq:metricaexemple}
	\omega(r,\theta)=r\left(1+\dfrac{r^2}{1+r^2\cos^2\theta}\right).
\end{equation}

On the other hand, we consider as a model space the simply connected space form $(\mathbb{R}^2,g_{{\rm can}})$ of constant sectional curvature $b=0$.

We are going to see that the mean curvatures of the geodesic spheres ${\rm S}^{(\erre^{2}, g)}_t(\vec{0})$ of $(\mathbb{R}^2,g)$ centered at $\vec{0}$ with radius $t$, are bounded from below by the mean curvatures of the geodesic spheres ${\rm S}_t^{\omega_0}(\vec{0})$ of $(\mathbb{R}^2,g_{\rm can})$ centered at $\vec{0}$ with the same radius, namely, that 
$${\rm H}_{{\rm S}^{(\erre^{2}, g)}_t}\geq{\rm H}_{{\rm S}_t^{\omega_0}}$$

As ${\rm H}_{{\rm S}^{(\erre^{2}, g)}_t}(t,\theta)=\dfrac{\frac{\partial\omega}{\partial t}(t,\theta)}{\omega(t,\theta)}$ and we have
\begin{equation*}
	\begin{split}
		\dfrac{\partial \omega}{\partial t}(t,\theta) & =1+\dfrac{t^2}{1+t^2\cos^2\theta}+\dfrac{2t^2}{(1+t^2\cos^2\theta)^2}		
	\end{split}
\end{equation*}
we obtain

\begin{equation*}
		{\rm H}_{{\rm S}^{(\erre^{2}, g)}_t}(t,\theta)  =\dfrac{\frac{\partial\omega}{\partial t}(t,\theta)}{\omega(t,\theta)}=
		 \dfrac{1}{t}+\dfrac{2t}{(1+\frac{t^2}{1+t^2\cos^2\theta})(1+t^2\cos^2\theta)^2}
\end{equation*}

\noindent But  
$$\dfrac{2t}{(1+\frac{t^2}{1+t^2\cos^2\theta})(1+t^2\cos^2\theta)^2}\geq 0\,\,\text{for all}\,\,(t,\theta)\in (0,+\infty)\times[0,2\pi)$$ 

Hence we have that 
$${\rm H}_{{\rm S}^{(\erre^{2}, g)}_t}(t,\theta)\geq \dfrac{1}{t}={\rm H}_{{\rm S}_t^{\omega_b}}(t)\,\,\text{for all}\,\,(t,\theta)\in (0,+\infty)\times[0,2\pi)$$

Now, let us consider the unique $2$-plane tangent to a point $(t,\theta)\in\R^2$ generated by the coordinate vector fields $\left\{\frac{\partial}{\partial r},\frac{\partial}{\partial\theta}\right\}$. We are going to compute the sectional curvature of $(\mathbb{R}^2,g)$ at this point and we will see that  it is not bounded by the corresponding sectional curvature of $(\mathbb{R}^2,g_{\rm can})$, i.e., we will show that $K_{sec,g}(t,\theta)$ is not bounded from below by $0$.

As $K_{sec,g}(t,\theta)=-\dfrac{\frac{\partial^2\omega}{\partial t^2}(t,\theta)}{\omega(t,\theta)}$ then, it is straightforward to check that

\begin{equation*}
		K_{sec,g}(t,\theta) =-\dfrac{\frac{\partial^2\omega}{\partial t^2}(t,\theta)}{\omega(t,\theta)}=
		\dfrac{2(t^2\cos^2\theta-3)}{(1+t^2\cos^2\theta)^2(1+t^2+t^2\cos^2\theta)}
\end{equation*}

Thus, for $\theta=0$, we have that

\begin{equation*}
	K_{sec,g}(t,0)=\dfrac{2(t^2-3)}{(1+t^2)^2(1+2t^2)}
\end{equation*}

This shows that there are points $(t,\theta)\in\R^2$ where  the sectional curvature of $(\mathbb{R}^2,g)$ is bounded either below or above by $0$ which is the sectional curvature of $(\mathbb{R}^2,g_{{\rm can}})$.

\subsection{Outline}
After the Introduction, Section \ref{sec:preliminars} is devoted to the presentation of preliminary concepts, including the rotationally symmetric model spaces used to construct the bounds and the notion of Schwarz symmetrization based on these models. We have stated and proved, for the sake of completeness, all the properties of these symmetrizations we need in our context. Next Section \ref{sec:MeanComp} deals with the properties of the mean exit time function defined on the geodesic $R$-balls in a complete Riemannian manifold satisfying our hypotheses and its relation with its volume and the isoperimetric inequalities satisfied by these domains, (Proposition \ref{prop3.2}, Theorem \ref{teo:MeanComp}, Corollary \ref{cor:MeanComp} and Corollary \ref{cor:MeanComp2}) . In Section \ref{sec:TorRidCom1} we have stablished bounds for the Poisson hierarchy and the averaged $L^1$-moment spectrum of a geodesic $R$-ball under our restricitions (Theorem \ref{teo:TowMomComp} and  Corollary \ref{isoptors}) , and we have bounded too the Torsional Rigidity of a geodesic $R$-ball by means its Schwarz symmetrization, (Theorem  \ref{teo:2.1}). Finally, in Section \ref{sec:TorRidCom2}, we have showed a Cheng's comparison for the first Dirichlet eigenvalue of geodesic balls, (Theorem \ref{th_const_below2}), and we have stablished the relation between the first Dirichlet eigenvalue of geodesic balls,  its $L^1$-moment spectrum and its Poisson hierarchy in Corollary \ref{th_const_below3}.

\subsection{Acknowledgements}\

We thanks V. Gimeno his useful help.

\section{Preliminaries and comparison setting}\label{sec:preliminars}\

\bigskip

We are going to present some previous notions and results that will be instrumental in our work.

\subsection{Polar coordinates and the Laplacian on a Riemannian manifold}\

\begin{definition}
Let us consider a complete Riemannian manifold $(M^n, g)$ and a point $o \in M$. Let us denote as $Cut(o)$ the cut locus of $o\in M$ and as $inj(o)=dist_M(o, Cut(o))$ the injectivity radius of the point $o \in M$. We shall denote too as $\SN_1 \subseteq \erre^n$ the unit sphere with center $\vec{0} \in \erre^n$.

We define, in the set $M \sim (Cut(o) \cup \{o\})$, the {\em polar coordinates} of any point $x \in M \sim (Cut(o) \cup \{o\})$ as the pair $(r(x), \ov{\theta}) \in (0, inj(o))\times \SN_1$, where $r(x):=r_o(x)= dist_M(o,x)$ is the distance from $o$ to $x$ realized by the shortest geodesic between these points which starts at $o$ with direction $\ov{\theta} \in \SN_1$. 
\end{definition}

The Riemannian metric $g$ in $M \sim (Cut(o) \cup \{o\})$ has in the polar coordinates the form
$$g=dr^2+\sum_{i,j=1}^{n-1}g_{i,j}(r,\ov{\theta}) d\theta^i d\theta^j\, ,$$
\noindent where $\ov{\theta}\equiv (\theta_1,...,\theta_{n-1}) \in \SN_1$ is a system of local coordinates in $\SN_1$ and  $g_{i,j}(r,\ov{\theta})=g\left(\left.\frac{\partial}{\partial \theta_i}\right\rvert_{(r,\ov{\theta})},\left.\frac{\partial}{\partial \theta_j}\right\rvert_{(r,\ov{\theta})}\right)$.

Thus, the matrix form of the metric $g$ in polar coordinates is a positive definite matrix given by
\begin{equation*}
	\mathfrak{G}=\left(\begin{array}{@{}c|c@{}}
		1 & \begin{matrix}
			0 & \cdots & 0
		\end{matrix} \\
		\hline
		\begin{matrix}
			0\\ \vdots\\ 0
		\end{matrix} &
		G
	\end{array}\right)\, ,
\end{equation*}
where $G$ is the matrix which elements are $g_{ij}$, \emph{i.e.}, $G=\left(g_{ij}\right)_{i,j\in\{1,\dots,n-1\}}$. And hence, we have, for any point $(r,\ov{\theta})\in M-\left(Cut(o)\cup\{o\}\right)$, that
\begin{equation*}
	\sqrt{\det\left(\mathfrak{G}(r,\ov{\theta})\right)}=\sqrt{\det\left(G(r,\ov{\theta})\right)}.
\end{equation*}

Then, (see for example \cite{Gri}, \cite{Cha1}), the Laplace operator of $M$ has the following expression in the polar coordinates
\begin{equation}\label{lapeq}
\Delta^M = \frac{\partial }{\partial r^2}+\frac{\partial}{\partial r}\big(log\DGr \big)\frac{\partial}{\partial r}+\Delta^{S^M_r(o)}\, ,
\end{equation}
\noindent where $\Delta^{\S^M_r(o)}$ is the Laplace operator in the geodesic sphere $S^M_r(o) \subseteq M$.

\begin{remark}
Along all the paper, given $o \in M$ and as long as $R <inj(o)$, we will use indistinctly the terms {\em  geodesic ball}, {\em geodesic sphere}, {\em metric ball}, {\em metric sphere}, {\em distance ball} and {\em distance sphere} to name the sets $B^M_R(o)$ and $S^M_R(o)$ respectively.
\end{remark}

Using this result we have the following

 \begin{proposition}\label{lapmean}
 Let $(M^n,g)$ be a complete Riemannian manifold and let $o\in M$ be a point of $M$. Then the normalized mean curvature vector field of the geodesic sphere $S^M_t(o)$, is given by
 $$\vec{H}_{S^M_t(o)}=-H_{S^M_t(o)}\nabla^M r\, ,$$
 \noindent where
 
 $$\hrm_{\srm_t^M}=\frac{1}{n-1}\Delta^M r(\gamma(t))=\frac{1}{n-1}\dfrac{\frac{\partial}{\partial t}\DG}{\DG}\,\,\forall t>0$$
 \noindent is the pointed inward mean curvature of $S^M_t(o)$ and $\gamma(t)$ is a unit geodesic starting at the point $o \in M$. 
 \end{proposition}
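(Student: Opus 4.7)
The plan is to apply the Laplacian formula in polar coordinates from equation~(\ref{lapeq}) to the distance function $r$ itself, and then identify the resulting scalar with $(n-1)$ times the pointed inward mean curvature of the level set $\{r=t\} = S^M_t(o)$.

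First I would set up polar coordinates centered at $o$ on the open set $M\setminus(\mathrm{Cut}(o)\cup\{o\})$, in which the Gauss lemma gives $|\nabla^M r|=1$, so $\partial/\partial r=\nabla^M r$ is a unit vector field orthogonal to every geodesic sphere $S^M_t(o)$. This vector field is the outward unit normal, and so, under the convention that the mean curvature vector is written $\vec H_{S^M_t(o)}=-H_{S^M_t(o)}\nabla^M r$, a positive $H_{S^M_t(o)}$ indeed corresponds to the inward-pointing mean curvature vector (consistent with the Euclidean prototype where $H=1/t$).

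Next, I would evaluate~(\ref{lapeq}) on the function $r$. Since $\partial^2 r/\partial r^2=0$, $\partial r/\partial r = 1$, and $r$ is constant on each geodesic sphere $S^M_r(o)$ (hence $\Delta^{S^M_r(o)}r=0$), only the middle term survives, giving
\begin{equation*}
\Delta^M r \;=\; \frac{\partial}{\partial r}\bigl(\log\DGr\bigr) \;=\; \frac{\tfrac{\partial}{\partial r}\DGr}{\DGr}.
\end{equation*}

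Finally, I would relate $\Delta^M r$ to the mean curvature. Since the integral curves of $\partial/\partial r$ are the unit-speed radial geodesics emanating from $o$, one has $\nabla^M_{\partial_r}\partial_r=0$. Splitting the total trace that defines $\Delta^M r=\mathrm{tr}\,(\Hess^M r)$ into the radial direction and the $(n-1)$ directions tangent to $S^M_t(o)$, the radial contribution $g(\nabla^M_{\partial_r}\nabla^M r,\partial_r)$ vanishes, leaving exactly the tangential trace of the map $X\mapsto \nabla^M_X\nabla^M r$ on $T S^M_t(o)$. This tangential trace is, by definition, the trace of the Weingarten map of $S^M_t(o)$ with respect to the inward unit normal $-\nabla^M r$, which equals $(n-1)H_{S^M_t(o)}$. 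Therefore
\begin{equation*}
H_{S^M_t(o)}=\frac{1}{n-1}\Delta^M r\bigl(\gamma(t)\bigr),
\end{equation*}
and combining with the previous display yields the stated identity. The only delicate point is the sign bookkeeping in the Weingarten/Hessian identification, which is the step I expect to need the most care, but it is then routine once the inward-normal convention is fixed as above.
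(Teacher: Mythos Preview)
Your proof is correct and follows essentially the same approach as the paper: both identify $\Delta^M r$ with $(n-1)H_{S^M_t(o)}$ via the trace of the shape operator (you do it by splitting the Hessian trace into radial and tangential parts, the paper writes it as $\Div^M(\nabla^M r)=-\tr L_{\nabla^M r}$), and both then read off the determinant expression from equation~(\ref{lapeq}) applied to $r$. The only difference is the order of presentation, not the substance.
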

 \begin{proof}
 The proof is straightforward taking $\{\vec{e}_i(t)\}_{i=1}^n$ an orthonormal basis of $T_{\gamma(t)}\srm_t^M$, with $\vec{e}_n(t)=\nabla^Mr(\gamma(t))$, the unit normal to $\srm_t^M$ at $\gamma(t)$, pointed outward. Then,  after some computations,
 \begin{equation}
 \begin{aligned}
 \vec{H}_{S^M_t}&=\frac{1}{n-1} \big(\tr L_{\nabla^M r}\big) \nabla^M r=-\frac{1}{n-1} \Div^M(\nabla^Mr)\\&=-\frac{1}{n-1} \Delta^M r(\gamma(t))\nabla^Mr(\gamma(t))
 \end{aligned}
 \end{equation}
 \noindent so $$H_t=\langle \vec{H}_{S^M_t},-\nabla^Mr(\gamma(t))\rangle=\frac{1}{n-1}\Delta^Mr(\gamma(t)).$$
 \noindent The result follows now using equation (\ref{lapeq}).
 \end{proof}
 
 Given a domain (connected open set) $D$ in $M$, a function $u\in C^2(D)$ is \emph{harmonic} (resp. \emph{subharmonic}) if $\Delta^M u=0$ (resp. $\Delta^M u\geq 0$) on $D$. We gather the strong maximum principle and the Hopf boundary point lemma for subharmonic functions in the next statement.

\begin{theorem}
\label{th:mp}
Let $D$ be a smooth domain of a Riemannian manifold $M$. Consider a subharmonic function $u\in C^2(D)\cap C(\overline{D})$. Then, we have:
\begin{itemize}
\item[(i)] if $u$ achieves its maximum in $D$ then $u$ is constant,
\item[(ii)] if there is $p_0\in\partial D$ such that $u(p)<u(p_0)$ for any $p\in D$ then $\frac{\partial u}{\partial \nu}(p_0)>0$, where $\nu$ denotes the outer unit normal along $\partial D$.
\end{itemize}
\end{theorem}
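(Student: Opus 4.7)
My plan is to adapt E. Hopf's classical barrier arguments to the Riemannian setting, using the expression \eqref{lapeq} of $\Delta^M$ in geodesic polar coordinates and the identification of $\Delta^M r$ with $(n-1)\hrm_{\srm_t^M}$ given in Proposition \ref{lapmean}. For part (i), I would argue by contradiction. Set $M_0 := \sup_{\overline D} u$ and suppose $u$ attains $M_0$ at an interior point while failing to be identically $M_0$. The set $S = \{p \in D : u(p) = M_0\}$ is then non-empty, closed in $D$, and proper; using a small ball around a point of $S$, I can pick $q \in D \setminus S$ close enough to $S$ so that $d_0 := \operatorname{dist}_M(q,S) < \operatorname{dist}_M(q,\partial D)$ and $d_0 < \inj(q)$. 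Then $\overline{B^M_{d_0}(q)} \subset D$ avoids $\Cut(q)$, $u<M_0$ on $B^M_{d_0}(q)$, and at least one point $p\in \partial B^M_{d_0}(q)$ satisfies $u(p) = M_0$.

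The barrier is the radial function $h(r) = e^{-\alpha r^2} - e^{-\alpha d_0^2}$ defined on the annulus $A = \{d_0/2 \le r \le d_0\}$, where $r(\cdot) = \operatorname{dist}_M(q,\cdot)$. By \eqref{lapeq}, $\Delta^M h = h''(r) + (\Delta^M r)\,h'(r) = e^{-\alpha r^2}\bigl[4\alpha^2 r^2 - 2\alpha - 2\alpha r\,\Delta^M r\bigr]$; since $\Delta^M r$ is bounded on the compact set $A$ (finite away from the cut locus by Proposition \ref{lapmean}), the quadratic-in-$\alpha$ term dominates for $\alpha$ large, giving $\Delta^M h > 0$ on $A$. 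Consider $v = u + \varepsilon h$: on the inner sphere $\{r = d_0/2\}$, compactness yields $u \le M_0 - \delta$ for some $\delta > 0$, so $v < M_0$ there once $\varepsilon$ is small; on the outer sphere $\{r = d_0\}$, $h \equiv 0$, whence $v = u \le M_0$ with equality at $p$. Since $\Delta^M v > 0$ on $A$, the weak maximum principle places $\max_{\overline A} v$ on $\partial A$, and the previous bounds force it to equal $M_0$ and be attained at $p$. Moving from $p$ into $A$ (i.e.\ in direction $-\partial/\partial r$) cannot increase $v$, hence $\partial v/\partial r(p) \ge 0$, giving $\partial u/\partial r(p) \ge -\varepsilon h'(d_0) = 2\alpha d_0\,\varepsilon e^{-\alpha d_0^2} > 0$. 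However $p$ is an interior point of $D$ and $u(p) = M_0 = \sup_D u$, so $\nabla^M u(p) = 0$ and in particular $\partial u/\partial r(p) = 0$. Contradiction.

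For part (ii), the same construction applies to a ball $B^M_{d_0}(q) \subset D$ provided by the interior sphere condition at $p_0$ (available because $\partial D$ is smooth), chosen so that the outer unit normal $\nu(p_0)$ coincides with $\partial/\partial r\rvert_{p_0}$. On the corresponding annulus, $v = u + \varepsilon h$ again satisfies $\Delta^M v > 0$, $v < u(p_0)$ on the inner sphere (for $\varepsilon$ small, using the strict inequality $u(p) < u(p_0)$ for $p \in D$ and compactness), and $v \le u(p_0)$ on the outer sphere with equality at $p_0$. The weak maximum principle then gives $v \le u(p_0)$ throughout $\overline A$ with equality at $p_0$, so $\partial v/\partial \nu(p_0) \ge 0$ and therefore $\partial u/\partial \nu(p_0) \ge 2\alpha d_0\,\varepsilon e^{-\alpha d_0^2} > 0$, as required. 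The only Riemannian-specific input is the local boundedness of $\Delta^M r$ on compact subsets of the complement of the cut locus, so the main (modest) obstacle is the coordinated choice of $d_0$ small enough that $\overline{B^M_{d_0}(q)}\subset D\setminus \Cut(q)$ and $\alpha$ large enough that $\Delta^M h>0$ on $A$; everything else is a verbatim translation of the Euclidean Hopf argument.
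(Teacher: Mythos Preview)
Your argument is correct: it is the classical Hopf barrier construction, carried out in geodesic polar coordinates via \eqref{lapeq} and Proposition~\ref{lapmean}, and the choices of $q$, $d_0$, $\alpha$ and $\varepsilon$ are all justified as you indicate. The paper itself does not really prove the statement but defers to \cite[Cor.~8.15]{Gri2} for (i) and to \cite[Lem.~3.4]{GT} for (ii); the latter is precisely the Euclidean barrier argument you have transported to the Riemannian setting, so your route is a self-contained execution of what the cited sources do rather than a genuinely different approach.
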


\begin{proof}
The proof of (i) can be found in \cite[Cor.~8.15]{Gri2}. The proof of (ii) can be derived from (i) as in the  Euclidean case \cite[Lem.~3.4]{GT}.
\end{proof}

\subsection{Model Spaces}\label{subsec:Model}\

\medskip

The model spaces $\MW$ are rotationally symmetric spaces defined as follows:

\begin{definition}\label{def:modelspaces}
	(See \cite{Gri},\cite{GreW}) A $\omega$-model $\MW$ is a smooth warped product with base $B^1=[0,R[\subset\R$ (on $0<R\leq\infty$), fiber $F^{n-1}=\SN_1$ (i.e., the unit $(n-1)$-sphere with standard metric), and warping function $\omega:\,[0,R[\rightarrow\R_+\cup\{0\}$ with $\omega(0)=0$, $\omega'(0)=1$, $\omega^{(2k)}(0)=0$ and $\omega(r)>0$ for all $k\in\N^*$ and for all $r>0$, where $\omega^{(2k)}$ denotes the even derivatives of the warping function.
	
	The point $o_\omega=\pi^{-1}(0)$, where $\pi$ denotes the natural projection onto $B^1$, is called \emph{center point} of the model space. If $R=+\infty$, then $o_\omega$ is a pole of $\MW$. We denote as $r=r(x)$ the distance to the pole $o_\omega$ of the point $x \in \MW$.
\end{definition} 

\begin{remark}\label{prop:SpaceForm}
	The simply connected space forms $M^n_{\omega_b}$ of constant sectional curvature $b$ can be constructed as $\omega$-models with any given point as center point using the warping functions
	
	\begin{equation}\label{eq:SpaceForm}
		\omega_b(r)=\begin{cases}
			\dfrac{1}{\sqrt{b}}\sin\left(\sqrt{b}\,r\right),& \quad\text{si }\,b>0,\\
			r, & \quad\text{si }\,b=0,\\
			\dfrac{1}{\sqrt{-b}}\sinh\left(\sqrt{-b}\,r\right), & \quad\text{si }\,b<0. 
		\end{cases}
	\end{equation}
	
	Note that for $b>0$, the warped metric $g_{\omega_b}=dr^2+\omega_b^2(r)g_{\SN_1}$ determined by the function $\omega_b(r)$ admits smooth extension to $r=\pi/\sqrt{b}$. For $b\leq 0$ any center point is a pole.
\end{remark}

In  \cite{O'N}, \cite{GreW}, \cite{Gri} and \cite{MP4}, we have a complete description of these model spaces, including the computation of their sectional curvatures $K_{o_\omega,\MW}$ in the radial directions from the center point $o_\omega$. They are determined by the radial function $K_{o_\omega,\MW}(\sigma_x)=K_\omega(r)=-\frac{\omega''(r)}{\omega(r)}$. Moreover, the normalized inward mean curvature of the distance sphere $S^\omega_r(o_\omega)$ of radius $r$ from the center point,  is, at the point $p=\gamma(r) \in  S^\omega_r(o_\omega)$, where $\gamma(t)$ is the normal geodesic parametrized by arclength joining $o_\omega$ and $p$

\begin{equation}\label{eq:WarpMean}
	H_{S^w_r}(p)=\eta_\omega(r)=\dfrac{\omega'(r)}{\omega(r)}=\dfrac{d}{dr}\ln\left(\omega(r)\right).
\end{equation}

In particular, in \cite{MP4} we introduce, for any given warping function $\omega(r)$, the \emph{isoperimetric quotient $q_\omega(r)$} for the corresponding $\omega$-model space $\MW$ as follows:

\begin{equation}\label{eq:Defq}
	q_\omega(r)=\dfrac{\Vol\left(\brm_r^\omega(o_\omega)\right)}{\Vol\left(\srm_r^\omega(o_\omega)\right)}=\dfrac{\int_0^r\omn(t)\,dt}{\omn(r)}.
\end{equation}

On the other hand, using  equation (\ref{lapeq}), the Laplace operator in $M^n_w$ is given by
\begin{equation}\label{laplacemodel}
\Delta^{M^n_w} = \frac{\partial }{\partial r^2}+(n-1)\frac{w'(r)}{w(r)}\frac{\partial}{\partial r}+\Delta^{S^{\omega}_r(o_\omega)}.
\end{equation}

Then, we have the following results concerning the mean exit time function  of the geodesic $R$-ball $\brm_r^\omega(o_\omega)\subseteq\MW$, (see \cite{MP4}):

\begin{proposition}\label{prop:MeanTorEqualities}
	Let $E_R^\omega$ the solution of the Poisson Problem \eqref{eq:moments1}, defined on the geodesic $R$-ball $\brm_R^\omega(o_\omega)$ in the model space $\MW$. 
	
	Then $E_R^\omega$ is a non-increasing radial function  given by
	
\begin{equation}\label{eq:EwR}
		E_R^\omega(x)=E_R^\omega(r_{o_\omega}(x))=\int_{r_{o_\omega}(x)}^R q_\omega(t)\,dt\, ,
\end{equation}
\noindent  where $r\equiv r_{o_\omega}(x)=dist_{M^n_{\omega}}(o_\omega, x)$ denotes the distance to the center point. Hence, it attains its maximum at $r=0$, with $E_R^\omega{'}(0)=0$ and $E_R^\omega{'}(r) < 0\,\,\,\forall r \in ]0,R]$ .

\end{proposition}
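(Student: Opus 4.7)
The plan is to exploit the rotational symmetry of the model space to reduce the Poisson PDE to a linear second-order ODE in the radial variable, and then to solve it explicitly by recognizing an exact derivative.

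First I would observe that the model space $M_\omega^n$ admits the isometric action of $O(n)$ fixing the center $o_\omega$, so the pullback $E_R^\omega \circ \Phi$ of $E_R^\omega$ by any such isometry $\Phi$ also solves the Poisson problem (\ref{eq:moments1}) on $B_R^\omega(o_\omega)$, with the same Dirichlet boundary data. By uniqueness of solutions of the Dirichlet-Poisson problem (an immediate consequence of the maximum principle, Theorem \ref{th:mp}), the mean exit time is invariant under this action and therefore radial. Write $E_R^\omega(x) = f(r)$ with $r = r_{o_\omega}(x)$, $f \in C^2([0,R])$ and $f(R)=0$. Smoothness of $E_R^\omega$ across the center $o_\omega$ forces $f'(0)=0$ (otherwise the gradient would not extend continuously at the pole).

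Next I would plug $f(r)$ into the polar expression (\ref{laplacemodel}) for the Laplacian on $M_\omega^n$, in which the spherical term vanishes on radial functions, obtaining the ODE
\begin{equation*}
f''(r) + (n-1)\frac{\omega'(r)}{\omega(r)} f'(r) + 1 = 0 \quad \text{on } (0,R).
\end{equation*}
Multiplying through by the integrating factor $\omega^{n-1}(r)$ this becomes
\begin{equation*}
\frac{d}{dr}\bigl[\omega^{n-1}(r) f'(r)\bigr] = -\omega^{n-1}(r).
\end{equation*}
Integrating from $0$ to $r$, and using $\omega^{n-1}(0)=0$ together with the boundary behaviour $f'(0)=0$ (so that the boundary term at $0$ vanishes), I obtain
\begin{equation*}
\omega^{n-1}(r) f'(r) = -\int_0^r \omega^{n-1}(t)\,dt,
\end{equation*}
hence $f'(r) = -q_\omega(r)$ by the definition (\ref{eq:Defq}). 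Integrating once more between $r$ and $R$, and using $f(R) = 0$, gives exactly the claimed formula
\begin{equation*}
E_R^\omega(x) = f(r) = \int_r^R q_\omega(t)\,dt.
\end{equation*}

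Finally, for the monotonicity statement I would read it off the identity $f'(r) = -q_\omega(r)$. Since $\omega(t)>0$ for all $t>0$, the integrand $\omega^{n-1}$ is strictly positive on $(0,r]$, so $q_\omega(r)>0$ and therefore $E_R^{\omega}{'}(r) = -q_\omega(r) < 0$ on $(0,R]$, giving strict radial decrease and, in particular, the maximum at $r=0$. The boundary behaviour $E_R^{\omega}{'}(0)=0$ follows from the Taylor expansion $\omega(t) = t + O(t^3)$ at the origin, which gives $q_\omega(r) = r/n + O(r^3) \to 0$ as $r \to 0^+$. The only subtle point in the whole argument is justifying the boundary condition $f'(0)=0$, which is really the statement that a radial $C^2$ function on $M_\omega^n$ has vanishing radial derivative at the pole; once this is in place the remainder of the proof is a routine ODE computation.
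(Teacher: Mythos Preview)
Your proof is correct. The paper takes a shorter, verification-based route: it simply notes that one can check directly from the polar Laplacian formula \eqref{laplacemodel} that the candidate $E_R(r)=\int_r^R q_\omega(t)\,dt$ satisfies $\Delta^{M^n_\omega}E_R=-1$ with $E_R(R)=0$, and then (implicitly) invokes uniqueness for the Dirichlet--Poisson problem.

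Your approach is genuinely different in that you \emph{derive} the formula rather than verify a given candidate: you first argue radiality via the isometric $O(n)$-action and uniqueness, then reduce to the singular ODE, recognize the integrating factor $\omega^{n-1}$, and integrate twice. This buys you a self-contained explanation of where the formula comes from and makes the monotonicity statements (and the behaviour $E_R^{\omega}{'}(0)=0$) transparent consequences of $f'(r)=-q_\omega(r)$. The paper's approach is more economical but leaves the origin of the formula and the monotonicity claims as implicit exercises. One small remark: the vanishing of the boundary term at $r=0$ after the first integration actually follows already from $\omega^{n-1}(0)=0$ (since $n\geq 2$) together with boundedness of $f'$ near the origin; your appeal to $f'(0)=0$ is not strictly needed there, though it does no harm.
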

\begin{proof}
Using the expresion of the Laplace operator given in equation (\ref{laplacemodel}), it is straightforward to check that $E_R(r)=\int_r^R q_\omega(t)\,dt$ satisfies the 
equation
$$\Delta^{M^n_w} E_R=-1$$
\noindent  with boundary condition $E_R(R)=0$.
\end{proof}



	

\subsection{Balance conditions}\

We present now a purely intrinsic condition on the general model spaces $\MW$, (see  \cite{MP4}), which will play a key role in the last section of the paper:

\begin{definition}
	A given $\omega$-model space $\MW$ is  \emph{balanced from above} if we have the inequality
	
	\begin{equation}\label{eq:balancedabove1}
		q_\omega(r)\eta_\omega(r)\leq\dfrac{1}{n-1},\quad\text{for all } r\geq 0.
	\end{equation}
	\end{definition}

In \cite{MP4} it was proved the following characterization of the balance condition defined previously:

\begin{proposition}\label{prop:balancedequivalences}
Let us consider the $\omega$-model space $\MW$. Then $\MW$ is \emph{balanced from above} if and only if the following equivalent conditions hold:
		
		\begin{align}
			\dfrac{d}{dr}\left(q_\omega(r)\right) &  \geq 0,\label{eq:balancedabovediff}\\
			\omega^n(r) & \geq (n-1)\,\omega'(r)\int_0^r\omn(t)\,dt.\label{eq:balancedabove2}
		\end{align}
	
\end{proposition}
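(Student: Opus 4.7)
The plan is to show that all three conditions are, in fact, algebraically identical after a single application of the quotient rule to $q_\omega$. The key identity I would establish first is the differential relation
\begin{equation*}
\frac{d}{dr} q_\omega(r) \;=\; 1 - (n-1)\, q_\omega(r)\, \eta_\omega(r),
\end{equation*}
which converts monotonicity of the isoperimetric quotient directly into the analytic form of the balance condition.

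First I would differentiate $q_\omega(r) = \frac{\int_0^r \omega^{n-1}(t)\,dt}{\omega^{n-1}(r)}$ with the quotient rule, obtaining
\begin{equation*}
\frac{d}{dr}q_\omega(r) = \frac{\omega^{n-1}(r)\cdot\omega^{n-1}(r) - (n-1)\omega^{n-2}(r)\omega'(r)\int_0^r \omega^{n-1}(t)\,dt}{\omega^{2(n-1)}(r)}.
\end{equation*}
Dividing numerator and denominator by $\omega^{n-1}(r)$ (which is positive for $r>0$) and recognizing the resulting expressions as $1$ and $(n-1)q_\omega(r)\eta_\omega(r)$ respectively, I get the claimed identity. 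From it, the equivalence \eqref{eq:balancedabove1} $\Leftrightarrow$ \eqref{eq:balancedabovediff} is immediate: $\frac{d}{dr}q_\omega(r)\geq 0$ iff $(n-1)q_\omega(r)\eta_\omega(r)\leq 1$ iff $q_\omega(r)\eta_\omega(r)\leq \tfrac{1}{n-1}$.

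Next I would clear denominators in \eqref{eq:balancedabove1}. Using $q_\omega(r)\eta_\omega(r) = \frac{\omega'(r)\int_0^r \omega^{n-1}(t)\,dt}{\omega^{n}(r)}$ and multiplying both sides of $q_\omega(r)\eta_\omega(r)\leq \tfrac{1}{n-1}$ by $(n-1)\omega^n(r)>0$, I obtain exactly \eqref{eq:balancedabove2}. Since the multiplication is by a strictly positive factor for $r>0$, the implication is reversible, giving \eqref{eq:balancedabove1} $\Leftrightarrow$ \eqref{eq:balancedabove2}.

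Since both parts of the argument are elementary manipulations of a single quotient-rule computation, there is no genuine obstacle. The only mild care point is the behavior at $r=0$, where $\omega(0)=0$ forces us to regard $q_\omega$ and $\eta_\omega$ via their limiting values coming from the defining conditions $\omega'(0)=1$ and the even-derivative vanishing in Definition \ref{def:modelspaces}; on the open interval $(0,R)$ all three statements are strictly equivalent pointwise, and the ``for all $r\geq 0$'' quantifiers in \eqref{eq:balancedabove1}, \eqref{eq:balancedabovediff}, \eqref{eq:balancedabove2} must therefore be read in that same pointwise sense.
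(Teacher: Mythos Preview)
Your proof is correct and is the natural direct computation: the quotient-rule identity $q_\omega'(r)=1-(n-1)q_\omega(r)\eta_\omega(r)$ immediately yields the equivalence of \eqref{eq:balancedabove1} and \eqref{eq:balancedabovediff}, and clearing the positive denominator $\omega^n(r)$ gives \eqref{eq:balancedabove2}. The paper does not actually supply a proof of this proposition---it cites \cite{MP4} for the result---so there is no in-paper argument to compare against; your computation is exactly the standard one and would be what any reader reconstructs.
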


Also in \cite{MP4} it were listed several examples of balanced from above $\omega$-model spaces $\MW$. Here we enumerate some of them:

\begin{examples}\

	\begin{enumerate}
	\item Every $\omega_b$-model space $M^n_{\omega_b}=[0,R[\times_{\omega_b}\SN_1$ of constant positive sectional curvature $b>0$ and $R <\frac{\pi}{2\sqrt{b}}$ is  balanced from above. In fact, when $b >0$ and for $r>0$ we have that  (\ref{eq:balancedabove2}) is a strict inequality, (see Lemma 2.4 in \cite{MP3}).
	\item On the other hand, the $\omega_b$-model spaces $M^n_{\omega_b}$ of constant non-positive sectional curvature $b\leq 0$ are  balanced from above too. In fact, when $b <0$, we have that inequality (\ref{eq:balancedabove2}) is equivalent to inequality 
	$$\int_0^r \sinh^{n-1}(\sqrt{-b} t) \leq \frac{\sinh^n(\sqrt{-b} r)}{\sqrt{-b}(n-1)\cosh(\sqrt{-b}r)}$$
	which holds for all $r>0$ because $\tanh^2(\sqrt{-b} r) \leq 1\,\,\forall r>0$. The case $b=0$ is trivial.
	\item Let us consider the $\omega$-model space $M^n_\omega$, being $\omega(t):=t+t^3, t\in [0, \infty)$. This model space is balanced from above.

		\end{enumerate}
\end{examples}

\subsection{Symmetrization into Model Spaces}\label{subsec:Symm}\

\medskip

As in \cite{MP4} we use the concept of \emph{Schwarz-symmetrization} as considered in e.g., \cite{Ba}, \cite{Po}, or, more recently, in \cite{Mc} and \cite{Cha2}. For the sake of completeness, we review and show some facts about this instrumental concept, in the context of Riemannian manifolds.

\begin{definition}\label{def:Symm}
	Let $(M^n, g)$ be a complete Riemannian manifold. Suppose that $\drm \subseteq M$ is a precompact open connected domain in $M^n$. Let $(\MW, g_\omega)$ be a rotationally symmetric model space, with pole $o_\omega \in \MW$. Then the \emph{$\omega$-model space symmetrization of \, $\drm$} is denoted by $\drm^{*_{\omega}}$ and is defined to be the unique $L(D)$-ball in $\MW$, centered at $o_\omega$
	$$D^{*_{\omega}}:=B^\omega_{L(D)}(o_\omega)$$
	\noindent satisfying

	\begin{equation*}
		\Vol(D)=\Vol\left(B^\omega_{L(D)}(o_\omega)\right).
	\end{equation*}
	
	In the particular case that $D$ is a geodesic $R$-ball $\brm_R^M(o)$ in $M$ centered at $o \in M$, then the radius $L(\brm_R^M(o))$ is some increasing function  $s(R)=L(\brm_R^M(o))$ which depends on the geometry of $M$, so we can write
	
	\begin{equation*}
		\brm_R^M(o)^{*_{\omega}}=B^\omega_{s(R)}(o_\omega)
	\end{equation*}

	 \noindent and this symmetrization $B^\omega_{s(R)}(o_\omega)$ satisfies
	 	\begin{equation}\label{symball}
		\Vol(\brm_R^M(o))=\Vol(B^\omega_{s(R)}(o_\omega)).
	\end{equation}
	\end{definition}
	\begin{remark}
	When it is clear from the context, we write  $\drm^{*}$  instead of $\drm^{*_{\omega}}$.
	
	Along the rest of the paper, and if there is not confusion, we shall omit the centers $o \in M$ and  $o_\omega \in M^n_\omega$ when we refer to the balls $\brm_{r}^M(o)$ and $\brm_{r}^\omega(o_\omega)$ and the spheres $\srm_{r}^M(o)$ and $\srm_{r}^\omega(o_\omega)$.
	\end{remark}

\medskip

Given  $f: D \rightarrow \erre^+$ a smooth non-negative function on $\drm$, we are going to introduce the notion of  {\em $\omega$- symmetrization} $f^{*_{\omega}}: \drm^{*_{\omega}}\rightarrow \erre^+$. But first, we will show some useful facts.

\begin{definition}\label{def:simfuncprevi}
	Let $(M^n, g)$ be a complete Riemannian manifold, $D \subseteq M$ a precompact domain in $M$ and $f:\drm\subseteq M\longrightarrow\R^+$ a smooth non-negative function on $\drm$. For $t\geq 0$ we define the sets
	\begin{equation*}
		\drm(t):=\{x\in\drm\,|\,f(x)\geq t\}\subseteq M
	\end{equation*}
	\noindent and
	\begin{equation*}
		\Gamma(t):=\{x\in\drm\,|\,f(x)=t\}.
	\end{equation*}
\end{definition}

\begin{remark}\

\begin{enumerate}
\item The set $D(t)$ is precompact for all $t\geq 0$  and moreover, $\partial\drm(t)=\Gamma(t) \subseteq D(t)$. 
\item Note too that $D(0)=D$ and that if $t_1\leq t_2$ then $\drm(t_2)\subseteq\drm(t_1)$. 
\item  If $T:= \sup_{x \in D} f(x)$, then $D(t)= \emptyset \,\,\forall t > T$, and hence, $\Vol(D(t))=0\,\,\forall t \geq T$. 
\item Therefore, we have a family of nested sets $\{D(t)\}_{t \in [0,T]}$ that covers $D$.

\end{enumerate}
\end{remark}

Now, we define the {\em symmetrization} of a function:

\begin{definition}\label{def:simfunc}
	Let $(M^n, g)$ be a complete Riemannian manifold, $D \subseteq M$ a precompact domain in $M$ and $f:\drm\subseteq M\longrightarrow\R^+$ a smooth non-negative function on $\drm$. Let $(\MW, g_\omega)$ be a rotationally symmetric model space.	Then the \emph{$\omega$-symmetrization of $f$} is the function $f^{*_{\omega}}:\drm^{*_{\omega}}\longrightarrow\R$ defined, for all $x^* \in  \drm^{*_{\omega}}$, by
	
	\begin{equation*}
		f^{*_{\omega}}(x^*)=\sup\{t\geq 0\,|\, x^*\in \drm(t)^{*_\omega}\}.
	\end{equation*}
	Note that the symmetrization $f^{*_{\omega}}$ ranges on $[0,T]$, namely, $f^{*_{\omega}}: D^{*_{\omega}} \rightarrow [0,T]$, where $T:= \sup_{x \in D} f(x)$.
\end{definition}
\begin{remark}\
\begin{enumerate}
	\item When it is clear from the context, we write  $f^{*}$  instead of $f^{*_{\omega}}$ and $D^*$ instead of $D^{*_{\omega}}$.
	\item By Sard's theorem, if $D_f \subseteq D$ denotes the set of critical points of $f$, the set $S_f =f(D_f)\subseteq [0,T]$ of critical values of $f$ has null measure, and the set of regular values of $f$, $R_f=[0,T]\sim S_f$ is open and dense in $[0,T]$. In particular, for any $t \in R_f$, the set $\Gamma(t)=\{x\in\drm\,|\,f(x)=t\}$ is a smooth embedded hypersurface in $D$ and $\Vert \nabla^M f\Vert$ does not vanish along $\Gamma(t)$.
	\end{enumerate}
	\end{remark}
	
	With these observations in hand, we have the following
	
	\begin{definition}\label{defR}
	Let $(M^n, g)$ be a complete Riemannian manifold and let $(\MW, g_\omega)$ be a rotationally symmetric model space. Given the precompact domain $D \subseteq M$  and $f: D \subseteq M\longrightarrow\R^+$ a smooth non-negative function on $D$, let us define the function
	$$\widetilde{r}: [0, T] \rightarrow [0, L(D)]$$
	
	\noindent such that, for all $t \in [0, T]$, $\widetilde{r}(t)$ is defined as the radius of the symmetrization 
	$$D(t)^*=\brm_{\widetilde{r}(t)}^\omega(o_\omega)$$
	\noindent satisfying
	\begin{equation*}
	\Vol\left(\drm(t)\right)=\Vol\left(\brm_{\widetilde{r}(t)}^\omega(o_\omega)\right).
	\end{equation*}
	\end{definition}
	
	\begin{remark}
	Note  that, as $D(0)=D$, then $D(0)^*=D^*$, i.e., $\widetilde{r}(0)=L(D)$, the radius defined in Definition \ref{def:Symm}, and $D^*=\brm_{\widetilde{r}(0)}^\omega(o_\omega)$. On the other hand, as  $\Vol(D(t))=0\,\,\forall t \geq T$, then $\widetilde{r}(t)=0\,\,\forall t \geq T$.
	\end{remark}
	
 Concerning this last definition, we have the following result, which will play an important r\^ole in the proof of Proposition \ref{prop:ineqpsiE}:

\begin{lemma}\label{prop:Rfuncsymm}
	The function $\widetilde{r}: [0, T] \rightarrow [0, L(D)]$  is non-increasing. In particular, for all regular values $t \in R_f$, the function $\widetilde{r}\vert_{R_f}: R_f \subseteq [0, T] \rightarrow [0, L(D)]$ satisfies
	\begin{equation*}
		\widetilde{r}'(t)=-\dfrac{\int_{\partial D(t)}\norm{\nabla^M f}^{-1}\,d\mu_t}{\Vol\left(\srm_{\widetilde{r}(t)}^\omega\right)} < 0
	\end{equation*}
	
	\noindent so it  is strictly decreasing in $R_f$, and hence, injective (and bijective onto its image).
\end{lemma}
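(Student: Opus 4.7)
The plan is to establish monotonicity first by a direct nesting argument, then obtain the derivative formula by differentiating both sides of the defining identity $\Vol(D(t))=\Vol(\brm^\omega_{\widetilde{r}(t)}(o_\omega))$, where the left-hand side is handled with the coarea formula and the right-hand side by writing the volume as an integral of sphere volumes.

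For the monotonicity statement, if $0\leq t_1\leq t_2\leq T$, then the inclusion $D(t_2)\subseteq D(t_1)$ yields $\Vol(D(t_2))\leq\Vol(D(t_1))$. Since the map $r\mapsto\Vol(\brm^\omega_r(o_\omega))=\int_0^r\Vol(\srm^\omega_s(o_\omega))\,ds$ is strictly increasing on $[0,L(D)]$, the definition of $\widetilde{r}$ forces $\widetilde{r}(t_2)\leq\widetilde{r}(t_1)$.

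For the derivative formula on $R_f$, I would apply the coarea formula to the characteristic function of $D(t)$, giving
\begin{equation*}
\Vol(D(t))=\int_{D(t)}dV=\int_t^{T}\int_{\Gamma(s)}\norm{\nabla^M f}^{-1}\,d\mu_s\,ds.
\end{equation*}
For $t\in R_f$, the inner integrand is a continuous function of $s$ in a neighborhood of $t$ (since $\Gamma(s)$ is a smooth embedded hypersurface varying smoothly with $s$ on $R_f$ by the implicit function theorem, and $\norm{\nabla^M f}$ is continuous and non-vanishing there), so the fundamental theorem of calculus applies and
\begin{equation*}
\frac{d}{dt}\Vol(D(t))=-\int_{\Gamma(t)}\norm{\nabla^M f}^{-1}\,d\mu_t=-\int_{\partial D(t)}\norm{\nabla^M f}^{-1}\,d\mu_t.
\end{equation*}
On the other hand, since $\Vol(\brm^\omega_{\widetilde{r}(t)}(o_\omega))=\int_0^{\widetilde{r}(t)}\Vol(\srm^\omega_\rho(o_\omega))\,d\rho$ and $\rho\mapsto\Vol(\srm^\omega_\rho(o_\omega))$ is continuous and positive for $\rho>0$, the chain rule gives
\begin{equation*}
\frac{d}{dt}\Vol(\brm^\omega_{\widetilde{r}(t)}(o_\omega))=\Vol(\srm^\omega_{\widetilde{r}(t)}(o_\omega))\,\widetilde{r}'(t),
\end{equation*}
which (once the existence of $\widetilde{r}'(t)$ is justified by the nonvanishing of $\Vol(\srm^\omega_{\widetilde{r}(t)})$) produces the claimed identity upon equating the two derivatives. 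Strict negativity follows because $\Gamma(t)$ is nonempty with $\norm{\nabla^M f}$ continuous and strictly positive on it, so the numerator is a strictly positive finite number, while the denominator is positive as long as $\widetilde{r}(t)>0$.

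The main technical obstacle is ensuring enough regularity of $\widetilde{r}$ on $R_f$ to legitimately differentiate. Monotonicity alone only gives differentiability almost everywhere. I would address this by observing that the coarea representation shows $\Vol(D(t))$ is $C^1$ on $R_f$, and the defining identity together with strict monotonicity of $\rho\mapsto\Vol(\brm^\omega_\rho(o_\omega))$ and the implicit function theorem (applied to $F(t,\rho):=\Vol(\brm^\omega_\rho(o_\omega))-\Vol(D(t))$, whose $\rho$-derivative is the nonzero quantity $\Vol(\srm^\omega_\rho(o_\omega))$) yields that $\widetilde{r}$ is itself $C^1$ on $R_f$. Once this is in place, the two derivative computations above can be equated to solve for $\widetilde{r}'(t)$, completing the proof.
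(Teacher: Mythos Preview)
Your proposal is correct and follows essentially the same approach as the paper: both obtain monotonicity from the nesting $D(t_2)\subseteq D(t_1)$ and the strict monotonicity of $r\mapsto\Vol(\brm^\omega_r)$, and both obtain the derivative formula by differentiating the defining identity $\Vol(D(t))=\Vol(\brm^\omega_{\widetilde{r}(t)})$, using the coarea formula on the left and the chain rule on the right. The only substantive difference is that you are more careful about justifying the differentiability of $\widetilde{r}$ on $R_f$ via the implicit function theorem, a point the paper's proof passes over silently.
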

\begin{remark}
Note that when $R_f=[0,T]$, then $\widetilde{r}: [0, T] \rightarrow [0, L(D)]$ is bijective.
\end{remark}

\begin{proof}

When  $t_1\leq t_2$, then $\drm(t_2)\subseteq\drm(t_1)$ and hence $\Vol(\drm(t_2)) \leq \Vol(\drm(t_1))$, so $\Vol(\brm_{\widetilde{r}(t_2)}^\omega(o_\omega)) \leq \Vol(\brm_{\widetilde{r}(t_1)}^\omega(o_\omega))$ and hence $\widetilde{r}(t_2) \leq \widetilde{r}(t_1)$.

	On the other hand, given $t \in R_f$, let us denote as:
	
	\begin{equation*}
\vrm(t)  =\Vol\left(\drm(t)\right)=\Vol\left(\brm_{\widetilde{r}(t)}^\omega\right).
	\end{equation*}
	
	Then, \begin{equation*}
		\vrm'(t)=\Vol\left(\srm_{\widetilde{r}(t)}^\omega\right)\widetilde{r}'(t)
	\end{equation*}
	
	\noindent and as $\partial\drm(t)=\Gamma(t)=\{x\in\drm\,|\,f(x)=t\}$, by the co-area formula (see \cite{Cha1}, \cite{Sa}), and as $t \in R_f$, we have 
	

	
	\begin{equation*}
		\widetilde{r}'(t)=-\dfrac{\int_{\partial D(t)}\norm{\nabla^M f}^{-1}\,d\mu_t}{\Vol\left(\srm_{\widetilde{r}(t)}^\omega\right)} < 0
	\end{equation*}
	
	\noindent for all $t \in R_f$. Therefore, $\widetilde{r}\vert_{R_f}$ is strictly decreasing.
\end{proof}

To finish this subsection, we are going to prove in Theorem \ref{prop:Propietatssymmobjects} that, given $f:\drm\subseteq M\longrightarrow\R^+$ a non-negative function defined on the precompact domain $D$, the symmetrized function $f^*:D^{*_{\omega}}\longrightarrow\R$ is a radial function, and that $f$ and $f^*$ are both equimeasurable.

\begin{theorem}\label{prop:Propietatssymmobjects}
	Let $(M^n, g)$ be a complete Riemannian manifold, $D \subseteq M$ a precompact domain in $M$ and $f:\drm\subseteq M\longrightarrow\R^+$ a non-negative and smooth function on $\drm$. Let $(\MW, g_\omega)$ a rotationally symmetric model space such that its center $o_\omega$ is a pole. The symmetrized objects $f^*$ and $\drm^*$ satisfy the following properties:
	
	\begin{enumerate}
		\item \label{prop:Propietatssymmobjects1} The function $f^*$ depends only on the geodesic distance to the center $o_\omega$ of the ball $\drm^*$ in $\MW$ and is non-increasing.
		
		\item \label{prop:Propietatssymmobjects2} The functions $f$ and $f^*$ are equimeasurable in the sense that 
		
		\begin{equation}\label{eq:eqVol}
			\Vol_M\left(\{x\in\drm\,|\,f(x)\geq t\}\right)=\Vol_{\MW}\left(\{x^*\in\drm^*\,|\,f^*(x^*)\geq t\}\right)
		\end{equation}
		
		for all $t\geq 0$.
	\end{enumerate}
\end{theorem}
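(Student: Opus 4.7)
The plan is to reduce both assertions to the single structural fact that every upper level set $D(t)$ of $f$ is replaced under symmetrization by the concentric geodesic ball $D(t)^{*_\omega}=B^\omega_{\widetilde{r}(t)}(o_\omega)$ around $o_\omega$, together with the monotonicity of $\widetilde{r}$ coming from Lemma \ref{prop:Rfuncsymm}. Everything then follows from the chain of inclusions among these concentric balls.

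First, for (1), I would argue that $f^{*_\omega}$ is radial and non-increasing. By definition
\[
f^{*_\omega}(x^*)=\sup\{\,t\geq 0 \mid x^*\in D(t)^{*_\omega}\,\},
\]
and since $D(t)^{*_\omega}=B^\omega_{\widetilde{r}(t)}(o_\omega)$ is a geodesic ball centered at $o_\omega$, the condition $x^*\in D(t)^{*_\omega}$ is equivalent to $r_{o_\omega}(x^*)\leq \widetilde{r}(t)$, a condition depending only on $r_{o_\omega}(x^*)$. Hence $f^{*_\omega}(x^*)=F(r_{o_\omega}(x^*))$ for some function $F:[0,L(D)]\to [0,T]$. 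To see that $F$ is non-increasing, observe that if $r_1\leq r_2$ then $\{t:\widetilde{r}(t)\geq r_2\}\subseteq \{t:\widetilde{r}(t)\geq r_1\}$, so the suprema satisfy $F(r_2)\leq F(r_1)$. (Here one uses only that $\widetilde r$ is non-increasing, which is the first part of Lemma \ref{prop:Rfuncsymm}.)

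For (2), I want to identify $\{x^*\in D^{*_\omega} : f^{*_\omega}(x^*)\geq t\}$ with $B^\omega_{\widetilde{r}(t)}(o_\omega)$ up to a set of measure zero, since by definition of $\widetilde{r}$ we already know
\[
\Vol_M(D(t))=\Vol_{\MW}\bigl(B^\omega_{\widetilde{r}(t)}(o_\omega)\bigr).
\]
The plan is a two-sided inclusion. If $r_{o_\omega}(x^*)<\widetilde{r}(t)$, then $x^*\in D(t)^{*_\omega}$, so $t$ belongs to the set whose sup defines $f^{*_\omega}(x^*)$, giving $f^{*_\omega}(x^*)\geq t$. Conversely, if $r_{o_\omega}(x^*)>\widetilde{r}(t)$, then by the monotonicity of $\widetilde{r}$ we have $\widetilde{r}(s)\leq \widetilde{r}(t)<r_{o_\omega}(x^*)$ for every $s\geq t$, so $x^*\notin D(s)^{*_\omega}$ for any $s\geq t$, whence $f^{*_\omega}(x^*)<t$. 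Consequently
\[
B^\omega_{\widetilde{r}(t)}(o_\omega)\;\subseteq\;\{f^{*_\omega}\geq t\}\;\subseteq\;\overline{B^\omega_{\widetilde{r}(t)}(o_\omega)},
\]
and since the boundary sphere has zero $\MW$-measure, taking volumes yields
\[
\Vol_{\MW}\bigl(\{f^{*_\omega}\geq t\}\bigr)=\Vol_{\MW}\bigl(B^\omega_{\widetilde{r}(t)}(o_\omega)\bigr)=\Vol_M(D(t)),
\]
which is exactly \eqref{eq:eqVol}.

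The only delicate point is the boundary behavior of $\widetilde{r}$: at values $t$ where $\widetilde{r}$ jumps, the set $\{x^*: r_{o_\omega}(x^*)=\widetilde{r}(t)\}$ may or may not lie in the superlevel set depending on whether the defining interval $\{s:x^*\in D(s)^{*_\omega}\}$ is closed or half-open at its right endpoint. However, since this ambiguity is confined to a single geodesic sphere in $\MW$, which has vanishing $n$-dimensional measure, it is irrelevant for the volume identity. This is the only step requiring real attention; smoothness of $f$ together with Sard's theorem (as already noted after Definition \ref{def:simfunc}) ensures that $\widetilde{r}$ is in fact continuous on the dense open set $R_f$, so the exceptional set of radii is even smaller than a priori expected.
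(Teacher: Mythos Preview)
Your argument is correct and follows essentially the same route as the paper: both proofs reduce (1) to the fact that membership $x^*\in D(t)^{*_\omega}$ depends only on $r_{o_\omega}(x^*)$, and reduce (2) to identifying $\{f^{*_\omega}\geq t\}$ with $B^\omega_{\widetilde r(t)}(o_\omega)$ so that the volume equality is immediate from the definition of $\widetilde r$. The only difference is cosmetic: the paper asserts the exact set equality $D(t)^*=\{f^{*_\omega}\geq t\}$ directly (implicitly using that $\widetilde r$ is left-continuous, so the supremum in the definition of $f^{*_\omega}$ is attained), whereas you sandwich $\{f^{*_\omega}\geq t\}$ between the open and closed balls and dispose of the boundary sphere by measure; your version is slightly more cautious but not genuinely different.
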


\begin{proof}

We are going to prove first statement. Let us consider $x_1^*,x_2^*\in\drm^*=\brm_{\widetilde{r}(0)}^\omega(o_\omega)$ such that $r_{o_\omega}(x_1^*)=r_{o_\omega}(x_2^*)$. Then  it is evident that $x_1^* \in \brm_{\widetilde{r}(t)}^\omega(o_\omega)$ if and only if $x_2^* \in \brm_{\widetilde{r}(t)}^\omega(o_\omega)$ for all $t \in [0,T]$. Hence, 

	\begin{equation*}
	f^*(x_1^*)=\sup\left\{t\geq 0\,|\,x_1^*\in \brm_{\widetilde{r}(t)}^\omega(o_\omega)\right\}
	= \sup\left\{t\geq 0\,|\,x_2^*\in \brm_{\widetilde{r}(t)}^\omega(o_\omega)\right\}=f^*(x_2^*)
	\end{equation*}
	\noindent which means that $f^*$ is a radial function. Namely, $f^*$ depends only on the geodesic distance to the center $o_\omega$, $f^*(x^*)=f^*(r_{o_\omega}(x^*))$.
	
	To see that $f^*$ is non-increasing, let us consider $x_1^*,x_2^*\in\drm^*$ such that $r_{o_\omega}(x_1^*)\leq r_{o_\omega}(x_2^*)$. We are going to see that $t_1:=f^*(x_1^*) \geq t_2:=f^*(x_2^*)$. 
	
	As $$f^*(x_2^*)= \sup\left\{t\geq 0\,|\,x_2^*\in \brm_{\widetilde{r}(t)}^\omega(o_\omega)\right\}=\sup\left\{t\geq 0\,|\,r_{o_\omega}(x_2^*) \leq \widetilde{r}(t)\right\}=t_2$$
	\noindent then, if $t \leq t_2$, we have that $x_2^*\in \brm_{\widetilde{r}(t)}^\omega(o_\omega)$, so $r_{o_\omega}(x_2^*) \leq \widetilde{r}(t)\,\,\forall t \leq t_2$. In particular, $r_{o_\omega}(x_1^*)\leq r_{o_\omega}(x_2^*) \leq \widetilde{r}(t_2)$, so $x_1^* \in \brm_{\widetilde{r}(t_2)}^\omega(o_\omega)$ and therefore, $t_1=f^*(x_1^*)=\sup\left\{t\geq 0\,|\,x_1^*\in \brm_{\widetilde{r}(t)}^\omega(o_\omega)\right\} \geq t_2=f^*(x_2^*)$.
	\medskip
	
	To prove second statement, note that, for all $t>0$, we have, by Definitions \ref{def:simfunc} and \ref{defR}, 
	
	\begin{equation*}
		\begin{split}
			\drm(t)^* & =\brm_{\widetilde{r}(t)}^\omega(o_\omega)=\left\{x^*\in\drm^*\,|\,f^*(x^*)\geq t\right\}.
		\end{split}
	\end{equation*}
	
	\noindent In fact, if $x^* \in \brm_{\widetilde{r}(t)}^\omega(o_\omega)$, then $f^*(x^*)=\sup\left\{t\geq 0\,|\,x^*\in \brm_{\widetilde{r}(t)}^\omega(o_\omega)\right\} \geq t$ and, conversely, if $f^*(x^*)=\sup\left\{t\geq 0\,|\,x^*\in \brm_{\widetilde{r}(t)}^\omega(o_\omega)\right\} \geq t$, then $x^* \in \brm_{\widetilde{r}(t)}^\omega(o_\omega)$.

	Therefore, since  $\drm(t)=\left\{x\in\drm\,|\,f(x)\geq t\right\}$, we obtain that
	
	\begin{equation*}
		\Vol\left(\left\{x\in\drm\,|\,f(x)\geq t\right\}\right)=\Vol\left(\drm(t)\right)=\Vol\left(\drm(t)^*\right)=\Vol\left(\left\{x^*\in\drm^*\,|\,f^*(x^*)\geq t\right\}\right).
	\end{equation*}
\end{proof}

\section{Mean Exit Time Comparison}\label{sec:MeanComp}\

\bigskip
 
 We start this section with the notion of {\em transplanted mean exit time}.
 
 \begin{definition}\label{transplanted}
 Let $(M,g)$ a complete Riemannian manifold and $(\MW,g_{\omega})$ a model space with center $o_\omega$. Given $o \in M$, let us consider a geodesic $R$-ball $B^M_R(o)$, with $0 <R<\, inj(o)$ and the geodesic $R$-ball in $\MW$, centered at the center $o_\omega$, $B^{\omega}_R(o_\omega)$. Let $E_R^M$ and $E_R^\omega$ be the mean exit time functions defined on $B^M_R(o)$ and $B^{\omega}_R(o_\omega)$, respectively. 
 
 Now, we {\em transplant} the radial mean exit time function of $\MW$ to $M$ by defining the function $\E_R^\omega:\, \brm_R^M \rightarrow \erre$ as $\E_R^\omega(x):=E_R^\omega\left(r_o(x)\right)\,\,\forall x \in \brm_R^M$ where $r_o$ is the distance function to $o$, the center of the ball $\brm_R^M(o)$. 
 
 The function $\E_R^\omega$ is a radial function called the {\em transplanted mean exit time} function in $\brm_R^M$.
 \end{definition}

We can compare the transplanted mean exit time function  $\E_R^\omega$ defined in a geodesic ball $B^M_R$ with the mean exit time function $E^M_R$ corresponding with this ball. Remember that, along the text,   we can omit the centers $o \in M$ and  $o_\omega \in M^n_\omega$ when we refer to the balls $\brm_{r}^M(o)$ and $\brm_{r}^\omega(o_\omega)$ and the spheres $\srm_{r}^M(o)$ and $\srm_{r}^\omega(o_\omega)$.

Our first result in this regard is following:

\begin{proposition}\label{prop3.2}
Let $(M^n,g)$ be a complete Riemannian manifold and let $(\MW,g_\omega)$ be a rotationally symmetric model space with center $o_\omega \in \MW$. Let $o \in M$ be a point in $M$ and let us suppose that  $inj(o) \leq inj(o_\omega)$. Let us consider a geodesic ball $B^M_R(o)$, with $R < inj(o) \leq inj(o_w)$. Then the following assertions are equivalent:
\begin{enumerate}
\item $E^M_R= \E_R^\omega \,\,\text{on}\,\,\,B^M_R(o)$.
\item $H_{S^w_r(o_\omega)}=H_{S^M_r(o)}\,\,\,\forall r \in ]0,R]$.
\end{enumerate}
\noindent where $\hrm_{\srm_r^M(o)}$ denotes the mean curvature of the geodesic $r$- sphere $\srm_r^M(o) \subseteq M$ and $\hrm_{\srm_r^\omega(o_\omega)}$ is the corresponding mean curvature of the geodesic $r$-sphere $\srm_r^\omega(o_\omega) \subseteq \MW$. 
\end{proposition}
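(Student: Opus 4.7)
The plan is to derive, on $B^M_R(o)\setminus\{o\}$, a single pointwise identity relating $\Delta^M \E_R^\omega$ to the difference of radial mean curvatures, and to then read off both implications from it.

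Since $R<\inj(o)$, polar coordinates $(r,\ov{\theta})\in(0,R]\times\SN_1$ cover $B^M_R(o)\setminus\{o\}$. Combining (\ref{lapeq}) with Proposition \ref{lapmean}, the Laplacian of any smooth radial function $f(r)$ in $M$ reads
\begin{equation*}
\Delta^M f(r) \;=\; f''(r) + (n-1)\,\hrm_{\srm_r^M}(r,\ov{\theta})\, f'(r),
\end{equation*}
while in the model the analogous formula is purely radial, with $\hrm_{\srm_r^\omega}(r)$ replacing $\hrm_{\srm_r^M}(r,\ov{\theta})$. Applying both to $f=E_R^\omega$, so that $\E_R^\omega=f\circ r_o$, and using $\Delta^{\MW}E_R^\omega=-1$ from Proposition \ref{prop:MeanTorEqualities}, subtraction produces the key pointwise identity
\begin{equation*}
\Delta^M \E_R^\omega + 1 \;=\; (n-1)\bigl[\hrm_{\srm_r^M}(r,\ov{\theta}) - \hrm_{\srm_r^\omega}(r)\bigr](E_R^\omega)'(r) \qquad \text{on } B^M_R(o)\setminus\{o\}.
\end{equation*}

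For the direction (2)$\Rightarrow$(1), the coincidence of mean curvatures on $(0,R]$ forces the right-hand side to vanish, so $\Delta^M \E_R^\omega = -1$ on the punctured ball. The even-vanishing conditions on $\omega$ at $0$ from Definition \ref{def:modelspaces}, together with $(E_R^\omega)'(0)=0$ from Proposition \ref{prop:MeanTorEqualities}, guarantee that the transplant $\E_R^\omega$ extends smoothly across the center $o$, so the equation $\Delta^M \E_R^\omega = -1$ holds on all of $B^M_R(o)$. Since $\E_R^\omega$ vanishes on $\partial B^M_R(o)$, uniqueness for the Dirichlet--Poisson problem (\ref{eq:moments1}) yields $E^M_R = \E_R^\omega$.

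For the reverse direction, if $E^M_R = \E_R^\omega$ then $\Delta^M \E_R^\omega \equiv -1$, and the key identity collapses to
\begin{equation*}
(n-1)\bigl[\hrm_{\srm_r^M}(r,\ov{\theta}) - \hrm_{\srm_r^\omega}(r)\bigr](E_R^\omega)'(r) \;\equiv\; 0
\end{equation*}
on the punctured ball. Proposition \ref{prop:MeanTorEqualities} ensures $(E_R^\omega)'(r)<0$ strictly for every $r\in(0,R]$, so one may divide through and obtain $\hrm_{\srm_r^M}(r,\ov{\theta}) = \hrm_{\srm_r^\omega}(r)$ for every such $r$ and every direction $\ov{\theta}$. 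The only real delicacy in this scheme is checking the smoothness of the transplant at the center $o$; this reduces to a standard consequence of the model conditions in Definition \ref{def:modelspaces}, after which both implications are a direct read-off of the identity.
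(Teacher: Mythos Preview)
Your proof is correct and follows essentially the same route as the paper: both derive the pointwise identity $\Delta^M\E_R^\omega+1=(n-1)\bigl[\hrm_{\srm_r^M}-\hrm_{\srm_r^\omega}\bigr](E_R^\omega)'(r)$ and read off both implications from it, using $(E_R^\omega)'<0$ on $(0,R]$ and uniqueness for the Dirichlet--Poisson problem (which the paper phrases as an appeal to the Maximum Principle). Your treatment is in fact slightly more careful than the paper's in isolating the smoothness of the transplant at the center $o$.
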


\begin{proof}
Using polar coordinates $(r,\ov{\theta})$ in $M- (Cut(o)\cup\{o\})$, equations (\ref{lapeq}) and (\ref{laplacemodel}) and applying Maximum Principle, equality $E^M_R= \E_R^\omega \,\,\text{on}\,\,\,B^M_R(o)$ is equivalent to equality 
$$\Delta^M\E_R^\omega(r, \ov{\theta})=\Delta^M E^M_R(r, \ov{\theta})=-1=\Delta^{M^n_\omega}E^\omega_R\,\,\,\,\forall (r, \ov{\theta})$$ 
\noindent which, in its turn, applying Proposition \ref{lapmean} and equations (\ref{eq:WarpMean}) and (\ref{laplacemodel}), is equivalent to equality, for all $(r,\ov{\theta}) \in [0,R]\times \SN_1$:
$$\E_R^\omega{''}(r)+(n-1) H_{S^M_r(o)}\,\E_R^\omega{'}(r)=E_R^\omega{''}(r)+(n-1)H_{S^w_r(o_\omega)}\,E_R^\omega{'}(r)$$
and, as for all $r \in ]0,R]$, $\E_R^\omega{''}(r)= E_R^\omega{''}(r)$ and $\E_R^\omega{'}(r)=E_R^\omega{'}(r)<0\,\forall r \in ]0,R]$, this last equality is equivalent to equality
$$H_{S^M_r(o)}=H_{S^w_r(o_\omega)}\,\,\forall r \in ]0,R]$$.
\end{proof}

Now, we can state the following  comparison theorem: 

\begin{theorem}\label{teo:MeanComp}
	Let $(M^n,g)$ be a complete Riemannian manifold and let $(\MW,g_\omega)$ be a rotationally symmetric model space with center $o_\omega \in \MW$. Let $o \in M$ be a point in $M$ and let us suppose that  $inj(o) \leq inj(o_\omega)$. Let us consider a metric ball $B^M_R(o)$, with $R < inj(o) \leq inj(o_w)$. Let us suppose moreover that
	
	\begin{equation}\label{eq:meancurvatureconditions3}
		\hrm_{\srm_r^\omega(o_\omega)}\leq\,(\geq)\, \hrm_{\srm_r^M(o)}\quad\text{for all}\quad 0 < r \leq R
	\end{equation}
	
	\noindent where $\hrm_{\srm_r^M}$ denotes the mean curvature of the metric $r$- sphere $\srm_r^M(o) \subseteq M$ and $\hrm_{\srm_r^\omega}$ is the corresponding mean curvature of the metric $r$-sphere $\srm_r^\omega(o_\omega) \subseteq \MW$. 
	
	Then, we have the inequality
	\begin{equation}\label{eq:MeanComp}
		\E_R^\omega\geq\,(\leq)\, E_R^M\quad\text{in}\quad\brm_R^M (o)\, ,
	\end{equation}
	
	\noindent where $\E_R^\omega(x):=E_R^\omega\left(r_o(x)\right)$ is the {\em transplanted mean exit time} function in $\brm_R^M(o)$.
		
 Moreover, if there exists $p \in B^M_R(o)$ such that $\E_R^\omega(p)=E^M_R(p)$, then 
$$\E_R^\omega=E_R^M\,\quad\text{in}\,\,\,\brm_R^M(o)$$
\noindent and hence,
$$H_{S^w_r(o_\omega)}=H_{S^M_r(o)}\,\,\,\forall r \in ]0,R].$$

\end{theorem}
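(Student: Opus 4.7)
The plan is to exploit the polar-coordinate formula for the Laplacian of a radial function together with the Maximum Principle (Theorem \ref{th:mp}), reducing everything to the sign information on $(E_R^\omega)'$ supplied by Proposition \ref{prop:MeanTorEqualities} and the curvature hypothesis \eqref{eq:meancurvatureconditions3}.

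First I would compute $\Delta^M \E_R^\omega$ on $B^M_R(o)\setminus\{o\}$. Since $\E_R^\omega$ is radial and, by Proposition \ref{lapmean}, $\frac{\partial}{\partial r}\log\DGr = (n-1)\hrm_{\srm_r^M(o)}$, the expression \eqref{lapeq} yields
\begin{equation*}
\Delta^M \E_R^\omega(r,\ov\theta) = (E_R^\omega)''(r) + (n-1)\hrm_{\srm_r^M(o)}\,(E_R^\omega)'(r).
\end{equation*}
On the other hand, by \eqref{laplacemodel} together with $\Delta^{M^n_\omega} E_R^\omega = -1$,
\begin{equation*}
(E_R^\omega)''(r) + (n-1)\hrm_{\srm_r^\omega(o_\omega)}\,(E_R^\omega)'(r) = -1.
\end{equation*}
Subtracting gives
\begin{equation*}
\Delta^M \E_R^\omega + 1 = (n-1)\bigl(\hrm_{\srm_r^M(o)} - \hrm_{\srm_r^\omega(o_\omega)}\bigr)(E_R^\omega)'(r).
\end{equation*}

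Next I would invoke Proposition \ref{prop:MeanTorEqualities}, which gives $(E_R^\omega)'(r) < 0$ for $r\in (0,R]$. Under the hypothesis $\hrm_{\srm_r^\omega}\leq \hrm_{\srm_r^M}$, the right-hand side above is $\leq 0$, so $\Delta^M \E_R^\omega \leq -1 = \Delta^M E_R^M$. Hence the function $v := E_R^M - \E_R^\omega$ is subharmonic on $B^M_R(o)$, continuous up to the boundary, and vanishes on $\srm_R^M(o)$. Theorem \ref{th:mp}(i) applied to the bounded subharmonic function $v$ gives $v\leq 0$ on $B^M_R(o)$, which is the desired inequality $\E_R^\omega \geq E_R^M$. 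The reverse curvature inequality is handled identically with the sign of $v$ flipped.

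For the rigidity statement, suppose $\E_R^\omega(p) = E_R^M(p)$ at some $p\in B^M_R(o)$; then $v(p)=0$, so the subharmonic function $v$ attains its maximum ($=0$) in the interior. Theorem \ref{th:mp}(i) forces $v\equiv 0$, i.e.\ $\E_R^\omega = E_R^M$ on all of $B^M_R(o)$. Now Proposition \ref{prop3.2} directly gives $\hrm_{\srm_r^\omega(o_\omega)} = \hrm_{\srm_r^M(o)}$ for every $r\in (0,R]$, completing the proof.

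The only genuinely delicate point is the behavior of the transplanted function at the center $o$ and across the cut locus, which is empty inside $B^M_R(o)$ by the assumption $R<\inj(o)$; at $o$ itself, the radial function $E_R^\omega$ is smooth thanks to the model-space regularity conditions $\omega(0)=0,\ \omega'(0)=1,\ \omega^{(2k)}(0)=0$ in Definition \ref{def:modelspaces}, so $\E_R^\omega \in C^2(B^M_R(o))$ and the application of the Maximum Principle is legitimate. This is the only step requiring care; the rest is an essentially algebraic consequence of the polar-coordinate Laplacian.
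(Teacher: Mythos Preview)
Your proof is correct and follows essentially the same approach as the paper: compute $\Delta^M\E_R^\omega$ in polar coordinates, compare with $\Delta^{M_\omega}E_R^\omega=-1$ to obtain $\Delta^M\E_R^\omega+1=(n-1)(\hrm_{\srm_r^M}-\hrm_{\srm_r^\omega})(E_R^\omega)'$, use the sign of $(E_R^\omega)'$ and the hypothesis to make $v=E_R^M-\E_R^\omega$ subharmonic, and conclude via the strong maximum principle; the rigidity step and the appeal to Proposition~\ref{prop3.2} are likewise identical. Your added remark on regularity at the center is a welcome clarification that the paper leaves implicit.
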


\begin{proof}
	To prove first assertion, let us consider polar coordinates $(r, \ov{\theta}) \in [0, inj(o))\times \SN_1$ centered at the center $o\in M$ of the geodesic ball $B^M_R$, with $R < inj(o)$, (as before and along the rest of the paper, we shall omit the center point of the ball $o$ if there is not confusion). By definition of  $\E_R^\omega$, and using equation (\ref{eq:EwR}) we have that this radial function satisfies

	
	\begin{equation}\label{eq:MeanTransDecreixent}
		\E_R^\omega{'}(r)=E_R^\omega{'}(r)< 0, \quad\text{for all } r\in ]0,R],
	\end{equation}
	
	\noindent and, since  $\Delta^{\MW}E_R^\omega=-1\,\text{on} \, B^\omega_R$,
	
	

	\begin{equation*}
		\E_R^\omega{''}(r)=E_R^\omega{''}(r)=-1-(n-1)\,\dfrac{\omega'(r)}{\omega(r)}\,E_R^\omega{'}(r).
	\end{equation*}
	
	\noindent Therefore, using equation (\ref{lapeq}) and  applying Proposition \ref{lapmean} and equations (\ref{eq:WarpMean}) and (\ref{laplacemodel}), we have, for all $(r,\ov{\theta}) \in ]0,R]\times \SN_1$:
	
	\begin{equation}\label{eq:MeanTransLapSubstituida}
		\Delta^M\E_R^\omega(r,\ov{\theta})=-1+(n-1)\left(\hrm_{\srm_r^M}-\hrm_{\srm_r^\omega}\right)E_R^\omega{'}(r)\, .
	\end{equation}

	Then, from equations \eqref{eq:MeanTransLapSubstituida} and \eqref{eq:MeanTransDecreixent}, and assuming inequality $\hrm_{\srm_r^\omega}\,\leq\, \hrm_{\srm_r^M}\,\,\text{for all}\, r>0$ we obtain that
	
	\begin{equation}\label{eq:MeaniMeanTransDesigLap}
		\Delta^M\E_R^\omega(r,\ov{\theta})\leq -1=\Delta^M E_R^M(r,\ov{\theta}),\quad\text{for all }(r,\ov{\theta})\in]0,R]\,\times\,\SN_1\, .
	\end{equation}
	
	Thus
	
	\begin{equation*}
		\Delta^M\left(E_R^M-\E_R^\omega\right)(r,\ov{\theta})\geq 0\,\,\text{on}\,\, B^M_R
	\end{equation*}
\noindent and since $\left(E_R^M-\E_R^\omega\right)(R)=0$ we have, applying the strong maximum principle  	
	\begin{equation*}
		\E_R^\omega \geq E_R^M\,\,\text{ on}\,\, B^M_R
	\end{equation*}
	
	\noindent  as we wanted to prove. 
	We obtain opposite inequalities with same arguments, assuming that $\hrm_{\srm_r^\omega}\geq\, \hrm_{\srm_r^M}\quad\text{for all}\quad r>0$.
	\medskip
	
	We are going to prove second assertion assuming that 
	$$\hrm_{\srm_r^\omega(o_\omega)}\leq\, \hrm_{\srm_r^M(o)}\quad\text{for all}\quad 0 < r \leq R\, .$$
	
	\noindent For that, let us suppose that there exists $p \in B^M_R$ such that $\E_R^\omega(p)=E^M_R(p)$. Therefore, we have that $\Delta^M\left(E_R^M-\E_R^\omega\right) \geq 0$ on $B^M_R$ and that $E^M_R - \E_R^\omega \leq 0=(E_R^M-\E_R^\omega)(p)$ on $B^M_R$. Hence, $E_R^M-\E_R^\omega$ attains its maximum in $B^M_R$. Applying the strong maximum principle, the diference function $E_R^M-\E_R^\omega =C$ is constant on $B^M_R$ and, by continuity, as $E_R^M-\E_R^\omega =0$ on $\partial B^M_R=S^M_R$, then $C=0$.	
		
\end{proof}

\begin{corollary}\label{cor:MeanComp}

Let $(M^n,g)$ be a complete Riemannian manifold and let $(\MW,g_\omega)$ be a rotationally symmetric model space with center $o_\omega \in \MW$. Let $o \in M$ be a point in $M$ and let us suppose that  $inj(o) \leq inj(o_\omega)$. Let us consider a metric ball $B^M_R(o)$, with $R < inj(o) \leq inj(o_w)$. Let us suppose moreover that
	
	\begin{equation}\label{eq:meancurvatureconditions4}
		\hrm_{\srm_r^\omega}\leq\,(\geq)\, \hrm_{\srm_r^M}\quad\text{for all}\quad 0 < r \leq R.
	\end{equation}
	
	\noindent Then we have the isoperimetric inequalities
		
		\begin{equation} \label{cor:MeanComp1}
			\dfrac{\Vol\left(\brm_r^\omega(o_\omega)\right)}{\Vol\left(\srm_r^\omega(o_\omega)\right)}\geq\,(\leq)\,\dfrac{\Vol\left(\brm_r^M(o)\right)}{\Vol\left(\srm_r^M(o)\right)}\quad\text{for all}\quad 0<r \leq R.
		\end{equation}
		
		\noindent Moreover, equality in inequalites (\ref{cor:MeanComp1}) for some radius $r_0 \in ]0,R]$ implies that $$H_{S^w_r(o_\omega)}=H_{S^M_r(o)}\,\,\,\forall r \in ]0,r_0].$$
		
		 \noindent As a consequence of inequalities (\ref{cor:MeanComp1}), for all $0 < r \leq R$, we have
		
		\begin{equation}\label{ineqVolumes}
		\begin{aligned}
			\Vol\left(\brm_r^\omega(o_\omega)\right)\leq\,&(\geq)\,\Vol\left(\brm_r^M(o)\right),\\
			\Vol(S^\omega_r(o_\omega)) \leq\,&(\geq)\, \Vol(S^M_r(o)).
			\end{aligned}
		\end{equation}
		\noindent Finally,  equality 
		$$\Vol\left(\brm_{r_0}^\omega(o_\omega)\right)=\Vol\left(\brm_{r_0}^M(o)\right)$$
		\noindent  for some radius $r_0 \in ]0,R]$ implies that $$H_{S^w_r(o_\omega)}=H_{S^M_r(o)}\,\,\,\forall r \in ]0,r_0].$$
\end{corollary}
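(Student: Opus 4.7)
The strategy is to apply Theorem~\ref{teo:MeanComp} at each intermediate radius $r \in (0,R]$ and then convert the resulting mean exit time comparison into the isoperimetric bound via the divergence theorem. Treating the case $\hrm_{\srm_r^\omega} \leq \hrm_{\srm_r^M}$ (the other case is symmetric), Theorem~\ref{teo:MeanComp} applied to $\brm_r^M$ gives $E_r^M \leq \E_r^\omega$ on $\brm_r^M$ with equality on $\srm_r^M$. The difference $u := E_r^M - \E_r^\omega$ is subharmonic (this is precisely inequality (\ref{eq:MeaniMeanTransDesigLap})), non-positive on $\brm_r^M$, and vanishes on $\srm_r^M$, so its outward normal derivative satisfies $\partial u/\partial\nu \geq 0$ pointwise on $\srm_r^M$. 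Since the outward normal there is $\nabla^M r$ and $(E_r^\omega)'(r) = -q_\omega(r)$, this reads $\langle \nabla^M E_r^M, \nabla^M r\rangle \geq -q_\omega(r)$ on $\srm_r^M$. Integrating over $\srm_r^M$ and invoking the divergence theorem on $\Delta^M E_r^M = -1$,
\begin{equation*}
-\Vol(\brm_r^M) = \int_{\srm_r^M}\langle \nabla^M E_r^M,\nu\rangle\, d\sigma \geq -q_\omega(r)\,\Vol(\srm_r^M),
\end{equation*}
which is exactly the isoperimetric inequality (\ref{cor:MeanComp1}).

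For the absolute volume bounds (\ref{ineqVolumes}), I would rewrite the isoperimetric estimate as $\frac{d}{dr}\ln\Vol(\brm_r^M) = \Vol(\srm_r^M)/\Vol(\brm_r^M) \geq 1/q_\omega(r) = \frac{d}{dr}\ln\Vol(\brm_r^\omega)$, so that $\Vol(\brm_r^M)/\Vol(\brm_r^\omega)$ is non-decreasing in $r$. Since both volumes are asymptotic to the Euclidean ball volume as $r\to 0^+$ (by virtue of $\omega(t) = t + O(t^3)$ from Definition~\ref{def:modelspaces}), this ratio starts at $1$, and the monotonicity yields $\Vol(\brm_r^M) \geq \Vol(\brm_r^\omega)$; multiplying by $1/q_\omega(r)$ and using the isoperimetric bound then gives $\Vol(\srm_r^M) \geq \Vol(\srm_r^\omega)$.

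For the equality clause, suppose either equality in (\ref{cor:MeanComp1}) at some $r_0$, or $\Vol(\brm_{r_0}^M) = \Vol(\brm_{r_0}^\omega)$. In either case the non-decreasing function $\ln[\Vol(\brm_r^M)/\Vol(\brm_r^\omega)]$ starts at $0$ and vanishes at $r_0$, hence is identically zero on $(0,r_0]$, forcing equality in the isoperimetric estimate throughout this interval. Consequently the non-negative quantity $\langle \nabla^M E_r^M,\nu\rangle + q_\omega(r)$ has zero integral on $\srm_r^M$, so $\partial u/\partial\nu \equiv 0$ on the full boundary sphere. The contrapositive of Theorem~\ref{th:mp}(ii) then forces $u$ to attain its maximum $0$ at some interior point of $\brm_r^M$, and the strong maximum principle Theorem~\ref{th:mp}(i) upgrades this to $u \equiv 0$, i.e.\ $E_r^M = \E_r^\omega$ on $\brm_r^M$. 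By Proposition~\ref{prop3.2} this yields $\hrm_{\srm_s^\omega} = \hrm_{\srm_s^M}$ for every $s \in (0,r]$, and letting $r$ run up to $r_0$ concludes the argument. The main obstacle is precisely this equality step: one must lift the global vanishing of $\partial u/\partial\nu$ along the entire boundary $\srm_r^M$ to the global identity $u \equiv 0$ on $\brm_r^M$, which requires combining Hopf's lemma with the strong maximum principle rather than applying either in isolation.
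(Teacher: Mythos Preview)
Your inequality arguments are correct and parallel the paper's, with one cosmetic difference: you apply the divergence theorem to $E_r^M$ and bound its normal derivative pointwise via $\partial u/\partial\nu\ge 0$, whereas the paper applies the divergence theorem directly to $\E_r^\omega$ (using $\Delta^M\E_r^\omega\le -1$ and the fact that $\langle\nabla^M\E_r^\omega,\nu\rangle=-q_\omega(r)$ is constant along $\srm_r^M$). Both yield $\Vol(\brm_r^M)\le q_\omega(r)\Vol(\srm_r^M)$ in one line; your route is slightly longer but equally valid. The volume comparison via the log-ratio $G(r)=\ln[\Vol(\brm_r^M)/\Vol(\brm_r^\omega)]$ is exactly what the paper does.

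There is, however, a logical slip in your equality discussion. Equality in (\ref{cor:MeanComp1}) at a single radius $r_0$ does \emph{not} say that $G(r_0)=0$; it says $G'(r_0)=0$, which together with $G(0)=0$ and $G'\ge 0$ gives no information about $G$ on $(0,r_0]$. So the sentence ``in either case \ldots vanishes at $r_0$'' is wrong for the isoperimetric-equality branch. The fix is simple: skip the $G$-detour in that branch and run your Hopf argument \emph{directly at} $r=r_0$. Isoperimetric equality at $r_0$ means the nonnegative integrand $\partial u/\partial\nu$ has zero integral over $\srm_{r_0}^M$, hence vanishes identically there; then Theorem~\ref{th:mp}(ii) combined with (i) forces $u\equiv 0$ on $\brm_{r_0}^M$, and Proposition~\ref{prop3.2} finishes. (The $G$-detour is still needed, and correct, for the volume-equality branch.)

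Once repaired, your equality argument is a genuinely different route from the paper's. The paper never invokes the Hopf boundary lemma here: instead it observes that equality in $\Vol(\brm_{r_0}^M)\le\int_{\brm_{r_0}^M}(-\Delta^M\E_{r_0}^\omega)$ forces the nonpositive \emph{interior} integrand $1+\Delta^M\E_{r_0}^\omega$ to vanish on all of $\brm_{r_0}^M$, whence $\Delta^M\E_{r_0}^\omega=\Delta^M E_{r_0}^M$ and the (ordinary) maximum principle gives $\E_{r_0}^\omega=E_{r_0}^M$. Your approach trades this interior argument for a boundary one; both are clean, and yours has the mild advantage of reusing the pointwise boundary estimate you already established for the inequality.
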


\begin{proof}

Let us fix one radius $r \in ]0,R]$. The proof follows the lines of the proof of Theorem 1.1 and Corollary 1.2 in \cite{Pa2}, adapting it to this intrinsic context and using the new hypotheses.

First, let us assume that $\hrm_{\srm_r^\omega}\leq \hrm_{\srm_r^M}$, for all $ 0<r \leq R$. If we fix $r \in ]0,R]$, then we have, in particular, that $\hrm_{\srm_s^\omega}\leq \hrm_{\srm_s^M}$, for all $ 0<s \leq r$. We can apply Theorem \ref{teo:MeanComp} to obtain
	
	\begin{equation*}
		\Delta^M\E_{r}^\omega\leq\,(\geq)\,\Delta^M E_{r}^M=-1\,\,\text{on}\,\, B^M_{r}.
			\end{equation*} 
			
			\noindent Therefore, since $\norm{\nabla^M r}=1$, and using the \emph{Divergence Theorem}, we have	
	\begin{equation}\label{eqeq1}
		\begin{split}
			\Vol\left(\brm_{r}^M\right) &\leq \int_{\brm_{r}^M}-\Delta^M\E_{r}^\omega\,d\widetilde{\sigma}
			 = -\int_{\brm_{r}^M}\Div\left(\nabla^M\E_{r}^\omega\right)\,d\widetilde{\sigma}\\&=-\int_{\srm_{r}^M}\ep{\nabla^M\E_{r}^\omega,\nabla^M r}\,d\sigma
			 =-\E_{r}^\omega{'}({r})\Vol\left(\srm_{r}^M\right).
		\end{split}
	\end{equation}
	
	Thus, we obtain, using Proposition \ref{prop:MeanTorEqualities},
	
	\begin{equation*}
		\Vol\left(\brm_{r}^M\right)\leq -\E_r^\omega{'}({r})\Vol\left(\srm_{r}^M\right)=q_\omega({r})\Vol\left(\srm_{r}^M\right)=\dfrac{\Vol\left(\brm_{r}^\omega\right)}{\Vol\left(\srm_{r}^\omega\right)}\Vol\left(\srm_{r}^M\right)
	\end{equation*}
	
	\noindent and therefore
	
	\begin{equation*}
		\dfrac{\Vol\left(\brm_{r}^\omega\right)}{\Vol\left(\srm_{r}^\omega\right)}\geq \dfrac{\Vol\left(\brm_{r}^M\right)}{\Vol\left(\srm_{r}^M\right)}.
	\end{equation*}
	
	We are going to discuss the equality assertion: we are still assuming that  $\hrm_{\srm_r^\omega}\leq\,\hrm_{\srm_r^M}$, for all $ 0<r \leq R$. If there exists $r_0 \in ]0,R]$ such that we have
	
	\begin{equation*}
		\dfrac{\Vol\left(\brm_{r_0}^\omega\right)}{\Vol\left(\srm_{r_0}^\omega\right)}= \dfrac{\Vol\left(\brm_{r_0}^M\right)}{\Vol\left(\srm_{r_0}^M\right)}
	\end{equation*}
	
	\noindent then all the inequalities in (\ref{eqeq1}) become equalities with the radius $r_0$. 
	
	In particular, 
	$$\Vol\left(\brm_{r_0}^M\right) = \int_{\brm_{r_0}^M}-\Delta^M\E_{r_0}^\omega\,d\widetilde{\sigma}$$
	\noindent and hence, as $1+\Delta^M\E_{r_0}^\omega \leq 0$ on $B^M_{r_0}$, we conclude that $1+\Delta^M\E_{r_0}^\omega = 0$ on $B^M_{r_0}$ and hence, as $\Delta^M\E_{r_0}^\omega=\Delta^ME^M_{r_0}$ on $B^M_{r_0}$ then, applying the maximum principle, $\E_{r_0}^\omega=E^M_{r_0}$ on $B^M_{r_0}$ and hence, by Proposition \ref{prop3.2}, $\hrm_{\srm_r^\omega}= \hrm_{\srm_r^M}\,\,\forall r \in ]0, r_0]$.
	
	When we assume that $\hrm_{\srm_r^\omega}\geq \hrm_{\srm_r^M}\,\,\forall r \in ]0, R]$, we argue as before, inverting all the inequalities to conclude the opposite isoperimetric inequality. The equality discussion is the same, mutatis mutandi.

	  To prove statement (\ref{ineqVolumes}), and as in Corollary 1.2 in \cite{Pa2}, let us define, given $0 <R < inj(o)$  the function $$G: [0,R] \rightarrow \erre$$
	\noindent  as 	
	\begin{equation*}
		 G(s):=
			\begin{cases}
				\ln\left(\dfrac{\Vol\left(\brm_s^M\right)}{\Vol\left(\brm_s^\omega\right)}\right),\quad\text{if } s>0,\\
				0,\qquad\qquad\qquad\quad\;\;\,\text{if } s=0.
			\end{cases}
		\end{equation*}
Then, if $\hrm_{\srm_s^\omega}\leq \hrm_{\srm_s^M} \,\,\forall s\in ]0, R]$, we have, applying  inequality (\ref{cor:MeanComp1}), that 
	
	\begin{equation*}
			G'(s) =  \dfrac{\Vol\left(\srm_s^M\right)}{\Vol\left(\brm_s^M\right)}-\dfrac{\Vol\left(\srm_s^\omega\right)}{\Vol\left(\brm_s^\omega\right)}\geq\,0\,\,\forall s\in ]0,R].
	\end{equation*}
	
	Hence, $G$ is non-decreasing in $]0,R]$. The rest of the proof follows as in \cite{Pa2}, using in this case the asymptotic expansion around $s=0$ for the volume of a geodesic $s$-ball, (see Theorem 9.12 in \cite{Gray1})  to conclude with a straightforward computation, that $\lim_{s \to 0} G(s)=0=G(0)$, and hence, that $G(s)$ is continuous and $G(s) \geq G(0)\,\,\forall s \in [0, R]$, so, given $s=r \in ]0,R]$, we have
	
	\begin{equation*}
		\Vol\left(\brm_r^\omega\right)\leq \Vol\left(\brm_r^M\right)\,\,\forall  r \in ]0,R].
	\end{equation*}
	
	Moreover, isoperimetric inequality (\ref{cor:MeanComp1}), together above inequality implies that 
	
$$\Vol(S^\omega_r) \leq \Vol(S^M_r) \,\,\,\forall r\leq R.$$ 

We are going to discuss the equality assertion: let us assume that $\hrm_{\srm_r^\omega}\leq \hrm_{\srm_r^M}\,\,\forall r \in ]0, R]$ and that there exists $r_0 \in ]0,R]$ such that $\Vol\left(\brm_{r_0}^\omega\right)= \Vol\left(\brm_{r_0}^M\right)$. Then, $G(0)=G(r_0)=0$ and, as $G$ in non-decreasing, for all $r \in [0,r_0]$, we have
$$0=G(0) \leq G(r) \leq G(r_0)=0$$
\noindent so $G(r)=0 \,\,\forall r \in [0,r_0]$ and therefore, $G'(r)=0 \,\,\forall r \in [0,r_0]$ which implies that $\hrm_{\srm_r^\omega}= \hrm_{\srm_r^M}\,\,\forall r \in ]0, r_0]$.

	When we assume that $\hrm_{\srm_r^\omega}\geq \hrm_{\srm_r^M}\,\,\forall r \in ]0, R]$, we argue as before, inverting all the inequalities to conclude that $G$ is non-increasing in $]0,R]$ and hence 
	 \begin{equation*}
		\Vol\left(\brm_r^\omega\right)\, \geq\,\Vol\left(\brm_r^M\right)\,\,\forall  r \in ]0,R]
	\end{equation*}
	
	and $\Vol(S^\omega_r) \geq \Vol(S^M_r) \,\,\,\forall r\leq R$.
	
	The equality discussion is the same than above, mutatis mutandi.
	
\end{proof}

\begin{corollary}\label{cor:MeanComp2}

Let $(M^n,g)$ be a complete Riemannian manifold and let $(\MW,g_\omega)$ be a rotationally symmetric model space with center $o_\omega \in \MW$. Let $o \in M$ be a point in $M$ and let us suppose that  $inj(o) \leq inj(o_\omega)$. Let us consider a metric ball $B^M_R(o)$, with $R < inj(o) \leq inj(o_w)$. Let us suppose moreover that
	
	\begin{equation}\label{eq:meancurvatureconditions5}
		\hrm_{\srm_r^\omega}\leq\,(\geq)\, \hrm_{\srm_r^M}\quad\text{for all}\quad 0 < r \leq R.
	\end{equation}

	Then, if there exists $p \in B^M_R(o)$ such that equality $\E^\omega_R(p)=E^M_R(p)$ holds, we have, for all   $r \in ]0,R]$:
	
	\begin{enumerate}
		\item  The equalities $\E^\omega_r=E^M_r$ on $B^M_r(o)$.
		
		\item The isoperimetric equalities
		
		$$\dfrac{\Vol\left(\brm_r^\omega(o_\omega)\right)}{\Vol\left(\srm_r^\omega(o_\omega)\right)\,}=\,\dfrac{\Vol\left(\brm_r^M(o)\right)}{\Vol\left(\srm_r^M(o)\right)}.$$
		
		\item  The volume equalities  $\Vol\left(\brm_r^\omega\right)\, =\,\Vol\left(\brm_r^M\right)$
	and $\Vol(S^\omega_r) = \Vol(S^M_r) $.
	\end{enumerate}
\end{corollary}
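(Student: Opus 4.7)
The plan is to bootstrap from the single-point equality up to equality of mean curvatures at every radius $r\in\,]0,R]$, and then propagate this to the three asserted identities by reusing, at each smaller radius $r$, the machinery already set up in Section \ref{sec:MeanComp}.

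First I would invoke the equality case already established in Theorem \ref{teo:MeanComp}: the hypothesis $\E_R^\omega(p) = E_R^M(p)$ at one interior point forces $\E_R^\omega = E_R^M$ on all of $B^M_R(o)$, and by Proposition \ref{prop3.2} this yields the pointwise identity $H_{S^\omega_r(o_\omega)} = H_{S^M_r(o)}$ for every $r\in\,]0,R]$. This radius-wise mean-curvature identity is the key output; everything else downstream is a straightforward transfer of Section \ref{sec:MeanComp} arguments to the ball of radius $r$. Concretely, for assertion (1), fix any $r\in\,]0,R]$: the identity above in particular holds for all $s\in\,]0,r]$, so Proposition \ref{prop3.2} applied to $B^M_r(o)$ gives $\E_r^\omega = E_r^M$ on $B^M_r(o)$.

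For assertion (2), I would re-run the divergence-theorem chain from the proof of Corollary \ref{cor:MeanComp}, now using $\E_r^\omega$ as a test function on $B^M_r(o)$: assertion (1) yields $\Delta^M \E_r^\omega = -1$ pointwise on $B^M_r(o)$, so the string of inequalities
$$
\Vol(B^M_r) \,=\, \int_{B^M_r} -\Delta^M \E_r^\omega\, d\sigma \,=\, -\E_r^\omega{'}(r)\,\Vol(S^M_r) \,=\, q_\omega(r)\,\Vol(S^M_r)
$$
collapses into equalities, and after rearrangement this is precisely the isoperimetric equality of (2).

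For assertion (3), I would reuse the auxiliary function $G(s):=\ln\bigl(\Vol(B^M_s)/\Vol(B^\omega_s)\bigr)$ from Corollary \ref{cor:MeanComp}. The isoperimetric equality (2), valid at every $r\in\,]0,R]$, gives $G'(s)\equiv 0$ on $]0,R]$; together with the small-ball volume asymptotic yielding $G(0)=0$, continuity then forces $G\equiv 0$ on $[0,R]$, i.e.\ $\Vol(B^\omega_r)=\Vol(B^M_r)$, and combining this with (2) immediately produces $\Vol(S^\omega_r)=\Vol(S^M_r)$. The $\leq$ and $\geq$ variants of the hypothesis are handled simultaneously since the mean-curvature identity extracted from Theorem \ref{teo:MeanComp} is direction-agnostic. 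No real obstacle is anticipated here: once the radius-wise mean-curvature identity is in hand, each conclusion amounts to quoting, at the new radius $r$, a calculation already performed in the preceding proofs of the section.
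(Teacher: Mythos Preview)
Your proposal is correct and follows essentially the same route as the paper's proof: both first extract the radius-wise mean-curvature identity from the equality case of Theorem \ref{teo:MeanComp} and Proposition \ref{prop3.2}, then reapply Proposition \ref{prop3.2} at each $r$ for (1), rerun the divergence-theorem computation from Corollary \ref{cor:MeanComp} for (2), and use the auxiliary function $G$ to obtain (3). The only cosmetic difference is that the paper deduces $\Vol(S^\omega_r)=\Vol(S^M_r)$ by differentiating the ball-volume equality in $r$, whereas you combine the ball-volume equality with the isoperimetric equality (2); both are immediate.
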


\begin{proof}

 First of all, equality assertion in Theorem \ref{teo:MeanComp} states that, as we are assuming one of the inequalities in  (\ref{eq:meancurvatureconditions5}), then  if there exists $p \in B^M_R(o)$ such that equality $\E^\omega_R(p)=E^M_R(p)$ holds, we con conclude the equality $\E^\omega_R=E^M_R$ on $B^M_R(o)$ and applying Proposition \ref{prop3.2}, from this equality,  we have equality $\hrm_{\srm_r^\omega}= \hrm_{\srm_r^M} \,\,\forall r\in ]0, R]$. This last equality implies that, given any fixed $r \in ]0,R]$, we have the equalities $\hrm_{\srm_s^\omega}= \hrm_{\srm_s^M} \,\,\forall s\in ]0, r]$ and hence, by Proposition \ref{prop3.2} again, we obtain $\E^\omega_r=E^M_r$ on $B^M_r(o)$.

On the other hand, equality $\E^\omega_R=E^M_R\,\,\text{on}\,\, B^M_R(o)$ implies that $\Delta^M\E_R^\omega=-1=\Delta^M E^M_R$ on $B^M_R(o)$, which implies in its turn that
$$\Vol\left(\brm_{R}^M\right) = \int_{\brm_{R}^M}-\Delta^M\E_{R}^\omega\,d\sigma=-\E_{R}^\omega{'}({R})\Vol\left(\srm_{R}^M\right)$$
\noindent and hence, by Proposition \ref{prop:MeanTorEqualities}
$$\dfrac{\Vol\left(\brm_R^\omega(o_\omega)\right)}{\Vol\left(\srm_R^\omega(o_\omega)\right)\,}=\,\dfrac{\Vol\left(\brm_R^M(o)\right)}{\Vol\left(\srm_R^M(o)\right)}.$$

Moreover, fixing $r \in ]0,R]$, we know that, as $\E^\omega_R=E^M_R\,\,\text{on}\,\, B^M_R(o)$, then $\E^\omega_r=E^M_r\,\,\text{on}\,\, B^M_r(o)$ applying Proposition \ref{prop3.2}, and this equality implies equality
$$\dfrac{\Vol\left(\brm_r^\omega(o_\omega)\right)}{\Vol\left(\srm_r^\omega(o_\omega)\right)\,}=\,\dfrac{\Vol\left(\brm_r^M(o)\right)}{\Vol\left(\srm_r^M(o)\right)}$$
\noindent with the same argument than above.

\item Finally,  as equality $\E^\omega_R=E^M_R\,\,\text{on}\,\, B^M_R(o)$ implies that $\E^\omega_r=E^M_r\,\,\text{on}\,\, B^M_r(o)\,\,\forall r\in ]0,R]$, then, if we define \begin{equation*}
		 G(r):=
			\begin{cases}
				\ln\left(\dfrac{\Vol\left(\brm_r^M\right)}{\Vol\left(\brm_r^\omega\right)}\right),\quad\text{if } r\in ]0,R],\\
				0,\qquad\qquad\qquad\quad\;\;\,\text{if } r=0,
			\end{cases}
		\end{equation*}
		\noindent then $G'(r)=0 \,\forall r \in ]0,R]$, and hence, $G(r)=0 \,\forall r \in ]0,R]$, so $\Vol\left(\brm_r^\omega\right)\, =\,\Vol\left(\brm_r^M\right)\,\,\forall  r \in ]0,R]$ and differentiating with respect the parameter $r$, $\Vol(S^\omega_r) = \Vol(S^M_r) \,\,\,\forall r\leq R$.

\end{proof}

To finish this section, we present the following property satisfied by the symmetrization of the transplanted mean exit time function  $\E_R^\omega$. This result is an intrinsic version of Theorem 4.4 in \cite{HuMP1}, and it follows directly from this result, (see too Section 6 in \cite{HuMP1}). 

\begin{theorem}\label{cor:eqschwarz}
	Let $(M^n,g)$ be a complete Riemannian manifold and let $(\MW,g_\omega)$ be a rotationally symmetric model space with center $o_\omega \in \MW$. Let $o \in M$ be a point in $M$ and let us suppose that  $inj(o) \leq inj(o_\omega)$. Let us consider a metric ball $B^M_R(o)$, with $R < inj(o) \leq inj(o_w)$, and let us assume that there exists $B^\omega_{s(R)}(o_\omega)$, the Schwarz symmetrization of $B^M_R$ in $M^n_\omega$. Let $\E_R^\omega{^*}\,:\,\brm_{s(R)}^\omega\longrightarrow\R$ be the symmetrization of the transplanted mean exit time function  $\E_R^\omega\,:\,\brm_R^M\longrightarrow\R$. Then
	
	\begin{equation}\label{eq:corschwarz}
		\int_{\brm_R^M}\E_R^\omega\,d\sigma=\int_{\brm_{s(R)}^\omega}\E_R^\omega{^*}\,d\widetilde{\sigma}.
	\end{equation}
\end{theorem}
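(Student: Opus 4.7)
The plan is to deduce Theorem~\ref{cor:eqschwarz} from the equimeasurability property already established in Theorem~\ref{prop:Propietatssymmobjects}(2) together with the layer cake representation of the integral of a non-negative function. First I would verify that $\mathcal{E}_R^\omega$ is indeed a non-negative smooth function on $B_R^M$ to which the symmetrization framework of Subsection~\ref{subsec:Symm} applies: by Proposition~\ref{prop:MeanTorEqualities} the function $E_R^\omega(r)=\int_r^R q_\omega(t)\,dt$ is non-increasing on $[0,R]$ with $E_R^\omega(R)=0$, so $\mathcal{E}_R^\omega(x)=E_R^\omega(r_o(x))\geq 0$ on $B_R^M$ and vanishes on $\partial B_R^M$. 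Hence Definitions~\ref{def:simfuncprevi}--\ref{def:simfunc} produce a well-defined symmetrization $\mathcal{E}_R^\omega{^*}:B_{s(R)}^\omega\to\R$.

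Next I would apply the layer cake formula to both sides. For any non-negative measurable function $f$ on a precompact domain one has
\begin{equation*}
\int f\,d\sigma = \int_0^{\infty}\Vol\bigl(\{f\geq t\}\bigr)\,dt.
\end{equation*}
Applying this to $\mathcal{E}_R^\omega$ on $B_R^M$ and to $\mathcal{E}_R^\omega{^*}$ on $B_{s(R)}^\omega$ gives
\begin{equation*}
\int_{B_R^M}\mathcal{E}_R^\omega\,d\sigma=\int_0^{T}\Vol_M\bigl(\{x\in B_R^M:\mathcal{E}_R^\omega(x)\geq t\}\bigr)\,dt,
\end{equation*}
\begin{equation*}
\int_{B_{s(R)}^\omega}\mathcal{E}_R^\omega{^*}\,d\widetilde{\sigma}=\int_0^{T}\Vol_{M_\omega^n}\bigl(\{x^*\in B_{s(R)}^\omega:\mathcal{E}_R^\omega{^*}(x^*)\geq t\}\bigr)\,dt,
\end{equation*}
where $T=\sup_{B_R^M}\mathcal{E}_R^\omega=E_R^\omega(0)<\infty$. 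By Theorem~\ref{prop:Propietatssymmobjects}(2), the integrands on the right are equal for every $t\geq 0$, and integrating this pointwise equality in $t$ yields the claimed identity.

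The argument is essentially routine once the equimeasurability theorem is in hand; the only point that requires a modicum of care is making sure the layer cake formula is applicable, which reduces to checking that the superlevel sets of $\mathcal{E}_R^\omega$ are measurable and of finite volume. Both facts are immediate since $\mathcal{E}_R^\omega$ is continuous and $B_R^M$ is precompact. No genuine obstacle arises here; the statement is really a bookkeeping consequence of the fact that symmetrization preserves the distribution function of $\mathcal{E}_R^\omega$, and hence preserves any quantity, such as the $L^1$-integral, that depends only on that distribution.
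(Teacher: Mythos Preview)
Your argument is correct. The paper itself does not give a proof of this theorem; it simply remarks that the result is an intrinsic version of Theorem~4.4 in \cite{HuMP1} and follows directly from that reference. Your layer-cake plus equimeasurability argument is the standard way to show that Schwarz symmetrization preserves the $L^1$-norm, and is exactly what underlies the cited result, so your proof is in fact more self-contained than what the paper provides.
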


\section{Moment spectrum comparison}\label{sec:TorRidCom1}\
\medskip

We are going to apply the Mean Exit comparisons obtained in Section 3 to obtain estimates of the moment spectrum, and the torsional rigidity  of a geodesic ball in a Riemannian manifold with bounds on the mean curvature of its extrinsic spheres.
\subsection{Estimates for the Poisson hierarchy and the moment spectrum of a geodesic ball}\label{sec:MomentsComp}\

\medskip

We shall start defining the so called  Poisson hierarchy of a domain in a Riemannian manifold, (see \cite{DLD}).
\begin{definition}
	Let $(M^n,g)$ be a complete Riemannian manifold and let $D\in M$ be a smooth precompact domain. We define the \emph{Poisson hierarchy for $D$} as the sequence $\{u_{k,D}\}_{k=1}^\infty$ of solutions of the following recurrence of boundary value problems
	
	
	
	\begin{equation}\label{poissonk}
		\begin{split}
			\Delta^Mu_{k,D}+ku_{k-1,D} & =  0,\,\, \text{on }\,\,D,\\
			u_{k,D}\lvert_{_{\partial D}} & = 0,
		\end{split}
	\end{equation}
	\noindent with $u_{0,D}=1 \,\,\text{on}\,\, D$.
	
	Let us note that $u_{1,D}=E_D^M$, i.e. the mean exit time function from $D$.
	
\end{definition}

As we did in Definition \ref{transplanted}, we transplant the Poisson hierarchy for the geodesic balls in a model space to the geodesic balls in a Riemanian manifold in the following way:

\begin{definition}

Let $(M^n,g)$ be a complete Riemannian manifold and let $(\MW,g_\omega)$ be a rotationally symmetric model space with center $o_\omega \in \MW$. Let $o \in M$ be a point in $M$ and let us suppose that  $inj(o) \leq inj(o_\omega)$. Let us consider a metric ball $B^M_R(o)$, with $R < inj(o) \leq inj(o_w)$.

Let us consider  the Poisson hierarchy for $B^\omega_R(o_\omega)$, namely,  the sequence $\{u_{k,R}^\omega\}_{k=1}^\infty$  which, for $k\geq 1$, are the solutions of 
	
	\begin{equation*}
		\begin{split}
			\Delta^{\MW}u_{k,R}^\omega+ku_{k-1,R}^\omega & =0,\,\,\text{on }\brm_R^\omega,\\
			u_{k,R}^\omega\lvert_{_{\srm_R^\omega}}& = 0,
		\end{split}
	\end{equation*}
	\noindent with $u^\omega_{0,R}=1 \,\,\text{on}\,\, \brm_R^\omega$.
	
	It is known, for all $k\geq 1$ that $u_{k,R}^\omega(x)=u_{k,R}^\omega\left(r_{o_\omega}(x)\right)$, i.e. $u_{k,R}^\omega$ is radial, and that $u_{k,R}^\omega{'}\leq 0$ (see Proposition 3.1 of \cite{HuMP2}). 
	
	Thus, for all $k\geq 1$, we can transplant these functions to $\brm_R^M(o) \subseteq M$ by defining 
	$$\bu_{k,R}^{\omega}: \brm_R^M(o) \rightarrow \erre$$ 
	\noindent as $\bu_{k,R}^{\omega}(x):=u_{k,R}^\omega\left(r_o(x)\right)\,\,\forall x \in \brm_R^M(o)$, where $r_o$ is the distance function to the center of $\brm_R^M(o)$. 
	
	The sequence $\{\bu_{k,R}^\omega\}_{k=1}^\infty$ is the {\em transplanted Poisson hierarchy for $B^M_R(o)$}.

\end{definition}

Associated to the Poisson hierarchy of a domain $D \subseteq M$ it is defined the exit time moment spectrum of this domain in the following way:

\begin{definition}
	Let $D\subseteq M$ a smooth precompact domain.  We define the \emph{moment spectrum of $D$} as the sequence of integrals $\{\CA_k(D)\}_{k=1}^{\infty}$ given by:
	
	\begin{equation*}
		\CA_k(D):=\int_D u_{k,D}\,d\sigma\, ,
	\end{equation*}
	
	\noindent where $\{u_{k,D}\}_{k=1}^\infty$ is the Poisson hierarchy for $D$.
	
	Let us note that  $\CA_1(D)$ is the torsional rigidity of $D$.
\end{definition}

We have the following comparison for the Poisson hierarchy of a geodesic ball in a Riemannian manifold:

\begin{theorem}\label{teo:TowMomComp}
	Let $(M^n,g)$ be a complete Riemannian manifold and let $(\MW,g_\omega)$ be a rotationally symmetric model space with center $o_\omega \in \MW$. Let $o \in M$ be a point in $M$ and let us suppose that  $inj(o) \leq inj(o_\omega)$. Let us consider a metric ball $B^M_R(o)$, with $R < inj(o) \leq inj(o_w)$. Let us suppose moreover that the mean curvatures of the geodesic spheres in $M$ and $\MW$ satisfies
	
	\begin{equation}\label{eq:meancurvatureconditions6}
		\hrm_{\srm_r^\omega}\leq\,(\geq)\, \hrm_{\srm_r^M}\quad\text{for all}\quad 0 < r \leq R.
	\end{equation}
	
	
	Then the Poisson hierarchy for $\brm_R^M(o)\subseteq M$,  $\{u_{k,R}\}_{k=1}^\infty$, and its transplanted Poisson hierarchy for $B^M_R(o)$, $\{\bu_{k,R}\}_{k=1}^\infty$ (and, fixed any $r \in ]0, R]$, the corresponding Poisson hierarchies for $B^M_r(o)$), satisfies
	
	\begin{enumerate}
		\item \label{teo:TowMomComp1} $\bu_{1,R}^{\omega}\geq\,(\leq)\,u_{1,R}$ on $\brm_R^M$.

		\item \label{teo:TowMomComp2}  For all $k\geq 2$, $\bu_{k,R}^{\omega}\geq\,(\leq)\,u_{k,R}$ on $\brm_R^M$.		
	
	\item If there exists $p \in B^M_R$ and $k_0 \geq 1$ such that $\bu_{k_0, R}^{\omega}(p)=u_{k_0,R}(p)$, then 
	$$\hrm_{\srm_r^\omega}=\hrm_{\srm_r^M} \, \forall r \in ]0,R]$$
	\noindent and
	$$\bu_{k,R}^{\omega}=u_{k,R}\,\text{ in} \,\,\,B^M_R\, \,\forall \, k \geq 1.$$ 
	
	\item If there exists $p \in B^M_R$ and $k_0 \geq 1$ such that $\bu_{k_0, R}^{\omega}(p)=u_{k_0,R}(p)$, then
	
	$$\bu_{k,r}^{\omega}=u_{k,r}\,\text{ in} \,\,\,B^M_r\, \,\forall \, k \geq 1\,\,\,\text{and}\,\,\,\forall r \in [0,R].$$ 
	
	\noindent and hence,
	$$\CA_k(B^\omega_r)=\CA(B^M_r)\,\,\forall r \in ]0,R]\,\,\text{and}\,\,\, k \geq 1\, .$$

	\end{enumerate}
\end{theorem}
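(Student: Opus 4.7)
The plan is to run an inductive argument on $k$ using the same machinery as in Theorem \ref{teo:MeanComp}, and then to handle the equality case by a combination of strong maximum principle and backward induction on $k$.

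For assertion (\ref{teo:TowMomComp1}), I simply invoke Theorem \ref{teo:MeanComp} since $u_{1,R}=E_R^M$ and $\bar u^\omega_{1,R}=\tilde E_R^\omega$. For assertion (\ref{teo:TowMomComp2}), I proceed by induction. Assuming $\bar u^\omega_{k-1,R}\geq u_{k-1,R}$ on $B^M_R$ (under the hypothesis $\hrm_{\srm^\omega_r}\leq\hrm_{\srm^M_r}$), I compute in polar coordinates using \eqref{lapeq} and Proposition \ref{lapmean} that, for $(r,\bar\theta)\in(0,R]\times\SN_1$,
\begin{equation*}
\Delta^M \bar u^\omega_{k,R}(r,\bar\theta) = u^\omega_{k,R}{''}(r)+(n-1)H_{\srm^M_r}(r,\bar\theta)\,u^\omega_{k,R}{'}(r).
\end{equation*}
Substituting the ODE $u^\omega_{k,R}{''}+(n-1)\eta_\omega\,u^\omega_{k,R}{'}=-k u^\omega_{k-1,R}$ satisfied by $u^\omega_{k,R}$ on $B^\omega_R$, I obtain
\begin{equation*}
\Delta^M \bar u^\omega_{k,R}= -k\,\bar u^\omega_{k-1,R}+(n-1)\bigl(\hrm_{\srm^M_r}-\hrm_{\srm^\omega_r}\bigr)\,u^\omega_{k,R}{'}(r).
\end{equation*}
Since $u^\omega_{k,R}{'}\leq 0$ and the curvature difference is $\geq 0$, the last term is $\leq 0$; combining with the inductive bound $\bar u^\omega_{k-1,R}\geq u_{k-1,R}$ gives $\Delta^M(u_{k,R}-\bar u^\omega_{k,R})\geq 0$. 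Because both functions vanish on $\srm^M_R$, the strong maximum principle (Theorem \ref{th:mp}) yields $u_{k,R}\leq \bar u^\omega_{k,R}$ on $B^M_R$. The reverse inequalities follow mutatis mutandis.

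For assertion (3), suppose $\bar u^\omega_{k_0,R}(p)=u_{k_0,R}(p)$ at some interior point $p$. The subharmonicity of $u_{k_0,R}-\bar u^\omega_{k_0,R}$ together with Theorem \ref{th:mp}(i) forces $u_{k_0,R}=\bar u^\omega_{k_0,R}$ throughout $B^M_R$. To propagate backwards, I equate $\Delta^M u_{k_0,R}=\Delta^M\bar u^\omega_{k_0,R}$ via the formula above, obtaining
\begin{equation*}
u_{k_0-1,R}(x)-\bar u^\omega_{k_0-1,R}(x)=-\tfrac{n-1}{k_0}\bigl(\hrm_{\srm^M_r}-\hrm_{\srm^\omega_r}\bigr)(r_o(x),\bar\theta)\,u^\omega_{k_0,R}{'}(r_o(x))\geq 0,
\end{equation*}
which combined with the inequality from assertion (\ref{teo:TowMomComp2}) forces $u_{k_0-1,R}=\bar u^\omega_{k_0-1,R}$. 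Iterating down to $k=1$ gives $u_{1,R}=\bar u^\omega_{1,R}$, and Theorem \ref{teo:MeanComp} (equality case) then delivers $\hrm_{\srm^\omega_r}=\hrm_{\srm^M_r}$ for every $r\in(0,R]$. With this curvature equality in hand I go forwards: the formula for $\Delta^M\bar u^\omega_{k,R}$ collapses to $-k\,\bar u^\omega_{k-1,R}$, so $\bar u^\omega_{k,R}$ solves the same Dirichlet problem \eqref{poissonk} as $u_{k,R}$, giving $\bar u^\omega_{k,R}=u_{k,R}$ for every $k\geq 1$ by induction.

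For assertion (4), the pointwise mean-curvature equality holds on $(0,R]$ and hence on $(0,r]$ for every $r\in(0,R]$, so the forward induction of the previous paragraph applied on $B^M_r$ gives $\bar u^\omega_{k,r}=u_{k,r}$ there. To pass to the moment spectra, I invoke Corollary \ref{cor:MeanComp2}, which under mean-curvature equality provides $\Vol(\srm^M_t)=\Vol(\srm^\omega_t)$ for $0<t\leq r$, and then write out the integral in geodesic polar coordinates:
\begin{equation*}
\CA_k(B^M_r)=\int_0^r u^\omega_{k,r}(t)\,\Vol(\srm^M_t)\,dt=\int_0^r u^\omega_{k,r}(t)\,\Vol(\srm^\omega_t)\,dt=\CA_k(B^\omega_r).
\end{equation*}
The main delicate point I anticipate is the backward propagation step: it relies on the fact that assertion (\ref{teo:TowMomComp2}) gives a one-sided inequality for every $k$, so that combining this with the equation obtained from $\Delta^M u_{k_0,R}=\Delta^M\bar u^\omega_{k_0,R}$ pins the sign. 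If instead one tried to deduce the curvature equality directly from the $k_0$-th level (by exploiting strict negativity of $u^\omega_{k_0,R}{'}$), one would need a separate argument that this derivative does not vanish on $(0,R]$; the backward-induction route avoids that detour by reducing to the already-handled case $k=1$.
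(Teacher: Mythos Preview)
Your proof is correct, and for assertions (\ref{teo:TowMomComp1}), (\ref{teo:TowMomComp2}) and the forward-induction half of (3) and (4) it matches the paper's argument essentially line by line. The one substantive difference is in how you extract the mean-curvature equality in assertion (3). The paper, after obtaining $u_{k_0,R}=\bu_{k_0,R}^{\omega}$ via the strong maximum principle, compares $\Delta^M\bu_{k_0,R}^{\omega}$ with $\Delta^{\MW}u_{k_0,R}^{\omega}$ directly and divides through by $u_{k_0,R}^{\omega}{'}(r)$, invoking the strict inequality $u_{k_0,R}^{\omega}{'}(r)<0$ on $(0,R]$ to reverse the sign and conclude $\hrm_{\srm_r^\omega}\geq\hrm_{\srm_r^M}$. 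You instead run a backward induction: from $\Delta^M u_{k_0,R}=\Delta^M\bu_{k_0,R}^{\omega}$ you read off an explicit formula for $u_{k_0-1,R}-\bu_{k_0-1,R}^{\omega}$ whose sign, combined with the already-proved inequality at level $k_0-1$, forces equality; iterating down to $k=1$ you then invoke the equality case of Theorem \ref{teo:MeanComp}. Your route is slightly longer but, as you note, sidesteps the need to justify strict negativity of $u_{k,R}^{\omega}{'}$ on $(0,R]$ for general $k$ (the paper asserts this, citing Proposition 3.1 of \cite{HuMP2} only for the weak inequality $u_{k,R}^{\omega}{'}\leq 0$). Both approaches are valid; yours is more self-contained within the present paper. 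You also spell out the passage from $\bu_{k,r}^{\omega}=u_{k,r}$ to $\CA_k(\brm_r^M)=\CA_k(\brm_r^\omega)$ via Corollary \ref{cor:MeanComp2} and integration in polar coordinates, which the paper leaves as an implicit ``hence'' in the proof of (4).
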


\begin{proof}

	Statement \eqref{teo:TowMomComp1} is proved in Theorem \ref{teo:MeanComp}. 
	
	The proof of  statement \eqref{teo:TowMomComp2} follows using induction on $k$, as it is done in \cite{HuMP2}. Indeed, assuming that $\hrm_{\srm_r^\omega}\leq \hrm_{\srm_r^M}\,\,\forall r \in ]0,R]$ and as $\bu_{k,R}^{\omega}{'}(r)\leq 0\,\,\forall r \in ]0,R]$, we have that
	
	\begin{equation*}
		\bu_{k,R}^{\omega}{'}(r)\hrm_{\srm_r^\omega}\geq\bu_{k,R}^{\omega}{'}(r)\hrm_{\srm_r^M}\,\,\forall r \in ]0,R]
	\end{equation*}
	and then, by equations  (\ref{eq:WarpMean}) and (\ref{laplacemodel}) and Proposition \ref{lapmean}, we have, for all $k\geq 2$:
	
	\begin{equation}\label{eq:TowMomLap}
		\begin{aligned}
			\Delta^M\bu_{k,R}^{\omega} & =\bu_{k,R}^{\omega}{''}(r)+(n-1)\hrm_{\srm_r^M}\,\bu_{k,R}^{\omega}{'}(r)\leq\bu_{k,R}^{\omega}{''}(r)+(n-1)\hrm_{\srm_r^\omega}\bu_{k,R}^{\omega}{'}(r)\\& =\Delta^{\MW}u_{k,R}^\omega=-ku_{k-1}^\omega(r)=-k\bu_{k-1}^{\omega}(r)\,\,\forall r \in ]0,R].
		\end{aligned}
	\end{equation}

	Now remember that  $\bu_1^{\omega}\geq u_1$ on $\brm_R^M$ and let us suppose that 
	$$\bu_{k,R}^{\omega}\geq u_{k,R} \,\,\text{on} \,\,\brm_R^M.$$
	
	Then, by induction with $k+1$ and using equation (\ref{eq:TowMomLap}), we have that
	
	\begin{equation}\label{eq:TowMomCompLapDes}
		\Delta^M\bu_{k+1,R}^{\omega}\leq-(k+1)\bu_{k,R}^{\omega}\leq-(k+1)u_{k,R}=\Delta^M u_{k+1,R}\,\,\text{on}\,\,\, \brm_R^M.
	\end{equation}
	
	Thus, $\Delta^M\left(u_{k+1,R}-\bu_{k+1,R}^{\omega}\right) \geq 0$ on $\brm_R^M$ and, applying the Maximum Principle, we obtain that
	$$\bu_{k+1,R}^{\omega}\geq u_{k+1,R}\, .$$
	
	When we assume that $\hrm_{\srm_r^\omega}\geq\,\hrm_{\srm_r^M}\,\,\forall r \in ]0,R]$, the argument is exactly the same, inverting all the inequalities. All this proves \eqref{teo:TowMomComp2}.
	
	We are going to prove assertion (3). Let us suppose that, as hypothesis, $\hrm_{\srm_r^\omega}\leq \hrm_{\srm_r^M}\,\,\forall r \in ]0,R]$, and that there exists $p \in B^M_R$ and $k_0 \geq 1$ such that $$\bu_{k_0,R}^{\omega}(p)=u_{k_0,R}(p).$$
	
	 We know that, for all $k \geq 1$, $\bu_{k,R}^{\omega}\geq u_{k,R} \,\,\text{on} \,\,\brm_R^M$. Then, as, on $\brm^M_R(o)$,  $\Delta^M \bu_{k,R}^{\omega} \leq -k\bu_{k-1}^{\omega}\leq -k\ u_{k-1}=\Delta^M u_{k,R}$ for all $k \geq 1$, we have, in particular,
	$$\Delta^M\left(u_{k_0,R}-\bu_{k_0,R}^{\omega}\right) \geq 0$$
	\noindent  on $\brm_R^M(o)$.
	
	Moreover, as $\bu_{k_0,R}^{\omega}\geq u_{k_0,R}$ on $\brm_R^M(o)$, then $u_{k_0,R}-\bu_{k_0,R}^{\omega} \leq 0$ on $\brm_R^M(o)$ and there exists $p \in B^M_R$  such that $(u_{k_0,R}-\bu_{k_0,R}^{\omega})(p)=0$. Then, applying the strong maximum principle, $\bu_{k_0,R}^{\omega}= u_{k_0,R}$ on $\brm_R^M$, because $u_{k_0,R}-\bu_{k_0,R}^{\omega}$ is constant on $\brm^M_R(o)$, continuous in $\overline{\brm^M_R(o)}$ and $u_{k_0,R}-\bu_{k_0,R}^{\omega}=0$ on $\srm^M_R(o)$.

	On the other hand,  as $\bu_{k_0-1}^{\omega}\geq u_{k_0-1}$ on $\brm_R^M$, we have, on $B^M_R(o)$:
	
	\begin{equation}
		\begin{split}
			\Delta^M\bu_{k_0,R}^{\omega}  &=\Delta^M u_{k_0,R}=-k_0 u_{k_0-1,R}\geq\\& -k_0 \bu_{k_0-1,R}^{\omega}=-k_0u_{k_0-1,R}^\omega=\Delta^{\MW}u_{k_0,R}^\omega
		\end{split}
	\end{equation}
	
	\noindent so, for all $r \in ]0,R]$:
	
	\begin{equation}
	\bu_{k_0,R}^{\omega}{''}(r)+(n-1)\hrm_{\srm_r^M}\,\bu_{k_0,R}^{\omega}{'}(r) \geq u_{k_0,R}^{\omega}{''}(r)+(n-1)\hrm_{\srm_r^\omega}u_{k_0,R}^{\omega}{'}(r).
	\end{equation}
	
	As $\bu_{k_0,R}^{\omega}{''}(r)=u_{k_0,R}^{\omega}{''}(r)$ and $\bu_{k_0,R}^{\omega}{'}(r)=u_{k_0,R}^{\omega}{'}(r)$ for all $r \in ]0,R]$, we conclude that 
	
	$$ \hrm_{\srm_r^M}\bu_{k_0,R}^{\omega}{'}(r) \geq \hrm_{\srm_r^\omega} u_{k_0,R}^{\omega}{'}(r) \,\,\forall r \in ]0,R]$$
	\noindent and hence, as $u_{k_0,R}^{\omega}{'}(r) < 0\,\,\forall r \in ]0,R]$, then
	$$\hrm_{\srm_r^\omega}\geq \hrm_{\srm_r^M}\,\,\forall r \in ]0,R].$$
	
	As, by hypothesis, $\hrm_{\srm_r^\omega}\leq \hrm_{\srm_r^M}\,\,\forall r \in ]0,R]$, we have finally that 
	$$\hrm_{\srm_r^\omega}= \hrm_{\srm_r^M}\,\,\forall r \in ]0,R].$$
	
	Now, to prove that $\bu_{k,R}^{\omega}=u_{k,R}$ on $B^M_R(o)$, we argue as follows:  as we know that $\hrm_{\srm_r^\omega}= \hrm_{\srm_r^M}\,\,\forall r \in ]0,R]$, let us apply Proposition \ref{prop3.2}, to have that $\bu_{1,R}^{\omega}=u_{1,R}$ on $B^M_R(o)$, and we procceed by induction: let us suppose that $\bu_{k,R}^{\omega}=u_{k,R}$ on $B^M_R(o)$. Let us see that $\bu_{k+1,R}^{\omega}=u_{k+1,R}$ on $B^M_R(o)$. For that, we compute
	
	\begin{equation}
	\begin{split}
			\Delta^M\bu_{k+1,R}^{\omega} & =\bu_{k+1,R}^{\omega}{''}(r)+\hrm_{\srm_r^M}\,\bu_{k+1,R}^{\omega}{'}(r) =\bu_{k+1,R}^{\omega}{''}(r)+\hrm_{\srm_r^\omega}\bu_{k+1,R}^{\omega}{'}(r)\\& =\Delta^{\MW}u_{k+1,R}^\omega=-(k+1)u_{k,R}^\omega=-(k+1)\bu_{k,R}^{\omega}\\&=-(k+1)u_{k,R}=\Delta^M u_{k+1,R}\,\,\text{on}\,\,B^M_R(o).
		\end{split}
	\end{equation}
	
	Hence $\Delta^M (\bu_{k+1,R}^{\omega}-u_{k+1,R})=0$ on $B^M_R(o)$ and as $\bu_{k+1,R}^{\omega}-u_{k+1,R}=0$ on $S^M_R(o)$, then, appliyng Maximum Principle again, we conclude that $\bu_{k+1,R}^{\omega}=u_{k+1,R}=0$ on $B^M_R(o)$.
	
	Finally, to prove assertion (4), let us assume that that $\hrm_{\srm_r^\omega}\leq \hrm_{\srm_r^M}\,\,\forall r \in ]0,R]$, and that there exists $p \in B^M_R$ and $k_0 \geq 1$ such that $$\bu_{k_0,R}^{\omega}(p)=u_{k_0,R}(p).$$
	\noindent As before, we conclude that 
	$$\hrm_{\srm_r^\omega}= \hrm_{\srm_r^M}\,\,\forall r \in ]0,R],$$
	\noindent and hence, fixing $r \in ]0,R]$, that 
	$$\hrm_{\srm_s^\omega}= \hrm_{\srm_s^M}\,\,\forall s \in ]0,r].$$
	\noindent Now, to prove that $\bu_{k,r}^{\omega}=u_{k,r}$ on $B^M_r(o)$, we argue as in the proof of (3): as we know that $\hrm_{\srm_s^\omega}= \hrm_{\srm_s^M}\,\,\forall s \in ]0,r]$, let us apply Proposition \ref{prop3.2}, to have that $\bu_{1,r}^{\omega}=u_{1,r}$ on $B^M_r(o)$, and we procceed by induction, as in the proof of assertion (3).

\end{proof}

As a consequence of the Theorem \ref{teo:TowMomComp} we have the following result, where it is proved that, under our hypotheses, any of the averaged moments of the geodesic balls determines its first Dirichlet eigenvalue:

\begin{corollary}\label{isoptors}
	Let $(M^n,g)$ be a complete Riemannian manifold and let $(\MW,g_\omega)$ be a rotationally symmetric model space with center $o_\omega \in \MW$. Let $o \in M$ be a point in $M$ and let us suppose that  $inj(o) \leq inj(o_\omega)$. Let us consider a metric ball $B^M_R(o)$, with $R < inj(o) \leq inj(o_w)$. Let us suppose moreover that the mean curvatures of the geodesic spheres in $M$ and $\MW$ satisfies
	
	\begin{equation}\label{eq:meancurvatureconditions7}
		\hrm_{\srm_r^\omega}\leq\,(\geq)\, \hrm_{\srm_r^M}\quad\text{for all}\quad 0 < r \leq R.
	\end{equation}
	
Then, for all $k\geq 1$,
	
	\begin{equation}\label{RigIsop}
		\dfrac{\CA_k\left(\brm_R^\omega\right)}{\Vol\left(\srm_R^\omega\right)}\geq\,(\leq)\,\dfrac{\CA_k\left(\brm_R^M\right)}{\Vol\left(\srm_R^M\right)}.
	\end{equation}
	
	Equality in any of  inequalities (\ref{RigIsop}) for some $k \geq 1$ implies  that 
	 $$\hrm_{\srm_R^\omega(o_\omega)}=\,\hrm_{\srm_R^M(o)}\,\,\,\text{for all }0<r\leq R$$
	 \noindent and hence, we have the equalities
	
	\begin{enumerate}
	
	\item Equality $\bu_{k,R}^{\omega}=u_{k,R}$ on $B^M_R(o)$ for all $k\geq 1$, and hence, equality $\bu_{k,r}^{\omega}=u_{k,r}$ on $B^M_r(o)$ for all $k\geq 1$ and for all $0 < r \leq R$.
	\item Equalities $\Vol\left(\brm_r^\omega\right)=\Vol\left(\brm_r^M\right)$ and $\Vol\left(\srm_r^\omega\right)=\Vol\left(\srm_r^M\right)\,\,\,\text{for all }\, 0<r \leq R$.
	\item Equalities $\CA_k\left(\brm_r^\omega\right)=\CA_k\left(\brm_r^M\right)$,  for all $k\geq 1$ and for all $0<r \leq R$.
	\item Equalities 
	$\lambda_1 (B^\omega_{r})=\lambda_1(B^M_{r})$ for all $0<r \leq R$.
	
	\noindent Namely, {\em one} value of $\dfrac{\CA_k\left(\brm_R^M(o)\right)}{\Vol\left(\srm_R^M(o)\right)}$ for some $k \geq 1$ determines the Poisson hierarchy, the volume, the $L^1$-moment spectrum and the first Dirichlet eigenvalue of the ball $B^M_r(o)$ for all $0<r \leq R$.
	\end{enumerate}
	
\end{corollary}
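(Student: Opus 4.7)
My plan is to first establish the averaged inequality (\ref{RigIsop}) by combining the pointwise comparison $u_{k,R}\le\bu_{k,R}^{\omega}$ from Theorem \ref{teo:TowMomComp} with a monotonicity of the sphere-volume ratio encoded directly in the mean curvature hypothesis, and then to read off the full rigidity list by feeding the equality into the rigidity clauses of Theorem \ref{teo:TowMomComp} together with the moment-spectrum formula (\ref{DirMoment2}) for $\lambda_1$.

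For the inequality under $\hrm_{\srm_r^\omega}\le\hrm_{\srm_r^M}$, I would transplant the model hierarchy and use Theorem \ref{teo:TowMomComp} in polar coordinates to obtain
\[
\CA_k(\brm_R^M)\;\le\;\int_{\brm_R^M}\bu_{k,R}^{\omega}\,d\sigma\;=\;\int_0^R u_{k,R}^\omega(r)\,\Vol(\srm_r^M)\,dr.
\]
Proposition \ref{lapmean} and the hypothesis give $\tfrac{d}{dr}\log\Vol(\srm_r^M)\ge\tfrac{d}{dr}\log\Vol(\srm_r^\omega)$, so $r\mapsto \Vol(\srm_r^M)/\Vol(\srm_r^\omega)$ is non-decreasing on $(0,R]$, and in particular $\Vol(\srm_r^M)\,\Vol(\srm_R^\omega)\le\Vol(\srm_r^\omega)\,\Vol(\srm_R^M)$ for $r\le R$. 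Multiplying this pointwise-in-$r$ inequality by the non-negative radial profile $u_{k,R}^\omega(r)$ (non-negativity follows from an iterated maximum principle on the model Poisson system) and integrating on $[0,R]$ yields
\[
\frac{1}{\Vol(\srm_R^M)}\int_0^R u_{k,R}^\omega(r)\,\Vol(\srm_r^M)\,dr\;\le\;\frac{1}{\Vol(\srm_R^\omega)}\int_0^R u_{k,R}^\omega(r)\,\Vol(\srm_r^\omega)\,dr\;=\;\frac{\CA_k(\brm_R^\omega)}{\Vol(\srm_R^\omega)},
\]
which together with the previous display is (\ref{RigIsop}). The reversed hypothesis reverses every step of the chain.

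Suppose now that equality holds in (\ref{RigIsop}) for some $k_0\ge 1$. Then both intermediate inequalities collapse; in particular, $\int_{\brm_R^M}\bigl(\bu_{k_0,R}^{\omega}-u_{k_0,R}\bigr)\,d\sigma=0$, and pointwise non-negativity of the integrand (again Theorem \ref{teo:TowMomComp}) forces $\bu_{k_0,R}^{\omega}=u_{k_0,R}$ everywhere on $\brm_R^M$. I would then invoke assertions (3) and (4) of Theorem \ref{teo:TowMomComp} at any point $p\in\brm_R^M$: a single pointwise equality at one level of the hierarchy simultaneously yields $\hrm_{\srm_r^\omega}=\hrm_{\srm_r^M}$ for every $r\in(0,R]$ and the full identities $\bu_{k,r}^{\omega}=u_{k,r}$ on $\brm_r^M$ for every $k\ge 1$ and every $0<r\le R$, which is item (1). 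Item (2) follows because the mean curvature equality allows me to apply Corollary \ref{cor:MeanComp} in both directions (equivalently, to apply Corollary \ref{cor:MeanComp2} using that $\E_r^\omega=E_r^M$ is the $k=1$ case of item (1)), and item (3) drops out by integrating item (1) against the now identical volume elements from item (2).

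Finally, I would read off item (4) directly from the formula (\ref{DirMoment2}): since the sequences $\{\CA_k(\brm_r^M)\}$ and $\{\CA_k(\brm_r^\omega)\}$ agree term by term for every $r\in(0,R]$, so do the quotients $k\CA_{k-1}/\CA_k$, and passing to the limit gives $\lambda_1(\brm_r^M)=\lambda_1(\brm_r^\omega)$. The only point I expect to require some care is verifying that the ratio-monotonicity argument in the second paragraph runs consistently under either sign of the mean curvature hypothesis, which is why keeping explicit track of $u_{k,R}^\omega\ge 0$ throughout is essential.
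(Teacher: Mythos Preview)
Your argument is correct, but the route to inequality (\ref{RigIsop}) is genuinely different from the paper's. The paper does \emph{not} integrate the pointwise comparison $u_{k,R}\le\bu_{k,R}^{\omega}$ and then appeal to monotonicity of $r\mapsto \Vol(\srm_r^M)/\Vol(\srm_r^\omega)$. Instead it climbs one level in the hierarchy: writing $\CA_k(\brm_R^M)=-\tfrac{1}{k+1}\int_{\brm_R^M}\Delta^M u_{k+1,R}$, it uses the Laplacian comparison $\Delta^M\bu_{k+1,R}^{\omega}\le\Delta^M u_{k+1,R}$ from Theorem \ref{teo:TowMomComp}, applies the divergence theorem to the \emph{radial} transplanted function $\bu_{k+1,R}^{\omega}$, and picks off the boundary term $-\tfrac{1}{k+1}\bu_{k+1,R}^{\omega\prime}(R)\Vol(\srm_R^M)$; the same divergence computation in the model identifies $-\tfrac{1}{k+1}u_{k+1,R}^{\omega\prime}(R)$ exactly with $\CA_k(\brm_R^\omega)/\Vol(\srm_R^\omega)$. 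Your approach avoids the $(k{+}1)$-st function and the divergence theorem altogether, trading them for the co-area formula and the log-derivative inequality $\tfrac{d}{dr}\log\Vol(\srm_r^M)\ge (n-1)\hrm_{\srm_r^\omega}=\tfrac{d}{dr}\log\Vol(\srm_r^\omega)$ (which does require integrating Proposition \ref{lapmean} over $\SN_1$, a detail worth spelling out since $\hrm_{\srm_r^M}$ is not constant on the sphere). What the paper's method buys is the exact boundary-derivative formula for the averaged moment in the model, which it reuses elsewhere; what your method buys is a shorter, more self-contained chain of inequalities and a slightly cleaner equality analysis at level $k_0$ rather than $k_0{+}1$. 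The rigidity portion of your argument matches the paper's, going through assertions (3)--(4) of Theorem \ref{teo:TowMomComp}, Corollary \ref{cor:MeanComp2}, and formula (\ref{DirMoment2}).
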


\begin{proof}

In the model spaces we have that $\Delta^{M_{\omega}} u_{k+1,R}^{\omega}=-(k+1) u_{k}^{\omega}$ on the geodesic ball $\brm^{M_{\omega}}_R(o_{\omega})$, so, applying Divergence theorem in this setting ,  we obtain 
	
	\begin{equation*}
		\CA_k\left(\brm_R^\omega\right)=\int_{\brm_R^\omega} u_{k,R}^\omega\,d\widetilde{\sigma}=-\dfrac{1}{k+1}\int_{\brm_R^\omega}\Delta^{\MW}u_{k+1,R}^\omega\,d\widetilde{\sigma}=-\dfrac{1}{k+1}\,u_{k+1,R}^\omega{'}(R)\Vol\left(\srm_R^\omega\right).
	\end{equation*}
	
	\noindent Therefore, for all $k \geq 1$,
	
	\begin{equation}\label{eq:TowMomDerviadaMW}
		-\dfrac{1}{k+1}u_{k+1,R}^\omega{'}(R)=\dfrac{\CA_k\left(\brm_R^\omega\right)}{\Vol\left(\srm_R^\omega\right)}.
	\end{equation}
	
	Assuming now as hypothesis one of the inequalities in (\ref{eq:meancurvatureconditions7}), we obtain correspondingly the inequalities 
$$\Delta^M\bu_{k+1,R}^{\omega}\leq\,(\geq)\,\Delta^M u_{k+1,R}\,\,\text{on}\,\,\, \brm_R^M.$$
	
	Then, using the Divergence theorem and that $\bu_{k+1,R}^{\omega}$ is radial in $\brm_R^M$, we have
	
	\begin{equation}\label{ineqisopTors}
		\begin{split}
			\CA_k\left(\brm_R^M\right) & =\int_{\brm_R^M}u_{k,R}\,d\sigma=-\dfrac{1}{k+1}\int_{\brm_R^M}\Delta^M u_{k+1,R}d\sigma\\&
			 \leq\,(\geq)\,-\dfrac{1}{k+1}\int_{\brm_R^M}\Delta^M\bu_{k+1,R}^{\omega}\,d\sigma\\&=-\dfrac{1}{k+1}\int_{\srm_R^M}\ep{\nabla^M\bu_{k+1,R}^{\omega},\nabla^Mr}\,d\sigma_r \\&= -\dfrac{1}{k+1}\,\bu_{k+1,R}^{\omega}{'}(R)\Vol\left(\srm_R^M\right).
		\end{split}
	\end{equation}
	
	Then, using equation \eqref{eq:TowMomDerviadaMW}, and that $\bu_{k+1,R}^\omega{'}(R)=u_{k+1,R}^\omega{'}(R)$, we finally obtain that
	
	\begin{equation*}
		\CA_k\left(\brm_R^M\right)\leq\,(\geq)\,\dfrac{\CA_k\left(\brm_R^\omega\right)}{\Vol\left(\srm_R^\omega\right)}\,\Vol\left(\srm_R^M\right).
	\end{equation*}
	
	We are going to discuss the equality case: assuming that $\hrm_{\srm_r^\omega}\leq\,\hrm_{\srm_r^M},\, \text{for all }0<r\leq R$, equality 
	\begin{equation}
		\dfrac{\CA_{k_0}\left(\brm_R^\omega\right)}{\Vol\left(\srm_R^\omega\right)}=\,\dfrac{\CA_{k_0}\left(\brm_R^M\right)}{\Vol\left(\srm_R^M\right)}
	\end{equation}
	\noindent for some $k_0 \geq 1$ implies that all inequalities in (\ref{ineqisopTors}) are equalities for this fixed $k_0$, so $\bu_{k_0+1,R}^{\omega}=u_{k_0+1,R}$ on $B^M_R(o)$.
	Applying assertion (3) in Theorem \ref{teo:TowMomComp}, we have that $\hrm_{\srm_r^\omega}=\,\hrm_{\srm_r^M}\,\,\,\text{for all }0<r \leq R$ and that $\bu_{k,R}^{\omega}=u_{k,R}$ on $B^M_R(o)$ for all $k\geq 1$. In particular, $\bu_{1,R}^{\omega}=u_{1,R}$ on $B^M_R(o)$, so, by Corollary \ref{cor:MeanComp2}, 
	$\Vol\left(\srm_r^\omega\right)=\Vol\left(\srm_r^M\right)$ and $\Vol\left(\brm_r^\omega\right)=\Vol\left(\brm_r^M\right)$ for all $r \in ]0, R]$ and, hence,  for all $k \geq 1$,
	
	\begin{equation}\label{ineqisopTors2}
		\begin{split}
			\CA_k\left(\brm_R^M\right) & =\int_{\brm_R^M}u_{k,R}\,d\sigma=-\dfrac{1}{k+1}\int_{\brm_R^M}\Delta^M u_{k+1,R}d\sigma
			 \\&=\,-\dfrac{1}{k+1}\int_{\brm_R^M}\Delta^M\bu_{k+1,R}^{\omega}\,d\sigma= -\dfrac{1}{k+1}\,\bu_{k+1,R}^{\omega}{'}(R)\Vol\left(\srm_R^M\right)\\ &=\dfrac{\CA_k\left(\brm_R^\omega\right)}{\Vol\left(\srm_R^\omega\right)}\,\Vol\left(\srm_R^M\right)= \CA_k\left(\brm_R^\omega\right).
		\end{split}
	\end{equation}
	
	Moreover, applying assertion (4) in Theorem \ref{teo:TowMomComp}, from equality $\bu_{k_0+1,R}^{\omega}=u_{k_0+1,R}$ on $B^M_R(o)$ we can deduce that $\bu_{k,r}^{\omega}=u_{k,r}$ on $B^M_r(o)$ for all $k\geq 1$ and for all $r \in ]0,R]$, so given $r \in ]0,R]$, and for all $k \geq 1$,
	
	\begin{equation}\label{ineqisopTors3}
		\begin{split}
			\CA_k\left(\brm_r^M\right) & =\int_{\brm_r^M}u_{k,r}\,d\sigma=-\dfrac{1}{k+1}\int_{\brm_r^M}\Delta^M u_{k+1,r}d\sigma
			 \\&=\,-\dfrac{1}{k+1}\int_{\brm_r^M}\Delta^M\bu_{k+1,r}^{\omega}\,d\sigma= -\dfrac{1}{k+1}\,\bu_{k+1,r}^{\omega}{'}(r)\Vol\left(\srm_r^M\right)\\ &=\dfrac{\CA_k\left(\brm_r^\omega\right)}{\Vol\left(\srm_r^\omega\right)}\,\Vol\left(\srm_r^M\right)= \CA_k\left(\brm_r^\omega\right).
		\end{split}
	\end{equation}
	

Finally, to prove the last assertion of the Theorem, we know that, assuming that $\hrm_{\srm_r^\omega}\leq \hrm_{\srm_r^M}\,\,\text{for all}\,\,0 < r \leq R$, the equality 
\begin{equation*}
		\dfrac{\CA_{k_0}\left(\brm_R^\omega\right)}{\Vol\left(\srm_R^\omega\right)}=\dfrac{\CA_{k_0}\left(\brm_R^M\right)}{\Vol\left(\srm_R^M\right)}
	\end{equation*}
	\noindent  implies equalities  $\CA_k\left(\brm_r^\omega\right)=\CA_k\left(\brm_r^M\right)$,  for all $k\geq 1$, and for all $r \in ]0,R]$. 
Then, given $B^M_r \subseteq M$ in a Riemannian manifold $(M,g)$, (see \cite{HuMP3} and \cite{BGJ}):
\begin{equation}\label{gim2}
\begin{aligned}
\lambda_1(B^M_r)&=\lim_{k \to \infty}\dfrac{k\CA_{k-1}\left(\brm_r^M\right)}{\CA_{k}\left(\brm_r^M\right)}\\&=\lim_{k \to \infty}\dfrac{k\CA_{k-1}\left(\brm_r^\omega\right)}{\CA_{k}\left(\brm_r^\omega\right)}=\lambda_1(B^\omega_r).
\end{aligned}
\end{equation}

\end{proof}

\subsection{An estimate for the torsional rigidity of a geodesic R-ball }\label{subsec:TRcomp}\

\medskip

We are going to bound the torsional rigidity of a metric ball $\brm_R^M$ in a Riemannian manifold $(M,g)$ in Theorem \ref{teo:2.1}, assuming  that the mean curvature of the geodesic spheres in this Riemnnian manifold is bounded from above or from below by the corresponding mean curvature of the geodesic spheres in a symmetric model space $(\MW, g_{\omega})$ which is balanced from above.  

This result can be considered as a continuation of the  intrinsic comparison done in Section 6 of the paper \cite{HuMP1}. In that paper it were obtained upper and lower bounds for the torsional rigidity of a metric ball $\brm_R^M(o)$ in a Riemannian manifold $(M,g)$ with a pole $o \in M$ under more restrictive conditions, namely, assuming that  the radial sectional curvatures were bounded above or below by the corresponding radial sectional curvatures of a suitable model space.

To do that, let us consider a symmetric model space rearrangement of the metric ball $\brm_R^M$ as it has been described in Definition \ref{def:Symm} and Definition \ref{def:simfunc}, namely, a symmetrization of $\brm_R^M$ which is a geodesic $s(R)$-ball in the model space $\MW$ such that $\Vol\left(\brm_R^M(o)\right)=\Vol\left(\brm_{s(R)}^\omega(o_\omega)\right)$, together the symmetrization $\E_R^\omega{^*}\,:\,\brm_{s(R)}^\omega\longrightarrow\R$ of the transplanted mean exit time function $\E_R^\omega\,:\,\brm_R^M\longrightarrow\R$. It is evident that Proposition \ref{prop:ineqpsiE}, Theorem \ref{teo:2.1} and Corollary \ref{cor:2.1} make sense for those geodesic balls $B^M_R(o)$ which posses a Schwarz symmerization $\brm_{s(R)}^\omega(o_\omega)$.

Then, we have the following  comparison. Its proof follows closely the lines of the proof of Propositions 5.2 and 5.4 in \cite{HuMP1}, we have included it because the changes due to its intrinsic character,  the different assumptions on the curvatures we have assumed here and the new analysis of the equality we present in this case.

\begin{proposition}\label{prop:ineqpsiE}
	
	Let $(M^n,g)$ be a complete Riemannian manifold and let $(\MW,g_\omega)$ be a rotationally symmetric model space with center $o_\omega \in \MW$, balanced from above. Let $o \in M$ be a point in $M$ and let us suppose that  $inj(o) \leq inj(o_\omega)$. Let us consider a metric ball $B^M_R(o)$, with $R < inj(o) \leq inj(o_w)$. Let us suppose moreover that the mean curvatures of the geodesic spheres in $M$ and $M_{\omega}$ satisfies
	
	\begin{equation}\label{eq:meancurvatureconditions8}
		\hrm_{\srm_r^\omega}\leq\,(\geq)\, \hrm_{\srm_r^M}\quad\text{for all}\quad 0 < r \leq R.
	\end{equation}
	
	Then

		\begin{equation}\label{eq:ineqpsiE}
			\E_R^\omega{^*}{'}(\widetilde{r})\geq\,(\leq)\,E_{s(R)}^\omega{'}(\widetilde{r})\quad\text{for all }\,\,\widetilde{r}\in(0,s(R))
		\end{equation}
		
		\noindent and hence, 
		\begin{equation}\label{eq:ineqpsiE2}
			\E_R^\omega{^*}(\widetilde{r})\leq\,(\geq)\,E_{s(R)}^\omega(\widetilde{r})\quad\text{for all }\,\,\widetilde{r}\in[0,s(R)].
		\end{equation}

\noindent Equality in any of the  inequalities (\ref{eq:ineqpsiE2}) implies the equality among the radius $s(R)=R$ and the equality
$$\hrm_{\srm_r^\omega}=\,\hrm_{\srm_r^M},\;\,\text{for all }0<r\leq R$$
\noindent and hence, we have the equalities among the  volumes $\Vol\left(\brm_{r}^\omega \right)=\Vol\left(\brm_{r}^M\right)\,\,\forall r \in ]0,R]$ and $\Vol\left(\srm_r^\omega\right)=\Vol\left(\srm_r^M\right)\,\,\,\text{for all }\, 0<r \leq R$.

	\end{proposition}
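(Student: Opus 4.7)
The plan is to reduce the derivative bound \eqref{eq:ineqpsiE} to a single pointwise comparison between two values of $q_\omega$, and then close the argument using the balanced-from-above assumption \eqref{eq:balancedabovediff} together with the isoperimetric inequality \eqref{cor:MeanComp1} and the volume comparison \eqref{ineqVolumes} supplied by Corollary \ref{cor:MeanComp}; the integral inequality \eqref{eq:ineqpsiE2} then follows by integrating the derivative bound against the common boundary value.

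The first step is to express $\E_R^\omega{^*}$ explicitly. Because $\E_R^\omega$ is radial and strictly decreasing in $r_o$, every super-level set $\{\E_R^\omega\ge t\}$ is the geodesic ball $\brm_{\rho(t)}^M(o)$ with $E_R^\omega(\rho(t))=t$, whose Schwarz symmetrization is $\brm_{\widetilde r(t)}^\omega(o_\omega)$ characterized by $\Vol(\brm_{\widetilde r(t)}^\omega)=\Vol(\brm_{\rho(t)}^M)$. Hence $\E_R^\omega{^*}(\widetilde r)=E_R^\omega(\rho(\widetilde r))$, where $\rho$ is the monotone function defined by $\Vol(\brm_{\rho(\widetilde r)}^M)=\Vol(\brm_{\widetilde r}^\omega)$. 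Differentiating this volume identity and using $E_R^\omega{'}(r)=-q_\omega(r)$ from Proposition \ref{prop:MeanTorEqualities} produces
\[
\E_R^\omega{^*}{'}(\widetilde r)\;=\;-q_\omega(\rho)\,\frac{\Vol(\srm_{\widetilde r}^\omega)}{\Vol(\srm_{\rho}^M)},\qquad E_{s(R)}^\omega{'}(\widetilde r)\;=\;-q_\omega(\widetilde r),
\]
so in the first case \eqref{eq:ineqpsiE} becomes the pointwise claim $q_\omega(\rho)\,\Vol(\srm_{\widetilde r}^\omega)\le q_\omega(\widetilde r)\,\Vol(\srm_{\rho}^M)$, and the reverse inequality in the second.

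This pointwise inequality will be obtained by chaining two applications of the hypotheses. In the first case, the volume bound \eqref{ineqVolumes} together with $\Vol(\brm_{\widetilde r}^\omega)=\Vol(\brm_{\rho}^M)$ forces $\rho\le\widetilde r$, so \eqref{eq:balancedabovediff} gives $q_\omega(\rho)\le q_\omega(\widetilde r)$; the isoperimetric inequality \eqref{cor:MeanComp1} at the radius $\rho$ yields $\Vol(\srm_{\rho}^M)\ge\Vol(\brm_{\rho}^M)/q_\omega(\rho)=q_\omega(\widetilde r)\Vol(\srm_{\widetilde r}^\omega)/q_\omega(\rho)$, and multiplying by $q_\omega(\widetilde r)$ and invoking $q_\omega(\widetilde r)^2\ge q_\omega(\rho)^2$ delivers the required comparison. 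The symmetric argument, in which $\rho\ge\widetilde r$ and every inequality reverses, handles the second case. Integrating \eqref{eq:ineqpsiE} from $\widetilde r$ to $s(R)$ and using the common boundary value $\E_R^\omega{^*}(s(R))=E_{s(R)}^\omega(s(R))=0$ yields \eqref{eq:ineqpsiE2}.

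The hardest step will be the equality case. If $\E_R^\omega{^*}(\widetilde r_0)=E_{s(R)}^\omega(\widetilde r_0)$ at some $\widetilde r_0\in[0,s(R)]$, then $\phi:=E_{s(R)}^\omega-\E_R^\omega{^*}$ is non-negative, vanishes at $\widetilde r_0$ and at $s(R)$, and is non-increasing by \eqref{eq:ineqpsiE}, so $\phi\equiv 0$ on $[\widetilde r_0,s(R)]$; this forces both chained estimates of the previous paragraph to be equalities throughout that interval. In particular the isoperimetric identity $q_\omega(\rho(\widetilde r))=\Vol(\brm_{\rho(\widetilde r)}^M)/\Vol(\srm_{\rho(\widetilde r)}^M)$ holds for $\widetilde r\in(\widetilde r_0,s(R))$, and since $\rho(\widetilde r)\to R$ as $\widetilde r\to s(R)$, continuity of the volumes propagates the equality to $r_0=R$. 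The equality statement of Corollary \ref{cor:MeanComp} then forces $\hrm_{\srm_r^\omega}=\hrm_{\srm_r^M}$ for all $r\in(0,R]$ together with the claimed volume equalities, and the identity $\Vol(\brm_R^\omega)=\Vol(\brm_R^M)$ combined with the defining relation \eqref{symball} of the Schwarz symmetrization gives $s(R)=R$.
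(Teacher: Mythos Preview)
Your argument is correct and follows the same strategy as the paper: express $\E_R^{\omega*}$ through the radius function $\rho$ (the paper's $a\circ\widetilde r^{-1}$), obtain the derivative formula $\E_R^{\omega*}{'}(\widetilde r)=-q_\omega(\rho)\,\Vol(\srm_{\widetilde r}^\omega)/\Vol(\srm_{\rho}^M)$, and then compare with $-q_\omega(\widetilde r)$ using $\rho\le\widetilde r$ (from \eqref{ineqVolumes}), the balanced-from-above monotonicity of $q_\omega$, and the isoperimetric inequality \eqref{cor:MeanComp1}. Two small remarks. First, your chaining via $q_\omega(\widetilde r)^2\ge q_\omega(\rho)^2$ is a detour: the paper reaches the same endpoint more directly by inserting the intermediate quantity $\Vol(\brm_\rho^M)/\Vol(\srm_\rho^M)$, namely $q_\omega(\rho)\Vol(\srm_{\widetilde r}^\omega)\le q_\omega(\widetilde r)\Vol(\srm_{\widetilde r}^\omega)=\Vol(\brm_{\widetilde r}^\omega)=\Vol(\brm_\rho^M)\le q_\omega(\rho)\Vol(\srm_\rho^M)\le q_\omega(\widetilde r)\Vol(\srm_\rho^M)$; this makes the equality analysis more transparent. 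Second, in the equality case the paper assumes $\E_R^{\omega*}\equiv E_{s(R)}^\omega$ on the whole interval, whereas you start from equality at a single point $\widetilde r_0$ and use the monotonicity of $\phi$ to propagate it to $[\widetilde r_0,s(R)]$; your route is legitimate and actually yields a slightly sharper statement, and your use of the equality clause of Corollary~\ref{cor:MeanComp} at $r_0=R$ to close the argument is exactly what the paper does as well.
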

	
		

	\begin{proof}

	We are going to analyze first the symmetrization $\E_R^\omega{^*}$. The transplanted function 
	$$\E_R^\omega:B_R^M(o)\longrightarrow\R$$
	 \noindent satisfies that $\E_R^\omega \in C^\infty ( B_R^M(o) \sim \{o\}) \cap C^0(\overline{B}_R^M(o))$, and, moreover, that $\E_R^\omega\vert_{S^M_R(o)}=0$.

	Let us consider the radial function $\psi=E_R^\omega$ defined on the interval $[0,R]$ in equation (\ref{eq:EwR}) of Proposition \ref{prop:MeanTorEqualities}. Let us denote by $T=\max_{[0,R]}\psi$. Thus, as $\psi$ is monotone, (strictly decreasing, with $\psi(0)=T$ and $\psi(R)=0$), we have that $\frac {d}{dr}\psi <0$ on $]0,R]$ and that $\psi:[0,R]\longrightarrow [0,T]$ is bijective. 
	
	Now,  let us define the function $a:[0,T]\longrightarrow[0,R]$ as $a(t):=\psi^{-1}(t)$, satisfying $a(0)=\psi^{-1}(0)=R$ and $a(T)=\psi^{-1}(T)=0$. We know that
	$$a'(t)=\frac{1}{\psi'(a(t))} <0\,\,\forall t \in (0,T)$$
	\noindent so $a(t)$ is strictly decreasing in $(0,T)$.

	Let us denote, for all $x \in B^M_R(o)$, 
	$$\varphi(x)=\E^\omega_R(x):=E^\omega_R(r_o(x))=\psi(r_o(x)).$$
	
	 We have that $\varphi(B^M_R(o))=\psi([0,R])=[0,T]$, so the function $\varphi: B^M_R(o) \rightarrow [0,T]$ satisfies  $\Vert \nabla^M \varphi\Vert=\vert \frac {d}{dr}\psi\vert \Vert \nabla^M r_o\Vert \neq 0$ for all $x \in B^M_R(o) -\{o\}$. Therefore, the set of regular values of $\varphi$ is $R_{\varphi}=(0,T)$.
	

	On the other hand, and given $t\in[0,T]$, let us consider the sets
	
	\begin{equation*}
	\begin{aligned}
		D(t)&=\left\{x\in\brm_R^M\,|\,\varphi(x)\geq t\right\}=\left\{x\in\brm_R^M\,|\,\E^\omega_R(x)\geq t\right\}\\&=\left\{x\in\brm_R^M\,|\,r_o(x)\leq \psi^{-1}(t)\right\}=\brm_{a(t)}^M
		\end{aligned}
	\end{equation*}
	
	and
	
	\begin{equation*}
		\Gamma(t)=\left\{x\in\brm_R^M\,|\,\varphi(x)= t\right\}=\left\{x\in\brm_R^M\,|\,\psi\left(r_o(x)\right)=t\right\}=\srm_{a(t)}^M.
	\end{equation*}

	 We have too that $D(0)=B_{a(0)}^M=B_R^M$ and $D(T)=B_{a(T)}^M=\{o\}$, where $o$ is the center of the geodesic ball $B_R^M$.
	
	We consider the symmetrization in $\MW$ of the sets $\drm(t)=B^M_{a(t)}\subseteq\brm_R^M\subseteq M$, namely, the geodesic balls $\drm(t)^*=\brm_{\widetilde{r}(t)}^\omega(o_\omega)$ in $\MW$ such that
	
	\begin{equation*}
		\Vol\left(\drm(t)\right)=\Vol\left(\brm_{\widetilde{r}(t)}^\omega(o_\omega)\right)\, .
	\end{equation*}
	
	
	
	
	For each $t \in [0,T]$, let us consider the function $\widetilde{r}(t)$, defined in Definition \ref{defR}. Then, in this particular context,  we have that $\widetilde{r}\,:\,[0,T]\longrightarrow[0,s(R)]$ is strictly decreasing and hence, bijective. In fact, note that if $t_1, t_2 \in [0,T]$ such that $t_1 < t_2$, then, as $a(t)$ is strictly decreasing, $a(t_1) > a(t_2)$, so	
	$$\Vol\left(\brm_{\widetilde{r}(t_1)}^\omega(o_\omega)\right)=\Vol\left(\brm_{a(t_1)}^M\right)>\Vol\left(\brm_{a(t_2)}^M\right)=\Vol\left(\brm_{\widetilde{r}(t_2)}^\omega(o_\omega)\right)$$
	\noindent  and hence $\widetilde{r}(t_1)> \widetilde{r}(t_2)$. 
	
	On the other hand, applying Lemma \ref{prop:Rfuncsymm}, we have that  for all $t \in R_\varphi=(0,T)$,
	\begin{equation}\label{eq3.11}
			\widetilde{r}{'}(t)=\dfrac{-1}{\Vol\left(\srm_{\widetilde{r}(t)}^\omega\right)}\int_{\Gamma(t)}\norm{\nabla^M\varphi}^{-1}\,d\sigma_t.
		\end{equation} 
		
		The inverse of $\widetilde{r}$ is the decreasing function
	
	\begin{equation*}
		\phi\,:\,[0,s(R)]\longrightarrow  [0,T];\quad \phi:=\phi(\widetilde{r}),
	\end{equation*}
	
	\noindent such that $\phi'\left(\widetilde{r}(t)\right)=\frac{1}{\widetilde{r}'(t)}$ for all $t\in  [0,T]$, $\phi(0)=T$ and $\phi(s(R))=0$.

	With all this background, we can say now that, in accordance with Definition \ref{def:simfunc} and Theorem \ref{prop:Propietatssymmobjects}, the symmetrization of  $\varphi=\E^\omega_R:\,\brm_R^M\longrightarrow\R$ is a radial function $\varphi^*=\E_R^{\omega^{*}}\,:\,\brm_{s(R)}^\omega\longrightarrow\R$ which satisfies the following equality	
	\begin{equation}\label{eq:Psi}
		\varphi^*(x^*)=\E_R^{\omega^{*}}(x^*)=\E_R^{\omega^{*}}\left(r_{o_\omega}(x^*)\right)
		=t_0=\phi\left(\widetilde{r}(t_0)\right)=\phi\left(\widetilde{r}\right).
	\end{equation}
	
	To see equation (\ref{eq:Psi}), we argue as follows: given $x^*\in B^M_R(o)^*=\brm_{s(R)}^\omega(o_\omega)=\cup_{t\in[0,T]}\srm_{\widetilde{r}(t)}^\omega(o_\omega)$, (concerning the second equality, recall that $\widetilde{r}: [0,T] \rightarrow [0,s(R)]$ is bijective), there exists some biggest value $t_0$ such that $r_{o_\omega}(x^*)=\widetilde{r}(t_0)$ and, hence, $x^*\in\brm_{\widetilde{r}(t_0)}^\omega =\drm(t_0)^*$. We then have that 
	\begin{equation}\label{eq:Psi2}
	\varphi^*(x^*)=\varphi^*(r_{o_{\omega}}(x^*))=\sup\{t \geq 0/ x^* \in B^w_{\widetilde{r}(t)}(o)\}=t_0=\phi\left(\widetilde{r}(t_0)\right)
	\end{equation}

	\noindent and hence, for all  $t\in(0,T)$, $\varphi^*\equiv \varphi^*(\widetilde{r}(t))$ and we have, applying equation \eqref{eq3.11}:
	
	\begin{equation}\label{eq:1}
	\begin{aligned}
		\frac{d}{d\widetilde{r}}\vert_{\widetilde{r}=\widetilde{r}(t)} \varphi^*(\widetilde{r})=\varphi^{*}{'}(\widetilde{r}(t))&=\E_R^{\omega^{*}}{'}(\widetilde{r}(t))=\phi{'}(\widetilde{r}(t))\\&=\dfrac{1}{\widetilde{r}{'}(t)}=-\frac{\Vol\left(\srm_{\widetilde{r}(t)}^\omega\right)}{ \int_{\Gamma(t)}\norm{\nabla^M\varphi}^{-1}\,d\sigma_t }.
	\end{aligned}
	\end{equation}

	But, as $\Vert \nabla^M \varphi(x)\Vert=\vert \psi'(r_o(x))\vert \neq 0$ for all $x \in B^M_R(o) -\{o\}$ and  $\Gamma(t)=\srm_{a(t)}^M$ for all $t\in R_\varphi = (0,T)$,  we conclude that
	
	\begin{equation}\label{eq4.2}
			\int_{\Gamma(t)}\norm{\nabla^M\varphi}^{-1}\,d\sigma_t  =  \dfrac{1}{\vert\psi{'}(a(t))\vert}\Vol\left(\srm_{a(t)}^M\right)
	\end{equation}
	
	\noindent and hence, equation (\ref{eq:1}) becomes, using equation (\ref{eq4.2}), and the fact that $\psi=E^\omega_R$, in the following expression, for all $t \in [0,T]$:
	
	\begin{equation}\label{eq:psisymmprimavolums}
		\begin{split}
			\varphi^*{'}(\widetilde{r}(t)) & =-\modul{\psi{'}(a(t))}\dfrac{\Vol\left(\srm_{\widetilde{r}(t)}^\omega\right)}{\Vol\left(\srm_{a(t)}^M\right)}\\
			& = -\dfrac{\Vol\left(\brm_{a(t)}^\omega\right)}{\Vol\left(\srm_{a(t)}^\omega\right)}\,\dfrac{\Vol\left(\srm_{\widetilde{r}(t)}^\omega\right)}{\Vol\left(\srm_{a(t)}^M\right)}.
		\end{split}
	\end{equation}
	
	On the other hand, let us assume that $\hrm_{\srm_r^\omega}\leq\,\hrm_{\srm_r^M}$, for all $r \in (0,R]$. Then by Corollary \ref{cor:MeanComp} we know that  $\Vol\left(\brm_r^\omega\right)\,\leq\,\Vol\left(\brm_r^M\right)$ for all $r \in [0,R]$. Therefore,  
	
	\begin{equation}\label{eq4.14}
		\Vol\left(\brm_{\widetilde{r}(t)}^\omega\right)=\Vol\left(\brm_{a(t)}^M\right)\,\geq\,\Vol\left(\brm_{a(t)}^\omega\right), \quad\text{for all }t\in[0,T].
	\end{equation}
	
	Then, since $\Vol\left(\brm_r^\omega\right)$ is an increasing function, because $\frac{d}{dr}\Vol\left(\brm_r^\omega\right)=\Vol\left(\srm_r^\omega\right)\geq 0$, we have that 
	
	\begin{equation}\label{eq4.151}
		\widetilde{r}(t)\,\geq\,a(t),\quad\text{for all }t\in[0,T].
	\end{equation}
	
	\noindent so, since $\MW$ is balanced from above,  $q_\omega{'}(r)\geq 0$, we obtain:
	
	\begin{equation}\label{eq4.15}
		\dfrac{\Vol\left(\brm_{\widetilde{r}(t)}^\omega\right)}{\Vol\left(\srm_{\widetilde{r}(t)}^\omega\right)}\,\geq\,\dfrac{\Vol\left(\brm_{a(t)}^\omega\right)}{\Vol\left(\srm_{a(t)}^\omega\right)}, \quad\text{for all }t\in[0,T].
	\end{equation}
	
	Therefore, using equation \eqref{eq:psisymmprimavolums} and the fact that $\Vol\left(\brm_{\widetilde{r}(t)}^\omega \right)=\Vol\left(\brm_{a(t)}^M\right)$, we have
	
	\begin{equation}\label{eq.4.16}
		\begin{split}
			\E_R^{\omega *}{'}(\widetilde{r}(t))=\varphi^*{'}(\widetilde{r}(t)) \geq\,& -\dfrac{\Vol\left(\brm_{a(t)}^M\right)}{\Vol\left(\srm_{a(t)}^M\right)},\quad\text{for all }t\in[0,T].
		\end{split}
	\end{equation}
	
	Now, we apply  Proposition \ref{prop:MeanTorEqualities}, the isoperimetric inequality (\ref{cor:MeanComp1}) of Corollary \ref{cor:MeanComp}, the fact that $\widetilde{r}(t)\geq\,a(t)$, and that $q_\omega{'}\geq 0$, to obtain finally
	
	\begin{equation}\label{eq4.17}
			\E_R^{\omega *}{'}(\widetilde{r}(t)) \geq\,-\dfrac{\Vol\left(\brm_{a(t)}^M\right)}{\Vol\left(\srm_{a(t)}^M\right)}
			 \geq\,-\dfrac{\Vol\left(\brm_{\widetilde{r}(t)}^\omega\right)}{\Vol\left(\srm_{\widetilde{r}(t)}^\omega\right)}=E_{s(R)}^\omega{'}(\widetilde{r}(t))\,\,\forall t \in (0,T).
	\end{equation}
	
	Now, as  $\E_R^{\omega *}{'}(\widetilde{r}) \geq\,E_{s(R)}^\omega{'}(\widetilde{r})\,\,\forall \widetilde{r} \in (0,s(R))$, we have, integrating along $[0,s(R)]$, and taking into account that $\E_R^{\omega *}(s(R))=E_{s(R)}^\omega(s(R))=0$,
	\begin{equation}
	\begin{aligned}
	-\E_R^{\omega *}(\widetilde{r})&=\int_{\widetilde{r}}^{s(R)} \E_R^{\omega *}{'}(u)du \geq \\&
	\int_{\widetilde{r}}^{s(R)} E_{s(R)}^\omega{'}(u)du=-E_{s(R)}^\omega(\widetilde{r})\,\,\,\forall \widetilde{r} \in [0,s(R)]
	\end{aligned}
	\end{equation}
	\noindent so 
	$$\E_R^{\omega *}(\widetilde{r}) \leq\,E_{s(R)}^\omega(\widetilde{r})\,\,\,\forall \widetilde{r} \in [0,s(R)].$$
	
	If we assume that $\hrm_{\srm_r^\omega}\geq\,\hrm_{\srm_r^M}$, for all $r \in [0,R]$, we use the same argument, changing all the inequalities, to obtain
	\begin{equation}
	\begin{aligned}
	\E_R^{\omega *}{'}(\widetilde{r}(t)) &\leq\,E_{s(R)}^\omega{'}(\widetilde{r}(t))\,\,\forall t \in (0,T)\,\,\text{and hence}\\
	\E_R^{\omega *}(\widetilde{r}) &\geq\,E_{s(R)}^\omega(\widetilde{r})\,\,\,\forall \widetilde{r} \in [0,s(R)].
	\end{aligned}
	\end{equation}
	
	We are going to study the case of equality, when we assume the hypothesis $\hrm_{\srm_r^\omega}\leq\,\hrm_{\srm_r^M}$, for all $r \in [0,R]$, (the discussion of equality if we assume $\hrm_{\srm_r^\omega}\geq\,\hrm_{\srm_r^M}$, for all $r \in [0,R]$ is the same, mutatis mutandi).
	
	Equality $\E_R^{\omega *}(\widetilde{r})=E_{s(R)}^\omega(\widetilde{r}) \,\,\forall r \in ]0,s(R)]$ implies equality $\E_R^{\omega *}{'}(\widetilde{r}) =\,E_{s(R)}^\omega{'}(\widetilde{r})\,\,\forall \widetilde{r} \in (0,s(R))$, which in its turn implies that inequalities in (\ref{eq4.17}) and hence, in (\ref{eq.4.16}) and (\ref{eq4.15}) become equalities for all $t \in [0,T]$. In particular, from equality in (\ref{eq4.15}) and inequality (\ref{eq4.14}), we deduce that
	\begin{equation}\label{eq4.18}
		\dfrac{\Vol\left(\srm_{a(t)}^\omega\right)}{\Vol\left(\srm_{\widetilde{r}(t)}^\omega\right)}\,\leq\,1, \quad\text{for all }t\in[0,T].
	\end{equation}
	On the other hand, using again equality in inequality (\ref{eq4.15}) and having into account that, as we assume that $\hrm_{\srm_r^\omega}\leq\,\hrm_{\srm_r^M}$, for all $r \in [0,R]$, then we have  isoperimetric inequality (\ref{cor:MeanComp1}), we obtain:
	
	\begin{equation}\label{eq4.19}
		\dfrac{\Vol\left(\brm_{\widetilde{r}(t)}^\omega\right)}{\Vol\left(\srm_{\widetilde{r}(t)}^\omega\right)}\,=\,\dfrac{\Vol\left(\brm_{a(t)}^\omega\right)}{\Vol\left(\srm_{a(t)}^\omega\right)}\,\geq\,\dfrac{\Vol\left(\brm_{a(t)}^M\right)}{\Vol\left(\srm_{a(t)}^M\right)} , \quad\text{for all }t\in[0,T],
	\end{equation}	
	\noindent and hence, as $\Vol\left(\brm_{\widetilde{r}(t)}^\omega\right)=\Vol\left(\brm_{a(t)}^M\right)$ and using (\ref{eq4.18}):
	\begin{equation}\label{eq4.20}
	\Vol\left(\srm_{a(t)}^\omega\right) \leq \Vol\left(\srm_{\widetilde{r}(t)}^\omega\right) \leq \Vol\left(\srm_{a(t)}^M\right).
	\end{equation}

Now, differentiating the equality 
\begin{equation}\label{eq4.21}
		\Vol\left(\brm_{\widetilde{r}(t)}^\omega\right)=\Vol\left(\brm_{a(t)}^M\right),\quad\text{for all }t\in[0,T],
	\end{equation}
	\noindent we obtain
	\begin{equation}\label{eq4.22}
		\Vol\left(\srm_{\widetilde{r}(t)}^\omega\right)\widetilde{r'}(t)=\Vol\left(\srm_{a(t)}^M\right)a'(t),\quad\text{for all }t\in(0,T),
	\end{equation}
	\noindent and hence, using inequality (\ref{eq4.20}),
	
	\begin{equation}\label{eq4.23}
		\dfrac{\widetilde{r'}(t)}{a'(t)}\,=\,\dfrac{\Vol\left(\srm_{a(t)}^M\right)}{\Vol\left(\srm_{\widetilde{r}(t)}^\omega\right)}\,\geq\,1, \quad\text{for all }t\in(0,T),
	\end{equation}	
	\noindent so $\widetilde{r'}(t) \geq a'(t)\,\,\forall t \in (0,T)$, and therefore, as $\widetilde{r}(T)=a(T)=0$, we finally obtain, integrating along $[0,T]$, that $\widetilde{r}(t)\leq a(t)\,\,\forall t \in [0,T]$. Hence, as we know, (see inequality (\ref{eq4.151})), that $\widetilde{r}(t)\geq a(t)\,\,\forall t \in [0,T]$, we obtain
	$$\widetilde{r}(t)= a(t)\,\,\forall t \in [0,T].$$
	
	Therefore, $s(R)=\widetilde{r}(0)= a(0)=R$\, and, moreover, $\Vol\left(\brm_{\widetilde{r}(t)}^\omega \right)=\Vol\left(\brm_{\widetilde{r}(t)}^M\right)\,\,\text{for all }t\in[0,T]$, so
	$$\Vol\left(\brm_{r}^\omega \right)=\Vol\left(\brm_{r}^M\right)\,\,\forall r \in [0,R]$$
	\noindent and hence
	$$\Vol\left(\srm_{r}^\omega \right)=\Vol\left(\srm_{r}^M\right)\,\,\forall r \in [0,R].$$
	
	Moreover, we apply the equality assertion in Corollary \ref{cor:MeanComp} to conclude that $\hrm_{\srm_r^\omega}=\,\hrm_{\srm_r^M},\;\,\text{for all }0\leq r \leq R$

\end{proof}

As a consequence of the Proposition \ref{prop:ineqpsiE} we have the following result, where it is proved that, under our hypotheses, the torsional rigidity of the geodesic balls determines its first Dirichlet eigenvalue:

\begin{theorem}\label{teo:2.1}
	Let $(M^n,g)$ be a complete Riemannian manifold and let $(\MW,g_\omega)$ be a rotationally symmetric model space with center $o_\omega \in \MW$, balanced from above. Let $o \in M$ be a point in $M$ and let us suppose that  $inj(o) \leq inj(o_\omega)$. Let us consider a metric ball $B^M_R(o)$, with $R < inj(o) \leq inj(o_w)$. Let us suppose moreover that the mean curvatures of the geodesic spheres in $M$ and $M_{\omega}$ satisfies

	\begin{equation}\label{eq:meancurvatureconditions9}
		\hrm_{\srm_r^\omega}\leq\,(\geq)\, \hrm_{\srm_r^M}\quad\text{for all}\quad 0 < r \leq R.
	\end{equation}
	
	Then
	
	\begin{equation}\label{torsrigcomp}
		\CA_1\left(\brm_{s(R)}^\omega\right)\geq\,(\leq)\,\CA_1\left(\brm_R^M\right)
	\end{equation}
	
	\noindent where $\brm_{s(R)}^\omega$ is the Schwarz symmetrization of $\brm_R^M$ in the model space $(\MW,g_\omega)$.

	Equality in any of  inequalities (\ref{torsrigcomp})  implies the equality among the radius $s(R)=R$ and that 
	 $$\hrm_{\srm_R^\omega(o_\omega)}=\,\hrm_{\srm_R^M(o)}\,\,\,\text{for all }0<r\leq R$$
	 \noindent and hence, we have the equalities
\begin{enumerate}
	\item Equality $\bu_{k,R}^{\omega}=u_{k,R}$ on $B^M_R(o)$ for all $k\geq 1$, and hence, equality $\bu_{k,r}^{\omega}=u_{k,r}$ on $B^M_r(o)$ for all $k\geq 1$ and for all $0 < r \leq R$.
	\item Equalities $\Vol\left(\brm_r^\omega(o_\omega)\right)=\Vol\left(\brm_r^M(o)\right)$ and $\Vol\left(\srm_r^\omega(o_\omega)\right)=\Vol\left(\srm_r^M(o)\right)\,\,\,\text{for all }\, 0<r \leq R$.
	\item Equalities $\CA_k\left(\brm_r^\omega(o_\omega)\right)=\CA_k\left(\brm_r^M(o)\right)$,  for all $k\geq 1$ and for all $0<r \leq R$.
	\item Equality $\lambda_1(B^w_r(o_\omega))=\lambda_1(B^M_r(o))$ for all $0<r \leq R$.
	
	\noindent Namely, the Torsional Rigidity determines the Poisson hierarchy, the volume, the $L^1$-moment spectrum and the first Dirichlet eigenvalue of the ball $B^M_r(o)$ for all $0<r \leq R$.

	\end{enumerate}
	
\end{theorem}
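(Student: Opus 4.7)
The plan is to establish the inequality by chaining three comparisons: the pointwise mean exit time comparison on $B^M_R$, the equimeasurability of the Schwarz symmetrization, and the symmetrized exit time comparison on the model ball. Concretely, I start from $\CA_1(B^M_R)=\int_{B^M_R}E^M_R\,d\sigma$. In the case $\hrm_{\srm_r^\omega}\leq\hrm_{\srm_r^M}$, Theorem \ref{teo:MeanComp} gives $E^M_R\leq\E_R^\omega$ pointwise on $B^M_R$, so $\CA_1(B^M_R)\leq\int_{B^M_R}\E_R^\omega\,d\sigma$. Next, Theorem \ref{cor:eqschwarz} (the equimeasurability identity for the transplanted mean exit time) yields $\int_{B^M_R}\E_R^\omega\,d\sigma=\int_{B^\omega_{s(R)}}\E_R^{\omega*}\,d\widetilde{\sigma}$. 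Finally, Proposition \ref{prop:ineqpsiE} gives $\E_R^{\omega*}\leq E^\omega_{s(R)}$ on $B^\omega_{s(R)}$, hence
\begin{equation*}
\CA_1(B^M_R)\leq\int_{B^\omega_{s(R)}}\E_R^{\omega*}\,d\widetilde{\sigma}\leq\int_{B^\omega_{s(R)}}E^\omega_{s(R)}\,d\widetilde{\sigma}=\CA_1\bigl(B^\omega_{s(R)}\bigr).
\end{equation*}
The reverse hypothesis is handled by reversing every inequality in the same chain.

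For the equality discussion (assuming the first curvature hypothesis; the other case is symmetric), suppose $\CA_1(B^\omega_{s(R)})=\CA_1(B^M_R)$. Then each of the three intermediate inequalities above must be an equality. The equality in Proposition \ref{prop:ineqpsiE}, namely $\E_R^{\omega*}= E^\omega_{s(R)}$ on $B^\omega_{s(R)}$, is the strongest one: its equality clause forces $s(R)=R$ together with $\hrm_{\srm_r^\omega}=\hrm_{\srm_r^M}$ for all $0<r\leq R$, and the accompanying volume identities $\Vol(\brm_r^\omega)=\Vol(\brm_r^M)$, $\Vol(\srm_r^\omega)=\Vol(\srm_r^M)$ for all such $r$. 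This already proves assertion (2) and the first half of the statement concerning the identity of mean curvatures.

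Once pointwise equality of the mean curvatures of geodesic spheres is known, the remaining equalities follow from the earlier rigidity results. Proposition \ref{prop3.2} upgrades it to $E^M_r=\E_r^\omega$ on $B^M_r(o)$ for every $0<r\leq R$. Then the rigidity inside the Poisson hierarchy contained in assertions (3) and (4) of Theorem \ref{teo:TowMomComp} propagates the equality to every $k\geq 1$, giving $\bu_{k,r}^\omega=u_{k,r}$ on $B^M_r(o)$ and thus assertion (1). Integrating these identities with the already-known equality of boundary sphere volumes yields $\CA_k(B^\omega_r)=\CA_k(B^M_r)$ for all $k$ and all $0<r\leq R$, which is assertion (3); alternatively this can be read off directly from Corollary \ref{isoptors}. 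Finally, the first Dirichlet eigenvalue identity of assertion (4) follows from the moment-spectrum formula
\begin{equation*}
\lambda_1(B^M_r)=\lim_{k\to\infty}\frac{k\,\CA_{k-1}(B^M_r)}{\CA_k(B^M_r)}=\lim_{k\to\infty}\frac{k\,\CA_{k-1}(B^\omega_r)}{\CA_k(B^\omega_r)}=\lambda_1(B^\omega_r).
\end{equation*}

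The main technical obstacle is verifying that the equality case of Proposition \ref{prop:ineqpsiE} really does force $s(R)=R$ in addition to the mean-curvature identity; this in turn hinges on the balanced-from-above hypothesis, used to ensure monotonicity of $q_\omega$ that drives the chain of inequalities connecting $\widetilde{r}(t)$ and $a(t)$. Once that is granted, the rest of the argument is an essentially mechanical bootstrap through the already-established results of Section \ref{sec:MeanComp} and Section \ref{sec:TorRidCom1}.
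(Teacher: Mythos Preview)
Your proof is correct and follows essentially the same approach as the paper: both chain Theorem~\ref{teo:MeanComp}, Theorem~\ref{cor:eqschwarz}, and Proposition~\ref{prop:ineqpsiE} to obtain the inequality, and both analyze the equality case by forcing pointwise equality in each link and then invoking the rigidity clauses of Proposition~\ref{prop:ineqpsiE}, Theorem~\ref{teo:TowMomComp}, and Corollary~\ref{isoptors}. The only cosmetic difference is that the paper also explicitly uses the first integral equality $\int_{B^M_R}E^M_R=\int_{B^M_R}\E_R^\omega$ together with Theorem~\ref{teo:MeanComp} to deduce $E^M_R=\E_R^\omega$, whereas you obtain this via Proposition~\ref{prop3.2} after first extracting the mean-curvature equality from Proposition~\ref{prop:ineqpsiE}; both routes are valid.
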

 
\begin{proof}

Let us consider a symmetric model space rearrangement of the metric ball $\brm_R^M$ as it has been described in Definition \ref{def:Symm} and Definition \ref{def:simfunc}, namely, a symmetrization of $\brm_R^M$ which is a geodesic $s(R)$-ball in the model space $\MW$ such that $\Vol\left(\brm_R^M\right)=\Vol\left(\brm_{s(R)}^\omega\right)$, together the symmetrization $\E_R^\omega{^*}\,:\,\brm_{s(R)}^\omega\longrightarrow\R$ of the transplanted mean exit time function $\E_R^\omega\,:\,\brm_R^M\longrightarrow\R$.

	
	Applying Theorems \ref{teo:MeanComp} and \ref{cor:eqschwarz}  and Proposition \ref{prop:ineqpsiE}, we have that
	
	
	
	\begin{equation}\label{eq:4}
	\begin{aligned}
		\CA_1\left(\brm_R^M\right)&=\int_{\brm_R^M}E_R^M\,d\sigma\leq\,(\geq)\,\int_{\brm_R^M}\E_R^\omega\,d\sigma\\&=\int_{\brm_{s(R)}^\omega}\E_R^\omega{^*}\,d\widetilde{\sigma} \leq\,(\geq)\,\int_{\brm_{s(R)}^\omega}E_{s(R)}^\omega\,d\widetilde{\sigma}=\CA_1\left(\brm_{s(R)}^\omega\right)
		\end{aligned}
	\end{equation}
	
	\noindent and the Theorem is proved.

	We are going to study the case of equality, when we assume the hypothesis $\hrm_{\srm_r^\omega}\leq\,\hrm_{\srm_r^M}$, for all $r \in [0,R]$, (the discussion of equality if we assume $\hrm_{\srm_r^\omega}\geq\,\hrm_{\srm_r^M}$, for all $r \in [0,R]$ is the same, mutatis mutandi).
	
	Equality in (\ref{eq:4}) implies that all the inequalities contained in this expression become equalities. In particular, we have that $\int_{\brm_R^M}E_R^M\,d\sigma=\int_{\brm_R^M}\E_R^\omega\,d\sigma$ and that $\int_{\brm_{s(R)}^\omega}\E_R^\omega{^*}=\int_{\brm_{s(R)}^\omega}E_{s(R)}^\omega\,d\widetilde{\sigma}$. 
	
	From this second equality and inequality (\ref{eq:ineqpsiE2}) in Proposition \ref{prop:ineqpsiE},  we have that $\E_R^\omega{^*}=E_{s(R)}^\omega$ on $[0,s(R)]$. Applying again Proposition \ref{prop:ineqpsiE}, we deduce that $s(R)=R$, and that $\hrm_{\srm_r^\omega}=\,\hrm_{\srm_r^M}\,\,\,\text{for all }0<r \leq R$.  
	
	On the other hand, equality $\int_{\brm_R^M}E_R^M\,d\sigma=\int_{\brm_R^M}\E_R^\omega\,d\sigma$ implies, using Theorem \ref{teo:MeanComp}, that $E_R^M=\E_R^\omega$ on $\brm_R^M$. Hence we conclude equality $\bu_{k,R}^{\omega}=u_{k,R}$ on $B^M_R(o)$ for all $k\geq 1$ using assertion (3) in Theorem \ref{teo:TowMomComp} and that $\bu_{k,r}^{\omega}=u_{k,r}$ on $B^M_r(o)$ for all $k\geq 1$ and for all $r \in [0,R]$ using assertion (4) in Theorem \ref{teo:TowMomComp}.

	Moreover, equality $E_R^M=\E_R^\omega$ on $\brm_R^M$ implies, using equality conclusions in Corollary \ref{cor:MeanComp2}, that, for all  $r \in ]0,R]$,
	
	\begin{equation}
	\begin{aligned}		
\dfrac{\Vol\left(\brm_r^\omega(o_\omega)\right)}{\Vol\left(\srm_r^\omega(o_\omega)\right)\,}&=\,\dfrac{\Vol\left(\brm_r^M(o)\right)}{\Vol\left(\srm_r^M(o)\right)},\\
\Vol\left(\brm_r^\omega\right)\, &=\,\Vol\left(\brm_r^M\right),\\
	\Vol(S^\omega_r) &= \Vol(S^M_r).
	\end{aligned}
	\end{equation}
	
	Hence, as we are assuming that $\CA_1\left(\brm_R^M\right)=\CA_1\left(\brm_{s(R)}^\omega\right)$ and we have deduced $s(R)=R$, then we obtain the equality $$\dfrac{\CA_1\left(\brm_R^\omega\right)}{\Vol\left(\srm_R^\omega\right)}=\dfrac{\CA_1\left(\brm_R^M\right)}{\Vol\left(\srm_R^M\right)}$$ and hence, by Corollary \ref{isoptors}, $\CA_k\left(\brm_r^M\right)=\CA_k\left(\brm_{r}^\omega\right)$ for all $k \geq 1$ and for all $0 <r \leq R$.
	
	Finally, as equality $\CA_1\left(\brm_R^M\right)=\CA_1\left(\brm_{s(R)}^\omega\right)$ implies equalities  $\CA_k\left(\brm_r^\omega\right)=\CA_k\left(\brm_r^M\right)$,  for all $k\geq 1$ and for all $r \in ]0,R]$, we have that, given $B^M_r \subseteq M$ in a Riemannian manifold $(M,g)$ with $r \in ]0,R]$, (see \cite{HuMP3} and \cite{BGJ}):
\begin{equation}\label{gim}
\begin{aligned}
\lambda_1(B^M_r)&=\lim_{k \to \infty}\dfrac{k\CA_{k-1}\left(\brm_r^M\right)}{\CA_{k}\left(\brm_r^M\right)}\\&=\lim_{k \to \infty}\dfrac{k\CA_{k-1}\left(\brm_r^\omega\right)}{\CA_{k}\left(\brm_r^\omega\right)}=\lambda_1(B^\omega_r).
\end{aligned}
\end{equation}

\end{proof}

\begin{corollary}\label{cor:2.1}
	Let $(M^n,g)$ be a complete Riemannian manifold and let $(\MW,g_\omega)$ be a rotationally symmetric model space with center $o_\omega \in \MW$, balanced from above. Let $o \in M$ be a point in $M$ and let us suppose that  $inj(o) \leq inj(o_\omega)$. Let us consider a metric ball $B^M_R(o)$, with $R < inj(o) \leq inj(o_w)$. Let us suppose moreover that the mean curvatures of the geodesic spheres in $M$ and $M_{\omega}$ satisfies

	\begin{equation}\label{eq:meancurvatureconditions10}
		\hrm_{\srm_r^\omega}\leq\,(\geq)\, \hrm_{\srm_r^M}\quad\text{for all}\quad 0 < r \leq R.
	\end{equation}
	Then
	
	\begin{equation}\label{torsrigcomp2}
		\CA_1\left(\brm_R^M\right)\,\leq E_{s(R)}^\omega(0) \Vol(\brm_{R}^M).
	\end{equation}
	\end{corollary}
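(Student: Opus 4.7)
The plan is to chain together Theorem \ref{teo:2.1} with the trivial bound given by the maximum of the mean exit time function in the model symmetrization, then use the defining property of Schwarz symmetrization to translate volumes back to the original ball.

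First, I would invoke Theorem \ref{teo:2.1} under the hypothesis $\hrm_{\srm_r^\omega}\leq \hrm_{\srm_r^M}$ to obtain the torsional rigidity comparison
\begin{equation*}
\CA_1(\brm_R^M) \leq \CA_1(\brm_{s(R)}^\omega) = \int_{\brm_{s(R)}^\omega} E_{s(R)}^\omega\, d\widetilde{\sigma}.
\end{equation*}
Next, by Proposition \ref{prop:MeanTorEqualities}, $E_{s(R)}^\omega$ is a non-increasing radial function that attains its maximum at the center $o_\omega$, so $E_{s(R)}^\omega(x) \leq E_{s(R)}^\omega(0)$ for every $x \in \brm_{s(R)}^\omega$. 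Hence
\begin{equation*}
\int_{\brm_{s(R)}^\omega} E_{s(R)}^\omega\, d\widetilde{\sigma} \leq E_{s(R)}^\omega(0)\,\Vol(\brm_{s(R)}^\omega).
\end{equation*}

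Finally, by the very definition of the Schwarz symmetrization (equation \eqref{symball}), $\Vol(\brm_{s(R)}^\omega) = \Vol(\brm_R^M)$, so combining the two inequalities gives the desired bound
\begin{equation*}
\CA_1(\brm_R^M) \leq E_{s(R)}^\omega(0)\,\Vol(\brm_R^M).
\end{equation*}
There is no real obstacle here: the argument is a one-line consequence of Theorem \ref{teo:2.1} plus the pointwise bound $E_{s(R)}^\omega \leq E_{s(R)}^\omega(0)$, which follows from the explicit formula $E_{s(R)}^\omega(r) = \int_r^{s(R)} q_\omega(t)\,dt$ given in Proposition \ref{prop:MeanTorEqualities}. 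The only (minor) subtlety is to keep in mind that the statement only concerns the ``$\leq$'' case of the mean curvature hypothesis, since the corresponding lower bound on $\CA_1(\brm_R^M)$ (in the opposite curvature regime) would require a pointwise \emph{lower} estimate on $E_{s(R)}^\omega$, which is not generally available from a single value of the function.
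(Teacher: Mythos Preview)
Your proof is correct and follows essentially the same route as the paper: the paper uses the chain of inequalities \eqref{eq:4} from the proof of Theorem \ref{teo:2.1} (which you invoke as a black box), then bounds $E_{s(R)}^\omega$ by its maximum value $E_{s(R)}^\omega(0)$ and replaces $\Vol(\brm_{s(R)}^\omega)$ by $\Vol(\brm_R^M)$ via the definition of Schwarz symmetrization. Your observation that only the ``$\leq$'' case of the hypothesis is relevant here is also accurate.
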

	\begin{proof}
	Assuming that $\hrm_{\srm_r^\omega}\leq\ \hrm_{\srm_r^M}\,\,\text{for all}\,\, 0 < r \leq R$, we use equation (\ref{eq:4}) to obtain, having into account that $E_{s(R)}^\omega(r) \leq E_{s(R)}^\omega(0)\,\forall r \in ]0,s(R)]$, 
	
		\begin{equation}
	\begin{aligned}
		\CA_1\left(\brm_R^M\right)&=\int_{\brm_R^M}E_R^M\,d\sigma\, \leq \,\int_{\brm_R^M}\E_R^\omega\,d\sigma\\&=\int_{\brm_{s(R)}^\omega}\E_R^\omega{^*}d\widetilde{\sigma}\leq\,\int_{\brm_{s(R)}^\omega}E_{s(R)}^\omega\,d\widetilde{\sigma}\leq E_{s(R)}^\omega(0) \Vol(\brm_{R}^M).
		\end{aligned}
	\end{equation}
	\end{proof}
	
	\begin{remark}
	As $(\MW,g_\omega)$ is balanced from above, then $\dfrac{d}{dr}\left(q_\omega(r)\right)   \geq 0$, so $q_\omega(r)$ is non-decreasing with $r$. Then, as $E_{s(R)}^\omega(r(x))=\psi(r(x))=\int_{r(x)}^{s(R)} q_\omega(t)\,dt$, we have that 
	$$E_{s(R)}^\omega(0)=\int_{0}^{s(R)} q_\omega(t)\,dt \leq s(R) q_\omega(s(R))=s(R) \dfrac{\Vol\left(\brm_{s(R)}^\omega(o_\omega)\right)}{\Vol\left(\srm_{s(R)}^\omega(o_\omega)\right)}$$
	\noindent so 
	$$\CA_1\left(\brm_R^M\right)\,\leq \E_{s(R)}^\omega(0) \Vol(\brm_{R}^M) \leq s(R) \dfrac{\Vol\left(\brm_{s(R)}^\omega(o_\omega)\right)}{\Vol\left(\srm_{s(R)}^\omega(o_\omega)\right)} \Vol(\brm_{R}^M).$$
	
	\end{remark}

\section{ Cheng's first Dirichlet eigenvalue comparison and the determination of the moment spectrum of a geodesic ball }\label{sec:TorRidCom2}\


\medskip 
Finally, as a corollary of the previous results, we have in Theorem \ref{th_const_below2} a Cheng's first Dirichlet eigenvalue comparison, (see \cite{BM}). On the other hand, in Corollary \ref{th_const_below3}, we have been able to show that, under our hypotheses,  the first Dirichlet eigenvalue of geodesic balls determines its exit time moment spectrum and its Poisson hierarchy.

\begin{theorem}\label{th_const_below2}
Let $(M^n,g)$ be a complete Riemannian manifold and let $(\MW,g_\omega)$ be a rotationally symmetric model space with center $o_\omega \in \MW$. Let $o \in M$ be a point in $M$ and let us suppose that  $inj(o) \leq inj(o_\omega)$. Let us consider a metric ball $B^M_R(o)$, with $R < inj(o) \leq inj(o_w)$. Let us suppose moreover that the mean curvatures of the geodesic spheres in $M$ and $M_{\omega}$ satisfies

	\begin{equation}\label{eq:meancurvatureconditions12}
		\hrm_{\srm_r^\omega}\leq\,(\geq)\, \hrm_{\srm_r^M}\quad\text{for all}\quad 0 < r \leq R.
	\end{equation}
	
	Then we have the inequalities
 \begin{equation}\label{ineqleq_submanifold2}
  \lambda_1 (B^\omega_R) \leq\,(\geq)\,\lambda_1(B^M_R)\end{equation}
\noindent where $B^\omega_R$ is the geodesic ball in $M^m_\omega$.
 
\noindent  Equality in any of these inequalities implies that 
$$\hrm_{\srm_r^\omega}=\,\hrm_{\srm_r^M}\,\,\,\text{for all }0<r\leq R$$
\noindent and hence, we have the equalities

\begin{enumerate}
	\item Equality $\bu_{k,R}^{\omega}=u_{k,R}$ on $B^M_R(o)$ for all $k\geq 1$, and hence, equality $\bu_{k,r}^{\omega}=u_{k,r}$ on $B^M_r(o)$ for all $k\geq 1$ and for all $0 < r \leq R$.
	\item Equalities $\Vol\left(\brm_r^\omega\right)=\Vol\left(\brm_r^M\right)$ and $\Vol\left(\srm_r^\omega\right)=\Vol\left(\srm_r^M\right)\,\,\,\text{for all }\, 0<r \leq R$.
	\item Equalities $\CA_k\left(\brm_r^\omega\right)=\CA_k\left(\brm_r^M\right)$,  for all $k\geq 1$ and for all $0<r \leq R$.
	
	\noindent Namely, the first Dirichlet eigenvalue determines  the Poisson hierarchy, the volume, and  the $L^1$-moment spectrum of the ball $B^M_r(o)$ for all $0<r \leq R$.

	\end{enumerate}

  \end{theorem}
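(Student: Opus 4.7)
The plan is to establish the inequality via the averaged $L^1$-moment comparison combined with the BGJ limit formula for $\lambda_1$, and to derive the equality consequences by a Barta-type transplantation argument followed by the rigidity machinery from earlier sections.

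Under the standing hypothesis \eqref{eq:meancurvatureconditions12}, Corollary \ref{isoptors} yields, for every $k\geq 1$,
\begin{equation*}
\dfrac{\CA_k(\brm_R^\omega)}{\Vol(\srm_R^\omega)} \geq\,(\leq)\,\dfrac{\CA_k(\brm_R^M)}{\Vol(\srm_R^M)},
\end{equation*}
while Corollary \ref{cor:MeanComp} supplies the volume bound $\Vol(\srm_R^\omega)\leq\,(\geq)\,\Vol(\srm_R^M)$, so that $\CA_k(\brm_R^\omega)\geq\,(\leq)\, c\,\CA_k(\brm_R^M)$ holds with a positive constant $c$ independent of $k$. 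Applying the formula $\lambda_1(\Omega)=\lim_{k}k\CA_{k-1}(\Omega)/\CA_k(\Omega)$ of \cite{BGJ},\cite{HuMP3} to both $\brm_R^M$ and $\brm_R^\omega$, and invoking the sharp asymptotic $\CA_k(D)\sim c(D)\,k!\,\lambda_1(D)^{-k}$ that underlies the proof of Theorem~1.1 of \cite{McMe}, the $k$-independent multiplicative comparison forces the factor $(\lambda_1(\brm_R^M)/\lambda_1(\brm_R^\omega))^{k}$ to stay bounded below (above) for every large $k$, which is only compatible with $\lambda_1(\brm_R^\omega)\leq\,(\geq)\,\lambda_1(\brm_R^M)$.

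For the equality case, following \cite{BM} and Theorem \ref{teoPacelli}, I would transplant the radial first Dirichlet eigenfunction $\psi^\omega$ of $\brm_R^\omega$ by $\bar\psi^\omega(x):=\psi^\omega(r_o(x))$. Combining \eqref{lapeq} with Proposition \ref{lapmean} and \eqref{laplacemodel}, the polar expression of the Laplacian gives
\begin{equation*}
\Delta^M \bar\psi^\omega(r,\bar\theta) = -\lambda_1(\brm_R^\omega)\,\bar\psi^\omega(r) + (n-1)\bigl(\hrm_{\srm_r^M}-\hrm_{\srm_r^\omega}\bigr)\,\psi^\omega{'}(r).
\end{equation*}
Since $\psi^\omega{'}(r)<0$ on $(0,R]$ and the standing hypothesis fixes the sign of $\hrm_{\srm_r^M}-\hrm_{\srm_r^\omega}$, the correction term has a definite sign throughout $\brm_R^M$. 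Pairing the resulting differential inequality with the first Dirichlet eigenfunction of $\brm_R^M$ and applying Green's identity in the style of Barta's lemma, the assumed equality $\lambda_1(\brm_R^M)=\lambda_1(\brm_R^\omega)$ forces the correction term to vanish pointwise; since $\psi^\omega{'}(r)<0$, this yields $\hrm_{\srm_r^\omega}=\hrm_{\srm_r^M}$ for every $r\in(0,R]$.

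From this pointwise equality of mean curvatures, Proposition \ref{prop3.2} gives $\bu_{1,r}^{\omega}=u_{1,r}$ on $\brm_r^M$ for each $r\in(0,R]$; Corollary \ref{cor:MeanComp2} then delivers the volume assertions in (2), Theorem \ref{teo:TowMomComp}(3)--(4) produces the Poisson-hierarchy equality (1), and (3) follows from the integration argument already used inside Corollary \ref{isoptors}. The main obstacle is the quantitative step converting the averaged moment inequality into the eigenvalue comparison: the BGJ limit formula by itself is insensitive to multiplicative constants, so one really needs the sharp growth rate $\CA_k(D)\sim c(D)\,k!\,\lambda_1(D)^{-k}$ from the eigenfunction expansion of the Poisson hierarchy in order to turn a $k$-independent moment comparison into a strict ordering of the first Dirichlet eigenvalues.
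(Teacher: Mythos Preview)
Your proposal is correct, and the overall structure---moment comparison followed by rigidity---matches the paper, but there are two genuine differences worth noting.

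For the inequality, the paper does \emph{not} pass through the asymptotic $\CA_k(D)\sim c(D)\,k!\,\lambda_1(D)^{-k}$; instead it invokes the McDonald--Meyers formula
\[
\lambda_1(D)=\sup\Bigl\{\eta\geq 0:\ \limsup_{k\to\infty}\bigl(\tfrac{\eta}{2}\bigr)^{k}\tfrac{\CA_k(D)}{\Gamma(k+1)}<\infty\Bigr\},
\]
under which a $k$-independent multiplicative bound $\CA_k(\brm_R^M)\leq C\,\CA_k(\brm_R^\omega)$ translates immediately into a set inclusion of admissible $\eta$'s and hence into $\lambda_1(\brm_R^\omega)\leq\lambda_1(\brm_R^M)$. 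This is the same content as your asymptotic (the $\sup$ formula is precisely the growth rate you invoke), but it avoids having to justify the sharp asymptotic separately; you may want to cite the $\sup$ characterization directly rather than the ratio formula, since, as you correctly observe, the BGJ limit $\lambda_1=\lim_k k\CA_{k-1}/\CA_k$ alone is blind to multiplicative constants.

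For the equality case your route is genuinely different. The paper stays entirely inside the moment-spectrum framework: it reads off equality in the volume comparison~(\ref{meyer1}) from equality in the chain~(\ref{meyer2}), then appeals to the equality clause of Corollary~\ref{cor:MeanComp} to recover $\hrm_{\srm_r^\omega}=\hrm_{\srm_r^M}$. Your argument instead transplants the first eigenfunction $\psi^\omega$ and pairs the identity $\Delta^M\bar\psi^\omega+\lambda_1(\brm_R^\omega)\bar\psi^\omega=(n-1)(\hrm_{\srm_r^M}-\hrm_{\srm_r^\omega})\psi^\omega{'}$ against the first eigenfunction of $\brm_R^M$ via Green's formula, forcing the signed integrand to vanish. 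This is the Bessa--Montenegro mechanism of~\cite{BM} and it yields $\hrm_{\srm_r^\omega}=\hrm_{\srm_r^M}$ more directly, without routing through the volume equalities first; the downstream consequences (1)--(3) then follow from Proposition~\ref{prop3.2}, Theorem~\ref{teo:TowMomComp} and Corollary~\ref{cor:MeanComp2} exactly as you say. Both arguments are valid; yours has the advantage that the same transplantation computation already gives the inequality (via Barta), so you could in fact bypass the moment comparison entirely for this theorem.
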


\begin{proof}

The proof  follows the lines of the proof of Theorems 6 and 7 in \cite{HuMP3}. This technique is based in the the  description of the first Dirichlet eigenvalue of a smooth precompact domain $D$ in a Riemannian manifold
given by P. McDonald and R. Meyers in \cite{McMe}. 

When $D=B^M_R$, we have
\begin{equation}\label{desc}
\lambda_1(B^M_R)= \sup \left\{\eta \geq 0 \,:\, \lim_{k \to \infty}
\sup
\left(\frac{\eta}{2}\right)^k\frac{\mathcal{A}_{k}(B^M_R)}{\Gamma(k+1)}
<\infty\right\} \quad .
\end{equation}

Let us assume first that $\hrm_{\srm_r^\omega}\leq\,\hrm_{\srm_r^M},\,\,\text{for all }0<r\leq R$. Then, we have, by Corollary \ref{isoptors}, that

\begin{equation}\label{ineqspec2}
\frac{\mathcal{A}_{k}(B^M_R)}{\Vol(S^M_R)} \leq
\frac{\mathcal{A}_{k}(B^\omega_{R})}{\Vol(S^\omega_{R})}\,\,\,\,\textrm{for all}\,\, k \in \ene.
\end{equation}

On the other hand, by Corollary \ref{cor:MeanComp}:

\begin{equation}\label{meyer1}
\frac{\Vol(S^M_R)}{\Vol(S^\omega_{R})} \geq \frac{\Vol(B^M_R)}{\Vol(B^\omega_{R})}\geq 1.
\end{equation}

Then, using inequality (\ref{ineqspec2}) the set $$\mathcal{F}_2:=\{\eta \geq 0 \,:\, \lim_{k \to \infty}  \sup \left(\frac{\eta}{2}\right)^k\frac{\mathcal{A}_{k}(B^\omega_{R})}{\Gamma(k+1)} \frac{\Vol(S^M_R)}{\Vol(S^\omega_{R})}<\infty\}$$  is included in the set $$\mathcal{F}_1:= \{\eta \geq 0 \,:\, \lim_{k \to \infty} \sup \left(\frac{\eta}{2}\right)^k\frac{\mathcal{A}_{k}(B^M_R)}{\Gamma(k+1)} <\infty\}\quad ,$$

\noindent so we have, using this last observation and  inequality (\ref{meyer1}),

\begin{equation}\label{meyer2}
\begin{aligned}
\lambda_1(B^M_R) & = \sup \{\eta \geq 0 \,:\, \lim_{k \to \infty}  \sup \left(\frac{\eta}{2}\right)^k\frac{\mathcal{A}_{k}(B^M_R)}{\Gamma(k+1)} <\infty\} \\ 
& \geq
\sup \{\eta \geq 0 \,:\, \lim_{k \to \infty}  \sup \left(\frac{\eta}{2}\right)^k\frac{\mathcal{A}_{k}(B^\omega_{R})}{\Gamma(k+1)} \frac{\Vol(S^M_R)}{\Vol(S^\omega_{R})}<\infty\} \\ 
& = 
\frac{\Vol(S^M_R)}{\Vol(S^\omega_{R})}\sup \{\eta \geq 0 \,:\, \lim_{k \to \infty} \sup \left(\frac{\eta}{2}\right)^k\frac{\mathcal{A}_{k}(B^\omega_{R})}{\Gamma(k+1)} <\infty\}\\
& =
\frac{\Vol(S^M_R)}{\Vol(S^\omega_{R})} \lambda_1(B^\omega_{R})\geq \lambda_1(B^\omega_{R}).
\end{aligned}
\end{equation}

If we assume $\hrm_{\srm_r^\omega}\geq\,\hrm_{\srm_r^M},\,\text{for all }0<r\leq R$, then we obtain $\lambda_1 (B^\omega_R) \geq \lambda_1(B^M_R)$ with the same argument, inverting all the inequalities.

Finally, equality $\lambda_1 (B^\omega_{R})=\lambda_1(B^M_{R})$ implies that all the inequalities in (\ref{meyer2}) are equalities, so  we have the equality in the inequality (\ref{meyer1}), (namely, the equality in the isoperimetric inequality (\ref{cor:MeanComp1}) in Corollary \ref{cor:MeanComp}), and moreover the equality between the volumes $  \Vol( B^\omega_{R})=\Vol(B^M_{R})$ and $\Vol\left(\srm_R^\omega\right)=\Vol\left(\srm_R^M\right)$. Hence, we have, by Corollary  \ref{cor:MeanComp}, the equalities 
 $$\hrm_{\srm_r^\omega}=\,\hrm_{\srm_r^M}\,\,\,\text{for all }0<r\leq R$$	
 
 \noindent and, in its turn, equalities $ \Vol( B^\omega_{r}) =\Vol(B^M_{r})$ and $\Vol\left(\srm_r^\omega\right)=\Vol\left(\srm_r^M\right)\,\,\forall r \in ]0,R]$. Assertions (1) and (2) follows from Proposition \ref{prop3.2} and Theorem \ref{teo:TowMomComp}.
\end{proof}

We finish the paper with a consequence of Theorems \ref{th_const_below2} and \ref{teo:TowMomComp} which summarizes the relation between the first Dirichlet eigenvalue, the $L^1$-moment spectrum and the Poisson hierarchy of the geodesic balls $B^M_R(o)$ of a Riemannian manifold which satisfies our restriction on the mean curvatures of the geodesic spheres included in it, $S^M_r(o)$, $r \leq R$.

\begin{corollary}\label{th_const_below3}
Let $(M^n,g)$ be a complete Riemannian manifold and let $(\MW,g_\omega)$ be a rotationally symmetric model space with center $o_\omega \in \MW$. Let $o \in M$ be a point in $M$ and let us suppose that  $inj(o) \leq inj(o_\omega)$. Let us consider a metric ball $B^M_R(o)$, with $R < inj(o) \leq inj(o_w)$. Let us suppose moreover that the mean curvatures of the geodesic spheres in $M$ and $M_{\omega}$ satisfies

	\begin{equation}\label{eq:meancurvatureconditions13}
		\hrm_{\srm_r^\omega}\leq\,(\geq)\, \hrm_{\srm_r^M}\quad\text{for all}\quad 0 < r \leq R.
	\end{equation}
	
	Then, the following equalities are equivalent:
  
  \begin{enumerate}
  \item \label{cor:5.2.1} $ \lambda_1 (B^\omega_R)=\lambda_1(B^M_R)$.
  \item \label{cor:5.2.2} $\CA_k(B^\omega_R)=\CA_k(B^M_R)\,\,\forall k \geq 1$.
  \item \label{cor:5.2.3} $\bu^w_{k,R}=u_{k,R} \,\,\forall k \geq 1$ in $B^M_R$.
  \end{enumerate}
  
  Moreover, equality $\hrm_{\srm_r^\omega}= \hrm_{\srm_r^M}\,\,\, \text{for all}\, \,\,0 < r \leq R$ implies any, (and hence, all), of the equalities $(1)$, $(2)$ and $(3)$.
 \end{corollary}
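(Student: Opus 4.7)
The plan is to prove the equivalence $(1) \Leftrightarrow (2) \Leftrightarrow (3)$ by the cycle $(1) \Rightarrow (3) \Rightarrow (2) \Rightarrow (1)$, and then deduce the concluding remark as a direct by-product of the same arguments.

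The implication $(1) \Rightarrow (3)$ is where essentially all of the content lies. Theorem \ref{th_const_below2} already shows that, under hypothesis (\ref{eq:meancurvatureconditions13}), the equality $\lambda_1(B^\omega_R) = \lambda_1(B^M_R)$ forces $\hrm_{\srm_r^\omega} = \hrm_{\srm_r^M}$ for every $0 < r \leq R$. From this, Proposition \ref{prop3.2} yields the base case $\E_R^\omega = E_R^M$, i.e.\ $\bar u_{1,R}^\omega = u_{1,R}$ on $B^M_R$. An induction on $k$, modelled verbatim on the last inductive step in the proof of Theorem \ref{teo:TowMomComp}(3), then propagates the equality: once $\bar u_{k,R}^\omega = u_{k,R}$, the polar expression of the Laplacian together with equality of mean curvatures gives $\Delta^M \bar u_{k+1,R}^\omega = -(k+1)\bar u_{k,R}^\omega = -(k+1) u_{k,R} = \Delta^M u_{k+1,R}$ on $B^M_R$, and the strong maximum principle with vanishing boundary data yields $\bar u_{k+1,R}^\omega = u_{k+1,R}$.

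For $(3) \Rightarrow (2)$ the key observation is that the case $k=1$ of (3) gives $\E_R^\omega = E_R^M$, so Corollary \ref{cor:MeanComp2} supplies the volume equalities $\Vol(\srm_r^M) = \Vol(\srm_r^\omega)$ for all $0 < r \leq R$. Since $\bar u_{k,R}^\omega$ is radial, the coarea formula gives
\[
\CA_k(B^M_R) = \int_0^R u_{k,R}^\omega(r)\,\Vol(\srm_r^M)\,dr = \int_0^R u_{k,R}^\omega(r)\,\Vol(\srm_r^\omega)\,dr = \CA_k(B^\omega_R).
\]
For $(2) \Rightarrow (1)$ one simply applies formula (\ref{DirMoment2}) to both domains and observes that coinciding moment sequences produce coinciding limits, hence coinciding first Dirichlet eigenvalues.

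Finally, the second part of the corollary (that $\hrm_{\srm_r^\omega} = \hrm_{\srm_r^M}$ for all $0 < r \leq R$ implies any, and hence all, of (1), (2), (3)) is already embedded in the inductive argument used for $(1) \Rightarrow (3)$: starting from mean curvature equality directly, Proposition \ref{prop3.2} and the same induction produce (3), and the established cycle then yields (2) and (1). The main (and only non-cosmetic) obstacle in this plan is the step $(1) \Rightarrow (3)$, which requires extracting mean curvature equality from eigenvalue equality; but that rigidity is precisely the content of Theorem \ref{th_const_below2}, so once it is invoked the remaining implications reduce to the coarea formula and the moment-limit formula for $\lambda_1$.
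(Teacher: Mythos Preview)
Your proposal is correct and follows essentially the same approach as the paper: both extract mean curvature equality from eigenvalue equality via Theorem \ref{th_const_below2}, propagate this through Proposition \ref{prop3.2} and the induction from Theorem \ref{teo:TowMomComp}(3) to obtain (3), and close the cycle via the moment-limit formula for $\lambda_1$. The only cosmetic difference is that for $(3)\Rightarrow(2)$ you integrate the radial functions directly against $\Vol(\srm_r^M)$ via the coarea formula, whereas the paper reaches the same conclusion by a divergence-theorem computation as in Corollary \ref{isoptors}; both routes rely on the volume equalities from Corollary \ref{cor:MeanComp2} and are equivalent.
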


\begin{proof}


We are going to prove these equivalences. We first assume that  
$$\hrm_{\srm_r^\omega}\leq \hrm_{\srm_r^M}\quad\text{for all}\quad 0 < r \leq R.$$

We see first that equality \eqref{cor:5.2.1} implies equalities \eqref{cor:5.2.3}, namely, that the first Dirichlet eigenvalue of  the geodesic ball $B^M_R$ determines its Poisson hierarchy. To do that, we start with the last observation in Theorem \ref{th_const_below2}, namely, that equality $\lambda_1(B^\omega_{R}) = \lambda_1 (B^M_{R})$ implies that all the inequalities in (\ref{meyer2}) are equalities, so  we have the equality in the inequality (\ref{meyer1}), (namely, the equality in the isoperimetric inequality (\ref{cor:MeanComp1}) in Corollary \ref{cor:MeanComp}), and moreover the equality between the volumes $ \Vol(B^\omega_{r}) = \Vol( B^M_{r})$ and $\Vol\left(\srm_r^\omega\right)=\Vol\left(\srm_r^M\right)$ for all $r \in [0,R]$. Hence, we have, by Corollary  \ref{cor:MeanComp}, the equalities 
 $$\hrm_{\srm_r^\omega}=\,\hrm_{\srm_r^M}\,\,\,\text{for all }0<r\leq R.$$	

Then, by Proposition \ref{prop3.2}, we have that $\bu^w_{1,R}=u_{1,R}$ on $B^M_R$. Hence we conclude equality $\bu_{k,R}^{\omega}=u_{k,R}$ on $B^M_R(o)$ for all $k\geq 1$ using assertion (3) in Theorem \ref{teo:TowMomComp}. We have concluded that \eqref{cor:5.2.1} implies \eqref{cor:5.2.3}.

To see that equality \eqref{cor:5.2.1} implies equalities \eqref{cor:5.2.2}, we compute now as in Corollary \ref{isoptors}: for all $k \geq 1$, we have that, as $\Vol\left(\srm_r^\omega\right)=\Vol\left(\srm_r^M\right)$ for all $r \in [0,R]$,
	
	\begin{equation}
		\begin{split}
			\CA_k\left(\brm_R^M\right) & =\int_{\brm_R^M}u_{k,R}\,d\sigma=-\dfrac{1}{k+1}\int_{\brm_R^M}\Delta^M u_{k+1,R}d\sigma
			 \\&=\,-\dfrac{1}{k+1}\int_{\brm_R^M}\Delta^M\bu_{k+1,R}^{\omega}\,d\sigma= -\dfrac{1}{k+1}\,\bu_{k+1,R}^{\omega}{'}(R)\Vol\left(\srm_R^M\right)\\ &=\dfrac{\CA_k\left(\brm_R^\omega\right)}{\Vol\left(\srm_R^\omega\right)}\,\Vol\left(\srm_R^M\right)= \CA_k\left(\brm_R^\omega\right)
		\end{split}
	\end{equation}
	\noindent and hence we have equalities \eqref{cor:5.2.2}.
	
	To see that equalities \eqref{cor:5.2.2} implies equality \eqref{cor:5.2.1}, i.e., that the exit time moment spectrum of $B^M_R$ determines its first Dirichlet eigenvalue, we compute, using Theorem A in \cite{HuMP3} and as $\CA_k\left(\brm_R^M\right)= \CA_k\left(\brm_R^\omega\right)\,\forall k \geq 1$:
\begin{equation}\label{gim2}
\begin{aligned}
\lambda_1(B^M_R)&=\lim_{k \to \infty}\dfrac{k\CA_{k-1}\left(\brm_R^M\right)}{\CA_{k}\left(\brm_R^M\right)}\\&=\lim_{k \to \infty}\dfrac{k\CA_{k-1}\left(\brm_R^\omega\right)}{\CA_{k}\left(\brm_R^\omega\right)}=\lambda_1(B^\omega_R).
\end{aligned}
\end{equation}

To see that equalities \eqref{cor:5.2.3} implies equality \eqref{cor:5.2.1}, namely, that the Poisson hierarchy of the ball $B^M_R$ determines its first Dirichlet eigenvalue, we will see first that equalities \eqref{cor:5.2.3} implies equalities \eqref{cor:5.2.2}. Assuming that \eqref{cor:5.2.3} is satisfied, we have that $\bu_{k,R}^{\omega}=u_{k,R}$ on $B^M_R(o)$ for all $k\geq 1$. In particular, $\bu_{1,R}^{\omega}=u_{1,R}$ on $B^M_R(o)$, so, by Corollary \ref{cor:MeanComp2}, 
	$\Vol\left(\srm_r^\omega\right)=\Vol\left(\srm_r^M\right)$ and $\Vol\left(\brm_r^\omega\right)=\Vol\left(\brm_r^M\right)$ for all $r \in ]0, R]$ and, hence, given $r \in ]0,R]$, and for all $k \geq 1$,
	
	\begin{equation}\label{ineqisopTors2}
		\begin{split}
			\CA_k\left(\brm_R^M\right) & =\int_{\brm_R^M}u_{k,R}\,d\sigma=-\dfrac{1}{k+1}\int_{\brm_R^M}\Delta^M u_{k+1,R}d\sigma
			 \\&=\,-\dfrac{1}{k+1}\int_{\brm_R^M}\Delta^M\bu_{k+1,R}^{\omega}\,d\sigma= -\dfrac{1}{k+1}\,\bu_{k+1,R}^{\omega}{'}(R)\Vol\left(\srm_R^M\right)\\ &=\dfrac{\CA_k\left(\brm_R^\omega\right)}{\Vol\left(\srm_R^\omega\right)}\,\Vol\left(\srm_R^M\right)= \CA_k\left(\brm_R^\omega\right)
		\end{split}
	\end{equation}
	
	\noindent so we have inequalities \eqref{cor:5.2.2}. Now, we use equation (\ref{gim2}) to obtain \eqref{cor:5.2.1}.
	
\end{proof}


\end{document}